\documentclass[11pt]{article}
\usepackage[margin=1in]{geometry}
\usepackage[T1]{fontenc}
\usepackage{lmodern}
\usepackage{amsmath,amssymb,amsthm,mathtools}
\numberwithin{equation}{section}
\usepackage{mathrsfs,microtype,booktabs}
\usepackage[hidelinks]{hyperref}
\hypersetup{pdfauthor={Jingchen Hu and Li Sheng},pdftitle={Interior C2 Estimates and a Liouville Theorem for the Complex Monge--Amp\`ere Equation},pdfsubject={A maximum-principle proof for Bedford--Taylor solutions}}
\allowdisplaybreaks[2]
\newtheorem{theorem}{Theorem}[section]
\newtheorem{proposition}[theorem]{Proposition}
\newtheorem{lemma}[theorem]{Lemma}
\newtheorem{corollary}[theorem]{Corollary}
\makeatletter
\g@addto@macro\th@remark{\thm@headfont{\bfseries}}
\makeatother
\theoremstyle{remark}
\newtheorem{remark}[theorem]{Remark}
\newcommand{\C}{\mathbb C}
\newcommand{\R}{\mathbb R}
\newcommand{\Id}{I}
\newcommand{\tr}{\operatorname{tr}}
\newcommand{\supp}{\operatorname{supp}}

\newcommand{\sphereavg}{\mathcal S}
\newcommand{\ballavg}{\mathcal B}
\newcommand{\cl}[1]{\overline{#1}}
\title{Interior $C^2$ Estimates and\\a Liouville Theorem for the\\Complex Monge--Amp\`ere Equation}
\author{Jingchen Hu \and Li Sheng}
\date{October 20, 2026}

\begin{document}
\maketitle

\begin{abstract}
We establish an interior $C^2$ estimate for
locally bounded plurisubharmonic Bedford--Taylor solutions of the
complex Monge--Amp\`ere equation $\det(u_{i\bar j})=f$
under weak quadratic growth bounds on sufficiently large balls,
where $f$ is positive and Dini-continuous. We assume a distributional
lower bound for the Laplacian of $f^{1/n}$. The main ingredient is a pseudo-$C^2$ estimate
using the Green-function averaging error, valid also when $f$ is nonnegative.
As an application, we prove that every entire
Bedford--Taylor solution of $\det(u_{i\bar j})=1$ with weak
quadratic growth is a real quadratic polynomial, resolving
a conjecture of G\'abor Sz\'ekelyhidi.
\end{abstract}

\section{Introduction}
\label{sec:introduction}

Interior derivative estimates are a basic tool for proving Liouville
theorems for entire solutions of elliptic equations. The harmonic
equation provides a simple model. For a positive harmonic function
$h$ on $B_{2R}\subset\R^d$, the logarithmic gradient estimate gives
\begin{equation}\label{eq:intro-harmonic-gradient}
\sup_{B_R}|D\log h|\le\frac{C_d}{R};
\end{equation}
see Cheng and Yau \cite{ChengYau} for its Riemannian generalization.
For a nonnegative entire harmonic function $h$, apply
\eqref{eq:intro-harmonic-gradient} to $h+\varepsilon$, where
$\varepsilon>0$. Letting $R\to\infty$ gives $Dh=0$, so $h$ is constant.

For nonlinear equations, the essential issue in proving a
Liouville theorem is to obtain interior second-derivative estimates
that remain controlled after rescaling.

\subsection{Real Monge--Amp\`ere and Hessian equations}
\label{sec:real-history}

For the real Monge--Amp\`ere equation $\det D^2v=f>0$, Pogorelov's
interior estimate \cite{PogorelovEstimate} provides an estimate for the
Hessian. In the constant-density case, a standard form for a smooth strictly convex
solution with $v<0$ in a bounded smooth convex domain and $v=0$ on its boundary is
\[
(-v)\|D^2v\|\le C,
\qquad \det D^2v=1,
\]
where $C$ depends on the dimension and bounds for $v$ and $Dv$; see
Trudinger and Wang \cite[Lemma~3.6]{TrudingerWangSurvey}.
The section geometry and the affine boundary normalization are part
of this estimate. Pogorelov's singular examples in dimensions $d\ge3$
show why one cannot replace them by an unrestricted estimate based
only on the amplitude of a convex solution on an arbitrary ball
\cite[Section~3]{TrudingerWangSurvey}.

The associated entire-solution theorem is the
J\"orgens--Calabi--Pogorelov theorem: every smooth convex solution of
$\det D^2v=1$ on $\R^d$ is a quadratic polynomial, without a growth
assumption. J\"orgens \cite{Jorgens} proved the two-dimensional case,
Calabi \cite{Calabi} treated dimensions at most five, and
Pogorelov \cite{PogorelovRigidity} proved the general case.
The role of normalized sections and interior estimates in this
rigidity theorem is reviewed in
\cite[Theorem~4.4]{TrudingerWangSurvey}.

The $k$-Hessian equations connect the Laplace and
Monge--Amp\`ere equations:
\[
\sigma_k(D^2v)=f,
\qquad
\sigma_k(\lambda)=\sum_{1\le i_1<\cdots<i_k\le d}
\lambda_{i_1}\cdots\lambda_{i_k},
\qquad 1\le k\le d.
\]
The elliptic branch is the admissible cone
$\Gamma_k=\{\lambda:\sigma_j(\lambda)>0,\ 1\le j\le k\}$.
A smooth function is called $k$-admissible, or $k$-convex, when the
eigenvalues of its real Hessian lie in $\Gamma_k$.
Caffarelli, Nirenberg, and Spruck \cite{CNSHessian} developed the
classical Dirichlet theory for these and more general concave
functions of the Hessian eigenvalues. Chou and Wang
\cite[Theorem~4.1]{ChouWang} obtained weighted interior Hessian
estimates using an admissible comparison function above the solution.
Sheng, Urbas, and Wang \cite{ShengUrbasWang} extended the weighted
interior-estimate method to classes of curvature and curvature-quotient
equations. For $2\le k\le d-1$, and $(k+1)$-convex solutions of
$\sigma_k(D^2v)=f(x,v,Dv)>0$
with zero boundary values, Li, Ren, and Wang
\cite{LiRenWang} proved the stronger weight-one estimate
$(-v)\Delta v\le C$.

These estimates lead to Liouville theorems under positive quadratic
lower growth,
\[
v(x)\ge a|x|^2-b\quad\text{for large }|x|,
\qquad a>0.
\]
Bao, Chen, Guan, and Ji \cite[Theorem~3.2]{BaoChenGuanJi} proved
quadraticity for smooth convex entire solutions of
$\sigma_k(D^2v)=1$ with this growth. Li, Ren, and Wang
\cite{LiRenWang} obtained the corresponding result under
$(k+1)$-convexity, for $2\le k\le d-1$.
For the larger class of $k$-admissible solutions,
Wang and Bao \cite{WangBao} assumed two-sided positive quadratic
growth. Chu and Dinew \cite{ChuDinew} developed a common framework
for several Hessian operators; for $\sigma_k$, their result applies
under positive quadratic lower growth and
$\sigma_{k+1}(D^2v)\ge-A$. Zhang \cite{ZhangHessian} established
weighted estimates and rigidity for $3\le k\le d-1$ under semiconvexity
$D^2v\ge-K\Id_d$ and positive quadratic lower growth.
Tu \cite{TuHessian} obtained weighted estimates and rigidity from
positive quadratic lower growth for the $(d-1)$-Hessian equation
without semiconvexity, for $d\ge3$.
In a recent preprint, B.~Wang \cite{BinWang} extends this
lower-growth rigidity to $k$-admissible solutions with $2k>d$,
for $2\le k\le d-1$.

The quadratic Hessian equation $\sigma_2(D^2v)=f$ has a more extensive
interior theory. Warren and Yuan \cite{WarrenYuan} proved interior
Hessian estimates for $f=1$ in $\R^3$, without convexity.
Qiu \cite{QiuHessian} treated positive smooth variable right-hand
sides in dimension three. Guan and Qiu \cite{GuanQiu} proved
estimates in all dimensions for convex solutions, and more generally
under $\Gamma_2$-admissibility and $\sigma_3(D^2v)\ge-A$.
For $f\equiv1$, Shankar and Yuan proved estimates for semiconvex solutions
\cite{ShankarYuanEstimate}, and later treated dimension four without
convexity \cite{ShankarYuanFour}; their higher-dimensional estimates
in the latter paper require an additional eigenvalue condition.

For the corresponding Liouville problem, Chang and Yuan
\cite{ChangYuan} proved quadraticity for entire solutions of
$\sigma_2(D^2v)=1$ satisfying
\[
D^2v\ge\left(\delta-\sqrt{\frac{2}{d(d-1)}}\right)\Id_d,
\qquad \delta>0.
\]
In particular, their theorem includes convex solutions without
growth assumptions. Shankar and Yuan \cite{ShankarYuan} later
proved the same conclusion under arbitrary fixed semiconvexity
$D^2v\ge-K\Id_d$, again without growth assumptions.
Under positive quadratic lower growth, Chen and Xiang
\cite[Theorems~1.3--1.4]{ChenXiang} proved rigidity for
$\Gamma_2$-admissible entire solutions in $\R^3$, and in all
dimensions with the additional assumption $\sigma_3(D^2v)\ge-A$.

Several recent preprints advance this theory. Li and Wu
\cite{LiWu} obtain interior Hessian estimates for
$\sigma_2(D^2v)=1$ in every dimension without convexity.
For positive H\"older continuous right-hand sides, Zhou and Zhu
\cite{ZhouZhuRegularity} obtain interior $C^{2,\alpha}$ regularity
for convex viscosity solutions, while Chen, Zhou, and Zhu
\cite{ChenZhouZhu} treat the full $\Gamma_2$-admissible branch.
For higher Hessian equations, Zhou and Zhu
\cite[Theorem~1.1]{ZhouZhuRigidity} prove in a recent preprint that
every $C^2$ convex entire solution of $\sigma_k(D^2v)=c_0>0$ is
quadratic for $3\le k\le d-1$, without a growth assumption.
Convexity or another structural hypothesis remains significant:
Warren \cite{WarrenNonpolynomial} constructed nonpolynomial entire
admissible solutions when $k\ge2$ and $d\ge2k-1$.

\subsection{Complex equations and earlier Liouville theorems}
\label{sec:complex-history}

For the complex Monge--Amp\`ere equation
$\det(u_{i\bar j})=f$, the prescribed right-hand side $f$ is called
the \emph{density}. Bedford and Taylor \cite{BTDirichlet,BT}
developed the weak theory for locally bounded plurisubharmonic
potentials. Caffarelli, Kohn, Nirenberg, and Spruck \cite{CKNS}
established smooth Dirichlet solvability on smooth strictly
pseudoconvex domains with smooth positive density and smooth
boundary values. On bounded real convex domains, B\l ocki
\cite{BlockiConvex} proved interior smoothness for the zero-boundary
Dirichlet problem under suitable positive-density assumptions.
These boundary and geometric hypotheses are essential to the scope
of the cited estimates.

For general plurisubharmonic solutions, smooth positive density
alone does not imply interior $C^2$ regularity; complex analogues
of Pogorelov's examples demonstrate this obstruction
\cite{BD}. He \cite{HeRegularity} obtained interior Laplacian
estimates from $L^p$ control of the Laplacian for $p>n^2$ and suitable
positive data. B\l ocki and Dinew \cite{BD} proved local regularity
for $W^{2,p}$ solutions in complex dimension $n\ge2$, with
$p>n(n-1)$ and positive $C^{1,1}$
density. Once second derivatives are controlled, Dinew, Zhang,
and Zhang \cite{DinewZhangZhang} obtain $C^{2,\alpha}$ regularity
from a $C^{1,1}$ potential and positive H\"older density.
Wang \cite{Wang} obtains H\"older estimates for real second
derivatives from a bound for the complex Laplacian and corresponding
positive H\"older data.

Interior estimates and Liouville theorems also interact in the
complex Hessian equations. Hou, Ma, and Wu \cite{HouMaWu}
proved a second-order estimate controlled by the square of the gradient on compact
K\"ahler manifolds. Dinew and Ko\l odziej \cite{DinewKolodziejLocal}
proved local Laplacian estimates under $W^{2,p}$ control with
$p>n(m-1)$ and suitable positive data.
Their Liouville theorem \cite{DinewKolodziejLiouville} states that
bounded entire maximal $m$-subharmonic functions with bounded
gradient are constant; it supplies the blow-up argument for global
gradient estimates. This is a homogeneous complex-Hessian result,
as distinct from quadratic rigidity for an equation with positive
constant right-hand side. Sz\'ekelyhidi \cite{SzekelyhidiHermitian}
established estimates for a broad class of concave elliptic equations
on compact Hermitian manifolds using a $\mathcal C$-subsolution.

For the constant-density complex Monge--Amp\`ere equation,
Riebesehl and Schulz \cite{RiebesehlSchulz} proved constancy of the
complex Hessian under uniform equivalence of the associated K\"ahler
metric with the Euclidean metric. Constancy of the complex Hessian
does not by itself make the potential quadratic, since adding the
real part of a holomorphic function leaves $u_{i\bar j}$ unchanged.
Wang \cite{WangLiouville} proved quadratic rigidity for entire
Bedford--Taylor solutions asymptotic to a prescribed positive
quadratic solution: for the standard model,
$u(z)=|z|^2+o(|z|^2)$ implies that $u-|z|^2$ is affine.

Li and Sheng \cite[Theorem~1.2]{LiSheng} proved constancy of the
complex Hessian under completeness and a positive quadratic lower
growth bound, without an upper growth assumption. This also implies
quadraticity of the potential: subtracting the constant Hermitian
quadratic part leaves a harmonic function bounded below by a negative
quadratic polynomial, to which the argument in
Section~\ref{sec:harmonic-remainder} applies. Although \cite[Corollary~1.3]{LiSheng}
states quadraticity under two-sided quadratic growth, the upper bound
is unnecessary for this consequence.
Bing-Long Chen
\cite{BLChen} obtained a Liouville theorem under completeness and
a weaker lower growth condition involving powers of $n$ totally
real linear functions, with exponent $p>2n/(n+1)$.
In a recent preprint, Chen, Hu, and Sheng \cite{CHSPogorelov}
obtained weighted interior second-derivative estimates under a
matrix condition relating $(u_{ij})$ and $(u_{i\bar j})$.
Their estimates imply quadraticity for smooth real convex entire
solutions, without growth or completeness assumptions; convexity
outside a compact set also suffices.
Chu and Liu \cite[Theorem~1.2]{ChuLiuQuadratic} proved quadraticity
for smooth entire plurisubharmonic solutions of
$\det(u_{i\bar j})=1$ under two-sided positive quadratic growth,
without convexity or completeness assumptions.
Their proof combines quantitative estimates for twisted envelopes
with Dirichlet approximations to obtain a global Laplacian bound
\cite[Sections~3--4]{ChuLiuQuadratic}.

Related estimates for the homogeneous complex Monge--Amp\`ere
equation concern metric lower bounds \cite{HuMetric}, preservation
of convexity \cite{HuConvex}, and the rank of the complex Hessian
\cite{HuRank}, under the geometric and boundary assumptions of those
problems. For determinant one in complex dimension two, Chen, Hu,
and Sheng \cite{CHSPower} obtain strict convexity of
$-\sqrt{-u}$ on suitable convex domains with $u=0$ on the boundary.

\subsection{The main results}
\label{sec:main-results}

We prove an interior estimate for the real Hessian of a
plurisubharmonic solution under weak quadratic growth bounds
on a sufficiently large ball, for positive Dini-continuous density
with $\Delta_\R(f^{1/n})\ge-K$ in distributions; see
Theorem~\ref{thm:real-c2}. The main application strengthens
the quadratic-growth rigidity conjecture of G\'abor Sz\'ekelyhidi: weak
quadratic growth forces entire solutions of
\begin{equation}\label{eq:intro-equation}
\det(u_{i\bar j})=1\qquad\text{on }\C^n
\end{equation}
to be quadratic; see the original AIM workshop problem list
\cite[Problem~1.4]{AIM2018}. Our result applies to locally bounded
Bedford--Taylor solutions, without convexity or completeness
assumptions. In comparison with Wang's asymptotic theorem
\cite{WangLiouville}, the growth bounds here permit logarithmic changes
on both sides and do not prescribe a quadratic model
in advance.

For a smooth plurisubharmonic solution, the complex Hessian is
nonnegative and its determinant equals $f$. A bound for its trace
therefore bounds each eigenvalue from above; if $f$ has a positive
lower bound, their product also bounds each eigenvalue from below.
These bounds make the linearized equation uniformly elliptic.
Our argument obtains the initial trace bound by controlling the
difference between a potential and its Green-function average.
The contribution from the distant boundary tends to zero as the
outer radius tends to infinity.

Here are our derivative and matrix conventions. Write $z_j=x_j+i y_j$ and
\[
\partial_{z_j}=\tfrac12(\partial_{x_j}-i\partial_{y_j}),\qquad
\partial_{\bar z_j}=\tfrac12(\partial_{x_j}+i\partial_{y_j}).
\]
For a twice continuously differentiable real function $\varphi$, its
\emph{complex Hessian} is
\[
D^2_{\C}\varphi=(\varphi_{i\bar j})_{i,j=1}^n,
\qquad \varphi_{i\bar j}=\partial_{z_i}\partial_{\bar z_j}\varphi.
\]
We also write $\varphi_i=\partial_{z_i}\varphi$, $\varphi_{\bar j}=\partial_{\bar z_j}\varphi$
and $\varphi_{ij}=\partial_{z_i}\partial_{z_j}\varphi$. The real Hessian
$D^2_{\R}\varphi$ uses the coordinate order
$(x_1,\ldots,x_n,y_1,\ldots,y_n)$.
Here $\tr$ denotes the sum of the diagonal entries. Since
$\varphi_{j\bar j}=\tfrac14(\varphi_{x_jx_j}+\varphi_{y_jy_j})$
by the derivative conventions above, the real Laplacian satisfies
\[
\Delta_\R\varphi=\tr D^2_\R\varphi
=\sum_{j=1}^n(\varphi_{x_jx_j}+\varphi_{y_jy_j})
=4\tr D^2_\C\varphi.
\]
Thus the trace of the complex Hessian, $\tr D^2_\C\varphi$,
is one quarter of the real Laplacian, $\Delta_\R\varphi$.

For a Hermitian matrix $H$, we use
$H[v]=\sum_{i,j}H_{i\bar j}v_i\bar v_j$.
The notation $H\ge0$ means that this quantity is nonnegative for every
$v$; $H>0$ means that it is positive for every nonzero $v$.
Thus $H\le H'$ means $H'-H\ge0$. Real symmetric matrix inequalities
are defined using $v^THv$, where $T$ denotes transpose.
We write $\Id_d$ for the $d\times d$ identity matrix and $H^*$ for
conjugate transpose.

We specify the weak meaning of the equation and state the main
Liouville theorem, the $C^2$ estimate, and the pseudo-estimates
underlying them. The definitions below fix the
normalization of the Monge--Amp\`ere measure and the averaging
operator, so that the constants in these statements can be read
without further conventions.

A function is plurisubharmonic if it is upper semicontinuous and
subharmonic on every complex line. For $u\in C^2$, this is equivalent
to $D^2_{\C}u\ge0$; strict plurisubharmonicity means $D^2_{\C}u>0$.

To state the equation without assuming second derivatives, set
\[
\omega_0=\sqrt{-1}\partial\overline{\partial}|z|^2,\qquad \rho(z)=e+|z|^2,\qquad\psi(z)=1+\log\rho(z).
\]
For smooth $u$,
\[
\sqrt{-1}\partial\overline{\partial}u=\sqrt{-1}\sum_{i,j}u_{i\bar j}\,dz_i\wedge d\bar z_j,
\qquad (\sqrt{-1}\partial\overline{\partial}u)^n=\det D^2_{\C}u\,\omega_0^n.
\]
The measure $\omega_0^n$ is a fixed positive multiple of ordinary
volume on $\R^{2n}$.

For locally bounded plurisubharmonic $u$, Bedford--Taylor theory
defines the measure $(\sqrt{-1}\partial\overline{\partial}u)^n$ by decreasing smooth
plurisubharmonic approximation on smaller open sets. The approximating
measures converge weakly to a measure independent of the approximation
\cite{BT}. The weak equation
$(\sqrt{-1}\partial\overline{\partial}u)^n=f\,\omega_0^n$ is an equality of measures: $f$ is the
density of the Monge--Amp\`ere measure relative to the reference
volume measure $\omega_0^n$.

We identify $\C^n$ with $\R^{2n}$ and write
$B_t=\{h\in\C^n:|h|<t\}$ for the Euclidean ball centered at zero.

\begin{theorem}[A Liouville theorem with a weak quadratic growth condition]\label{thm:main}
Suppose that $u$ is a real, locally bounded plurisubharmonic function
on $\C^n$ satisfying
\begin{equation}\label{eq:BT-equation}
(\sqrt{-1}\partial\overline\partial u)^n=\omega_0^n
\quad\text{on }\C^n
\end{equation}
in the Bedford--Taylor sense. If, for some constants $0\le\alpha\le1$, $0<c\le1$, and $C\ge1$,
the solution $u$ satisfies the following growth condition
\begin{equation}\label{eq:growth}
\frac{c\rho}{\psi^\alpha}\le u\le C\rho\psi^{1-\alpha},
\qquad \rho=e+|z|^2,\quad\psi=1+\log\rho,
\end{equation}
then $u$ is a real quadratic polynomial.
\end{theorem}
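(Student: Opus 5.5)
Our plan is to derive Theorem~\ref{thm:main} from the interior $C^2$ estimate (Theorem~\ref{thm:real-c2}) applied on large balls, followed by a rescaling/blow-down argument and a harmonic-remainder step.

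\textbf{Step 1: a uniform bound on the complex Hessian.} Fix a point $z_0$ and a radius $R\gg|z_0|$. Consider the rescaled potentials
\[
u_R(h)=R^{-2}u(z_0+Rh),
\]
which satisfy $(\sqrt{-1}\partial\overline\partial u_R)^n=\omega_0^n$ on $B_{2}$. By \eqref{eq:growth}, $u_R$ is bounded above by $C\psi(R^2)^{1-\alpha}$ and below by $c\,\psi(R^2)^{-\alpha}$ up to constants; the two bounds differ by a logarithmic factor. We expect the pseudo-$C^2$ estimate to be the key tool here. It bounds $\tr D^2_\C u$ at a point through the Green-function averaging error $\mathcal G u-u$ on a ball of radius $r$, weighted by $r^{-2}$. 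Choose the averaging radius $r$ comparable to $|z|$, or to $R$. The upper growth bound then controls the average by $C\rho\psi^{1-\alpha}$, and the lower bound controls $-u$ at the center by $-c\rho\psi^{-\alpha}$.

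The weights $\psi^{1-\alpha}$ and $\psi^{-\alpha}$ are designed so that the product of the two logarithmic losses is balanced: the factors $\psi^{\alpha}$ and $\psi^{1-\alpha}$ multiply to $\psi$. I would therefore apply the estimate in a form with exponent matching, namely the weak quadratic growth hypothesis of Theorem~\ref{thm:real-c2}. From this I aim to get
\[
\sup_{B_1}\tr D^2_\C u_R\le C',
\]
uniformly in $R$, with the distant-boundary contribution tending to zero as the outer radius grows. Since $f\equiv1$ is constant, the hypothesis $\Delta_\R f^{1/n}\ge -K$ holds with $K=0$. Unscaling, $D^2_\C u$ is bounded on $B_{R/2}$, uniformly in $R$, and hence globally on $\C^n$.

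\textbf{Step 2: uniform ellipticity and higher regularity.} With $\det D^2_\C u=1$ and $\tr D^2_\C u\le C'$, the eigenvalues of $D^2_\C u$ lie in $[C'^{-(n-1)},C']$, so $C^{-1}\Id_n\le D^2_\C u\le C\Id_n$ on $\C^n$. The Bedford--Taylor solution is then $C^{1,1}$-type in the complex directions, and the cited regularity theory applies (Wang, or Dinew--Zhang--Zhang / Evans--Krylov for concave uniformly elliptic equations). This yields scale-invariant $C^{2,\beta}$ estimates for $D^2_\C u$:
\[
[D^2_\C u]_{C^{\beta}(B_R)}\le C R^{-\beta}.
\]
Letting $R\to\infty$ shows that $D^2_\C u$ is a constant positive Hermitian matrix $A$. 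This is exactly the Riebesehl--Schulz conclusion.

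\textbf{Step 3: harmonic remainder.} Set $w=u-A[z]$, where $A[z]=\sum A_{i\bar j}z_i\bar z_j$. Then $w_{i\bar j}=0$, so $w$ is pluriharmonic, in particular harmonic on $\R^{2n}$. The growth condition \eqref{eq:growth} gives $|w|\le C\rho\psi$, so $w$ grows at most like $|z|^2\log|z|$. By the interior gradient estimates for harmonic functions, the third derivatives satisfy $|D^3w|\le CR^{-3}\sup_{B_{2R}}|w|\to0$, so $w$ is a polynomial of degree at most two. Hence $u=A[z]+w$ is a real quadratic polynomial. Alternatively, one can use only the lower bound, as in the argument of Section~\ref{sec:harmonic-remainder}.

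\textbf{Main obstacle.} I expect the crux to be Step~1: obtaining the trace bound uniformly in $R$ despite the logarithmic mismatch in \eqref{eq:growth}. In particular, one must check that the endpoint cases $\alpha=0$ and $\alpha=1$ still close. I would first try to verify that the averaging error bound in the pseudo-estimate degrades only like $\psi^{\alpha}\cdot\psi^{1-\alpha}$ divided by a compensating factor coming from the Green-function weights. If that fails, I would choose the averaging radius adaptively, $r^2\sim\rho/\psi$, to absorb the logarithm.
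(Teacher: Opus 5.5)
\textbf{There is a genuine gap in Step~1.} No estimate in the paper delivers the bound $\sup_{B_1}\tr D^2_\C u_R\le C'$ uniformly in $R$, and your sketch supplies no mechanism for it.

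The hypotheses of Theorems~\ref{thm:real-c2} and~\ref{thm:finite-ball} are not scale invariant, and their constants depend on $c,C$. For $u_R(h)=R^{-2}u(z_0+Rh)$ the lower bound fails at the centre: $u_R(0)=R^{-2}u(z_0)\to0$, while the hypothesis requires $u_R(0)\ge ce\,2^{-\alpha}$. On $|h|\sim1$ the two bounds differ by the unbounded factor $\psi\approx2\log R$. For plain translates $u_a(z)=u(z+a)$, the admissible constants satisfy $c_a\to0$ and $C_a\to\infty$ like $|a|^{\mp2}$, up to logarithms. This is why the paper uses Theorem~\ref{thm:real-c2} only qualitatively, to obtain smoothness of each translate.

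Averaging at a large radius $r\sim|z|$ does not help either. The crude growth bound then controls only $\tr D^2_\C(\mathcal G_ru)=\mathcal G_r(\tr D^2_\C u)$, a weighted mean of the trace over $B_r(z)$. That is $L^1$-type control of the Laplacian, and for the complex Monge--Amp\`ere equation it yields no pointwise second-derivative bound; the complex Pogorelov-type examples show exactly this obstruction.

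A pointwise trace bound comes only from the limit $t\downarrow0$. Let $L\to\infty$ with $K=0$, so that $\lambda_L=0$ and $Eq_LV\to0$. The maximum-principle argument then yields
\[
\mathcal G_tu-u\le2t^2V^{\varepsilon_-},\qquad
\tr D^2_\C u\le4(n+1)V^{\varepsilon_-}\le H_\theta(1+|z|^2)^{\varepsilon_-+\theta}.
\]
This bound grows with $|z|$. The logarithmic mismatch between the two growth bounds is consumed in making the distant-boundary term vanish; it is what forces the $\log\log$ rate of $q_L$. It does not produce a uniform Hessian bound.

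\textbf{Consequently Step~2 has no input.} Global uniform ellipticity is not available at that stage, so the Evans--Krylov/Wang blow-down cannot be run. The missing idea is a Liouville step that tolerates a slowly growing trace, namely the quantitative Calabi estimate
\[
\sup_{B_R}|D_\R(u_{i\bar j})|\le\frac{C_n}{R}\Bigl(\sup_{B_{2R}}\tr D^2_\C u\Bigr)^{n+1},
\]
applied to $R^{-2}u(Rz)$. Combined with the growth bound above, it gives decay $R^{-1+2(n+1)(\varepsilon_-+\theta)}\to0$, because $2(n+1)\varepsilon_-=1/2$. This is exactly why $\varepsilon_-=1/[4(n+1)]$ appears. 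The uniform Hessian bound you wanted is then a consequence of constancy, not an input.

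You also need $u\in C^2$ before passing to $t\downarrow0$. This comes from Theorem~\ref{thm:real-c2} and Corollary~\ref{cor:interior-smoothness} applied to translates, with $a$-dependent constants. Your Step~3 is fine, using either $|w|\le C\rho\psi$ or only the lower bound as in the paper.
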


We call \eqref{eq:growth} the \emph{weak quadratic growth condition}.
In particular, when $\alpha=0$ or $\alpha=1$, the condition reads
\[
\begin{array}{ll}
\alpha=0:&c\rho\le u\le C\rho\psi,\\[2pt]
\alpha=1:&c\rho/\psi\le u\le C\rho.
\end{array}
\]

\begin{remark}[The quadratic-growth case]\label{rem:quadratic-growth-case}
The condition \eqref{eq:growth} includes two-sided positive quadratic
growth. Indeed, if
\[
m(1+|z|^2)\le u(z)\le M(1+|z|^2),\qquad m,M>0,
\]
then $1+|z|^2\le\rho\le e(1+|z|^2)$ and $\psi\ge1$ give
\eqref{eq:growth} for every $\alpha\in[0,1]$, with
$c=\min\{1,m/e\}$ and $C=\max\{1,M\}$.
Thus Theorem~\ref{thm:main} contains the quadratic-growth rigidity
result for smooth entire solutions proved by Chu and Liu
\cite[Theorem~1.2]{ChuLiuQuadratic}.
\end{remark}

The following $C^2$ interior estimate is a crucial step in the
proof of the Liouville theorem. Together with
Corollary~\ref{cor:interior-smoothness}, it yields the smoothness
of Bedford--Taylor solutions used in Section~\ref{sec:regularity}
and provides local uniform ellipticity bounds for the linearized
equation. The estimate requires the density $f$ to be Dini continuous,
as defined below.

A \emph{Dini modulus} is a continuous nondecreasing function
$\omega:[0,1]\to[0,\infty)$ with $\omega(0)=0$ and
$\int_0^1\omega(s)\,ds/s<\infty$. Its tail is
\begin{equation}\label{c2:dini-tail}
\mathcal D_\omega(r)=\omega(r)+\int_0^r\frac{\omega(s)}s\,ds.
\end{equation}
The tail \eqref{c2:dini-tail} determines the initial scale in the
perturbation argument of Section~\ref{sec:real-c2}.

\begin{theorem}[$C^2$ interior estimate]\label{thm:real-c2}
Fix $0\le\alpha\le1$, independent constants $0<c\le1$,
$C\ge1$, numbers $0<c_0\le c_1$, $K\ge0$, and a Dini modulus
$\omega$. There are $L_0\ge16$ and $H<\infty$, depending only
on $n$ and these data, with the following property. Let $L\ge L_0$ and
let $u$ be locally bounded and plurisubharmonic on $B_{2L}$, with
\[
(\sqrt{-1}\partial\overline\partial u)^n=f\omega_0^n,
\qquad \frac{c\rho}{\psi^\alpha}\le u\le C\rho\psi^{1-\alpha}
\quad\text{in }B_{2L}.
\]
Suppose that $f\in C(B_{2L})$ satisfies
\[
c_0\le f\le c_1,\qquad
\Delta_\R(f^{1/n})\ge-K\quad\text{in distributions},
\]
and $|f(x)-f(y)|\le\omega(|x-y|)$ whenever $|x-y|\le1$.
Then $u\in C^2(B_1)$ and $\|u\|_{C^2(B_1)}\le H$.
All derivatives in this estimate are real derivatives, and the
constants are independent of $L$.
\end{theorem}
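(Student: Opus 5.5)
We plan to prove Theorem~\ref{thm:real-c2} in three stages: a pseudo-$C^2$ bound on the complex Laplacian, an upgrade to uniform ellipticity, and a perturbative step that yields real second derivatives.

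\emph{Stage 1: a trace bound for $D^2_\C u$ on $B_2$, uniform in $L$.} Since $u$ is only Bedford--Taylor, we first regularize. We solve Dirichlet problems on $B_{L}$ (or slightly smaller balls) with smoothed data $f_\varepsilon$ and boundary values $u_\varepsilon$ given by the Caffarelli--Kohn--Nirenberg--Spruck theorem \cite{CKNS}. Stability of Bedford--Taylor solutions then returns $u$ in the limit, so all estimates below may be done for smooth strictly plurisubharmonic solutions with constants independent of $\varepsilon$.

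For such a solution we control the Green-function averaging error
\[
\mathcal E_t(z)=\ballavg_t u(z)-u(z)\ \ (\text{or its sphere analogue }\sphereavg_t),
\]
which is a weighted integral of $\Delta_\R u$ against the Green function of the ball. The growth condition \eqref{eq:growth} bounds $\mathcal E_t$ by about $t^2\psi$ at scale $t\sim L$, up to logarithmic factors. Indeed, the logarithmic losses $\psi^{\alpha}$ and $\psi^{1-\alpha}$ on the two sides combine to $\psi$, so dividing by $t^2$ costs only $\psi(t)$.

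The key is a maximum-principle argument for a quantity of the form
\[
Q=\log\tr D^2_\C u-A\,\mathcal E+B\,(\text{cutoff built from }u/\rho),
\]
in which the lower bound $u\ge c\rho/\psi^\alpha$ supplies the strict convexity-type term. Concavity of $\log\det$ together with $\Delta_\R f^{1/n}\ge-K$ handles the density terms. Here we use the arithmetic--geometric inequality $\tr(u^{i\bar j})\ge n f^{-1/n}$ and the fact that differentiating $f^{1/n}=(\det u_{i\bar j})^{1/n}$ twice gives a concave operator inequality, as in the pseudo-$C^2$ estimate. This should give $\tr D^2_\C u\le C'$ on $B_2$. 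The distant-boundary contribution decays as $L\to\infty$, which is exactly why $L\ge L_0$ is needed.

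\emph{Stage 2: uniform ellipticity.} From $\tr D^2_\C u\le C'$ and $\det=f\ge c_0$ we get $\lambda^{-1}\le D^2_\C u\le\lambda$ on $B_2$, so the linearized operator $u^{i\bar j}\partial_i\partial_{\bar j}$ is uniformly elliptic.

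\emph{Stage 3: real $C^2$ bounds by Caffarelli-type perturbation.} With ellipticity in hand, we compare $u$ at each scale $r=2^{-k}$ with solutions of the frozen-coefficient equation $\det=f(z_0)$. These have $C^{2,\alpha}$ interior estimates by Evans--Krylov, since $\log\det$ is concave and the equation is uniformly elliptic thanks to Stage 2. The errors are controlled by $\omega(r)$ via the ABP/comparison principle, and summing the geometric series produces the tail $\mathcal D_\omega(r)$ of \eqref{c2:dini-tail}. The outcome is $u\in C^2(B_1)$ with a modulus depending only on the data, and the $C^0,C^1$ bounds follow from growth and plurisubharmonicity.

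The main obstacle is Stage 1. We must choose the test quantity so that the third-order terms are absorbed with only the weak, logarithmically lossy growth \eqref{eq:growth}. The Green average has to be used in place of a Pogorelov-type boundary weight, since the domain $B_{2L}$ carries no boundary normalization. We would first try to exploit the exact identity $\Delta(\ballavg u-u)$ expressed via the complex Laplacian, and bound the resulting error terms by $L^{-2}\psi(L)^{2}\to0$.
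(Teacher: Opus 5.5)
Your Stages 2 and 3 are essentially standard once a bound for $\tr D^2_\C u$ on $B_2$, uniform in $L$, is known. Stage 1, however, which you yourself flag as the main obstacle, is asserted (``this should give'') rather than proved, and the ingredients you list do not close it.

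\emph{Boundary control.} The quantity $\log\tr D^2_\C u$ has no a priori control on any outer boundary. The growth hypothesis controls only $u$, hence $\mathcal G_tu-u$, and not second derivatives.

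\emph{Cutoff errors.} A cutoff in $|z|$ or $\rho$ produces error terms involving $S=\tr(D^2_\C u)^{-1}$ (for instance $\mathcal L\rho=S$ and $|\partial\rho|_g^2\le\rho S$). Since $S$ is unbounded a priori, no $L^{-2}\psi(L)^2$ boundary error can simply be read off.

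\emph{Density terms.} Absorbing $|\partial\tr D^2_\C u|_g^2/(\tr D^2_\C u)^2$ in $\mathcal L\log\tr D^2_\C u$ requires the full third-order term from $\log\det$. That term comes with $-n|\partial\log f^{1/n}|^2$, which is uncontrolled when $f$ is only Dini continuous with $\Delta_\R(f^{1/n})\ge-K$. Concavity of $\det^{1/n}$ bounds $\mathcal L(\tr D^2_\C u)$ from below, but not its logarithm.

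The paper avoids all third-order terms by working with the zeroth-order quantity $Q_t=\mathcal G_tu-u$. The arithmetic--geometric mean inequality applied to translates of $u$ gives $\mathcal LQ_t\ge-SKt^2/[8(n+1)]$. The function $Q_t-2t^2B(V)$ is then compared with a denominator $D(V,\rho_A)$ carrying the pole weight $w_q$. This denominator is built so that $\mathcal LD\le D_V(n-dhS)$ handles the region where $S$ is large. On the artificial boundary, the growth bound $Q_t\le C_1\rho_Ab(s)$ alone gives $Q_t/D\le 2C_1q/a_0$.

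Even that comparison does not yield a trace bound at finite $L$. The distant boundary leaves the additive remainder $Eq_LV$ in \eqref{eq:positive-finite-ball}, which is independent of $t$ and decays only like $1/\log\log L$. Dividing by $t^2$ and letting $t\downarrow0$ therefore gives nothing.

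The missing idea is the conversion step, Theorem~\ref{thm:c2-transfer}: a bound $0\le\mathcal G_tu-u\le At^2+\delta$ with $\delta$ below a threshold implies a real $C^2$ bound. It is proved by compactness and contradiction:
\begin{itemize}
\item a limit with $\delta=0$ has bounded Laplacian, hence is $C^2$ by Lemma~\ref{c2:elliptic-dini};
\item its Taylor polynomials, corrected to the right determinant, serve as initial flat approximations for the approximating solutions;
\item Savin's small perturbation theorem with Dini summation (Lemma~\ref{c2:local-criterion}) then gives the uniform bound.
\end{itemize}
Uniform ellipticity, your Stage 2, appears only as a consequence of this step.

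A smaller point in Stage 3: bounding the complex Hessian of $u$ does not make its boundary data $C^{1,1}$. So the frozen solutions of $\det D^2_\C v=f(z_0)$ are not known to be uniformly elliptic. You should first replace $\log\det$ by a globally uniformly elliptic concave extension, as in Lemma~\ref{c2:elliptic-dini}, and only then apply Evans--Krylov and X.-J.~Wang's Dini estimate.
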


\begin{corollary}[Interior smoothness]\label{cor:interior-smoothness}
Under the hypotheses of Theorem~\ref{thm:real-c2}, if
$f\in C^\infty(B_1)$, then $u\in C^\infty(B_1)$.
In particular, this holds when $f\equiv1$.
\end{corollary}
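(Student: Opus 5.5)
Theorem~\ref{thm:real-c2} gives $u\in C^2(B_1)$ with $\|u\|_{C^2(B_1)}\le H$, and hence $C\ge D^2_\C u\ge c'\Id_n$ on $B_1$ with $c'>0$ depending on $c_0$ and $H$. The plan is to upgrade this $C^2$ bound to $C^{2,\beta}$ and then bootstrap by Schauder theory. The statement is local and interior, so we will freely shrink radii slightly. To keep the final conclusion on all of $B_1$, we apply Theorem~\ref{thm:real-c2} in a slightly larger ball. Since its hypotheses hold on $B_{2L}$ with $L\ge L_0\ge16$, we can translate and rescale: recentering at points of $B_1$ and using the same argument gives $C^2$ bounds on $B_{1+\delta}$. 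If that is inconvenient, we instead prove smoothness near each point of $B_1$ via the same local argument.

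\textbf{Step 1 ($C^{2,\beta}$ regularity).} With $u\in C^{1,1}$, $D^2_\C u$ uniformly elliptic, and $f$ positive and smooth (hence H\"older), we invoke the result of Dinew, Zhang, and Zhang \cite{DinewZhangZhang}. It gives $u\in C^{2,\beta}$ on compact subsets for some $\beta\in(0,1)$. Alternatively, we use the concavity of $\log\det$ and Evans--Krylov type interior estimates: for a $C^{1,1}$ solution of a uniformly elliptic concave equation with H\"older right-hand side, Caffarelli's $C^{2,\alpha}$ theory for viscosity solutions applies once we check that the Bedford--Taylor solution is a viscosity solution. For $C^{1,1}$, and in particular continuous, $u$ with continuous positive density this is standard. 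This step is the main obstacle: we must make sure the weak solution genuinely satisfies the pointwise equation $\det D^2_\C u=f$ almost everywhere. This holds because for $W^{2,\infty}_{loc}$ plurisubharmonic functions the Bedford--Taylor measure coincides with $\det(u_{i\bar j})\,\omega_0^n$, which we obtain by mollification and a.e.\ convergence of second derivatives.

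\textbf{Step 2 (bootstrap).} Once $u\in C^{2,\beta}$, differentiate the equation $\log\det(u_{i\bar j})=\log f$ in a real direction $e$, using difference quotients to justify it. The derivative $w=\partial_e u$ solves the linear equation $u^{i\bar j}w_{i\bar j}=\partial_e\log f$. The coefficients $u^{i\bar j}$ are $C^\beta$ and uniformly elliptic by the bounds above, and the right-hand side is smooth. Schauder interior estimates therefore give $w\in C^{2,\beta}$, that is, $u\in C^{3,\beta}$. Iterating, with the coefficients gaining one derivative at each stage, yields $u\in C^{k,\beta}$ for every $k$ on slightly smaller balls. Choosing the nested radii accumulating at the boundary of $B_1$, or covering $B_1$ by small balls around each point, gives $u\in C^\infty(B_1)$.

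\textbf{The case $f\equiv1$.} Here $f^{1/n}=1$, so $\Delta_\R(f^{1/n})=0\ge-K$ with $K=0$, and $\omega\equiv0$ is a Dini modulus. Theorem~\ref{thm:real-c2} therefore applies, and the argument above gives smoothness.
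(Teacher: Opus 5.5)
Your proposal is correct and follows essentially the paper's route: you take the pointwise equation and two-sided eigenvalue bounds from the $C^2$ estimate, then invoke a Hölder estimate for second derivatives (the paper cites Wang's interior estimate where you cite Dinew--Zhang--Zhang), and finish with difference quotients and Schauder bootstrapping. The translation/rescaling to reach $B_{1+\delta}$ is unnecessary, and as stated unjustified, since it changes the growth constants and hence the required $L_0$; smoothness on the open ball $B_1$ is local, and your fallback of working on small balls around each point of $B_1$ is exactly what the paper does.
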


A main ingredient of the proof is the following estimate for the
Green-function averaging error. We first prove it for smooth solutions
with smooth positive density, then extend it to nonnegative densities
and continuous potentials, and finally to locally bounded
Bedford--Taylor solutions. We introduce the averaging operator before
stating the estimate.

For a locally bounded Borel measurable function $\varphi$, define its
uniform sphere and ball averages, centered at $z$, by
\begin{align}
\sphereavg_t\varphi(z)
&=\frac1{|\partial B_t|}\int_{\partial B_t}\varphi(z+h)\,d\sigma(h),
\label{eq:sphere-definition}\\
\ballavg_s\varphi(z)
&=\frac1{|B_s|}\int_{B_s}\varphi(z+h)\,dh.
\label{eq:ball-definition}
\end{align}
Here $d\sigma$ is surface measure, $dh$ is ordinary volume measure,
and the denominators are the corresponding total measures.

The operator used in the estimate averages $\ballavg_s\varphi$ from
\eqref{eq:ball-definition} with respect to $s$:
\begin{equation}\label{eq:weighted-average}
\mathcal{G}_t\varphi(z)
=\frac2{t^2}\int_0^t s\,\ballavg_s\varphi(z)\,ds.
\end{equation}
Its weights are nonnegative and have total mass one, since
$2t^{-2}\int_0^t s\,ds=1$.
We call $\mathcal{G}_t$ the \emph{Green-function averaging operator};
its representation by the Green function of a ball is given in
Section~\ref{sec:weighted-balls}. For a solution $w$, the quantity
we will estimate is
\begin{equation}\label{eq:Q-definition}
Q_t=\mathcal{G}_t w-w.
\end{equation}
A plurisubharmonic function is subharmonic in the $2n$ real
coordinates. The submean inequality gives $\ballavg_s w(z)\ge w(z)$,
so \eqref{eq:weighted-average} and \eqref{eq:Q-definition} give
$Q_t\ge0$.

\begin{theorem}[Pseudo-$C^2$ interior estimates]\label{thm:finite-ball}
Fix $n,c,C,\alpha$ as in Theorem~\ref{thm:main}, and $F>0$.
Put $\varepsilon_-=1/[4(n+1)]$. There are constants
$A_0\ge\sqrt e$, $J\ge2$, and $E<\infty$, depending only on
$n,c,C,\alpha,F$, with the following properties. Select these constants using the fixed
upper growth buffer $2C$ and density bound $(F^{1/n}+1)^n$, as in
Section~\ref{sec:nonnegative-density}. These buffers depend only on
the stated data. Set
\begin{gather}
\rho_A=A_0^2+|z|^2,\quad s=\log\rho_A,\quad
h(s)=(1+s)^{-\alpha},\quad a=c/4,\quad d=a/4,\notag\\
V=u+J-a\rho_Ah(s)>0,\qquad
s_A=\log A_0^2,\quad s_L=\log(A_0^2+L^2),\notag\\
q_L=\left[\log\frac{1+s_L}{1+s_A}\right]^{-1},\qquad
\lambda_L=\frac{K(1+s_L)^\alpha}{8(n+1)d}.
\label{eq:pseudo-parameters}
\end{gather}
For every fixed $R\ge2$, there is $L_R>R+1$, depending only
on the same data and $R$, such that the following holds for
$L\ge L_R$. Suppose that $u$ is locally bounded and
plurisubharmonic on $B_{2L}$, satisfies the growth condition
\eqref{eq:growth} there, and solves
\[
(\sqrt{-1}\partial\overline\partial u)^n=f\omega_0^n,
\qquad 0\le f\le F,\qquad
\Delta_\R(f^{1/n})\ge-K
\]
in the Bedford--Taylor and distributional senses, respectively,
where $f$ is continuous. Then, for $z\in B_R$ and $0<t\le1$,
\begin{equation}\label{eq:finite-ball}
0\le Q_t(z)\le
2t^2V(z)^{\varepsilon_-}
+\bigl(\lambda_Lt^2+Eq_L\bigr)V(z).
\end{equation}
If, in addition, $f\ge c_0>0$, put
$\varepsilon_+=1-1/(2n)$. Constants $A_0,J,E,L_R$ may instead
be selected from $n,c,C,\alpha,F,c_0,K$ (and $R$ for $L_R$) so that
\begin{equation}\label{eq:positive-finite-ball}
0\le Q_t(z)\le2t^2V(z)^{\varepsilon_+}+Eq_LV(z).
\end{equation}
In both estimates $A_0,J,E$ are independent of $L,u,t$.
\end{theorem}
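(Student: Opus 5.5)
The plan is to prove \eqref{eq:finite-ball} and \eqref{eq:positive-finite-ball} first for smooth, strictly plurisubharmonic $u$ with smooth positive density on $\cl{B_{2L}}$. The general case then follows in two approximation steps. First, replace $f$ by a regularization $(f_\delta^{1/n}+\delta)^n$ and solve Dirichlet problems on slightly smaller balls with $u$-boundary data (Caffarelli--Kohn--Nirenberg--Spruck). Second, pass to the limit using Bedford--Taylor continuity of the Monge--Amp\`ere operator along decreasing sequences. The growth buffer $2C$ and density bound $(F^{1/n}+1)^n$ in the statement are exactly what these perturbations consume. Since both $Q_t$ and the right-hand sides are stable under such limits, it suffices to produce constants for the smooth problem that depend only on $n,c,C,\alpha,F$ (and $c_0,K$ in the second case).

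\textbf{Step 1: a differential inequality for $Q_t$.} Let $L_u=u^{i\bar j}\partial_i\partial_{\bar j}$. Concavity of $A\mapsto(\det A)^{1/n}$ applied to $A=D^2_\C u(z)$ and $A_h=D^2_\C u(z+h)$ gives
\[
f(z+h)^{1/n}\le f(z)^{1/n}+\tfrac1n f(z)^{1/n}\tr\bigl(A^{-1}(A_h-A)\bigr).
\]
Averaging in $h$ against the nonnegative weights of $\mathcal G_t$ yields
\[
L_uQ_t\ge n\,\frac{\mathcal G_t f^{1/n}-f^{1/n}}{f^{1/n}}.
\]
The hypothesis $\Delta_\R(f^{1/n})\ge-K$, combined with the Green representation of $\mathcal G_t-\Id$ from Section~\ref{sec:weighted-balls}, gives $\mathcal G_tf^{1/n}-f^{1/n}\ge-c_nKt^2$. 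Hence $L_uQ_t\ge-c_nKt^2f^{-1/n}$. Expressing $f^{-1/n}$ through the arithmetic--geometric mean inequality, $f^{-1/n}\le\tfrac1n\tr(u^{i\bar j})$, bounds this below by $-c'_nKt^2\tr(u^{i\bar j})$; this is the source of the $\lambda_Lt^2V$ term. When $f\ge c_0$ the error is simply bounded, and the term can be absorbed differently, which is why it disappears in \eqref{eq:positive-finite-ball}. The same Green representation also shows $Q_t$ is comparable to $t^2$ times a weighted average of $\Delta_\R u$, so $Q_t$ dominates $t^2\tr(u_{i\bar j})$ up to averaging.

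\textbf{Step 2: the barrier $V$.} Using $L_u u=n$ and the monotonicity of $h(s)=(1+s)^{-\alpha}$, compute
\[
L_uV\;\ge\; n-a\,\tr(u^{i\bar j})\bigl(h+h'\bigr)(\cdots).
\]
With $J$ and $A_0$ chosen large, I then plan to show three things. First, $V>0$. Second, $V\simeq\rho_Ah$ from both sides by the growth condition. Third, the mixed quantity $V^{\varepsilon}$ satisfies $L_u(V^\varepsilon)\ge\varepsilon V^{\varepsilon-1}\bigl(n-\ldots\bigr)-\varepsilon(1-\varepsilon)V^{\varepsilon-2}|\partial V|^2_u$. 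The gradient term is controlled by the product $\tr(u^{i\bar j})\cdot\tr(u_{i\bar j})$ together with $Q_t\gtrsim t^2\tr(u_{i\bar j})$. This is where the exponents $\varepsilon_-=1/[4(n+1)]$ and $\varepsilon_+=1-1/(2n)$ enter. In the positive case, $\tr(u^{i\bar j})\le C\tr(u_{i\bar j})^{n-1}/c_0$ gives a better balance.

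\textbf{Step 3: maximum principle and the boundary.} Consider on $B_L$ the function
\[
\Phi=Q_t-2t^2V^{\varepsilon}-\lambda_Lt^2V-Eq_L\,V\,\zeta(s),
\qquad \zeta(s)=\log\frac{1+s}{1+s_A}.
\]
By construction $q_L\zeta=1$ at $|z|=L$. The log-log cutoff $\zeta$ is designed so that $L_u(V\zeta)$ has a favorable sign, its error being of order $\tr(u^{i\bar j})\rho_Ah/(1+s)$, which is absorbable by $E$. At the outer sphere, one needs $Q_t\le E\,V$. This follows from $Q_t\le\sup_{B_{L+1}}u-u$ and the growth condition, provided the loss factor $\psi$ between the upper and lower bounds is compensated. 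I expect \emph{this} to be the main obstacle. The two-sided bounds differ by exactly a factor of $\psi$, so the boundary comparison must instead use $Q_t\lesssim t^2\cdot(\text{average of }\Delta u)$ over a shell, estimated via integration by parts against the growth of $u$, so that the extra logarithm is traded for the $\log\log$ in $\zeta$. If the cutoff instead matched to the slower $\log$ scale, the bound would not close.

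At an interior positive maximum, Steps 1--2 force $L_u\Phi<0$, a contradiction, once $E$, $J$, $A_0$ are fixed. Therefore $\Phi\le0$. Restricting to $z\in B_R$, where $\zeta\le\zeta(\log(A_0^2+R^2))$ is bounded, and absorbing that bound into $E$ for $L\ge L_R$, yields \eqref{eq:finite-ball} and \eqref{eq:positive-finite-ball}.
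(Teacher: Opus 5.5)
Your Step~1 bound $L_uQ_t\ge-c_n'Kt^2S$, with $S=\tr(u^{i\bar j})$, is correct and matches the paper. The overall plan (smooth case, then Dirichlet approximation) is also the paper's. The maximum-principle core, however, has two genuine gaps.

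\textbf{No coercive inequality where $S$ is bounded.} The source bound is useful only where $S$ is large. A contradiction at a positive interior maximum of $\Phi$ requires $L_u\Phi>0$ there, since $L_u\Phi\le0$ holds automatically. Where $S\lesssim1/h$, the terms $-2t^2L_u(V^\varepsilon)$ and $-\lambda_Lt^2L_uV$ are bounded below only by negative multiples of $t^2$. Nothing in Steps~1--2 then gives $L_u\Phi>0$.

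The missing idea is the second averaging identity:
$L_uQ_t=\tr\bigl(A^{-1}D^2_\C(\mathcal G_tu)\bigr)-n$, $D^2_\C(\mathcal G_tu)\ge0$, and $Q_t\le\sphereavg_tu-u=\frac{t^2}{n}\tr D^2_\C(\mathcal G_tu)$. Take $T=n/(dh)$. On $\{S\le T\}$ every eigenvalue of $A^{-1}$ is at least $\mu=1/(FT^{n-1})$, because $\det A^{-1}=f^{-1}\ge F^{-1}$. Hence $L_uQ_t\ge\frac{n\mu}{t^2}Q_t-n$ there. Wherever $Q_t>2t^2V^\varepsilon$, this zeroth-order term beats $-n$ once $\mu V^\varepsilon\ge1$. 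That requirement, not a gradient term, is what restricts the exponent. For $f\ge c_0$ the threshold becomes $T\sim V^{1-\varepsilon_+}$, which forces $n\varepsilon_+-n+1>0$.

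Your comparison $Q_t\gtrsim t^2\tr(u_{i\bar j})$ points the wrong way. The gradient term it was meant to control is harmless: $V^\varepsilon$ enters $\Phi$ with a minus sign, so you only need $L_u(V^\varepsilon)\le\varepsilon V^{\varepsilon-1}L_uV$. This is free by concavity, combined with the upper bound $L_uV\le n-3dhS$.

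\textbf{The barrier and the boundary.} As you suspected, the boundary step fails as proposed. Since $q_L\zeta=1$ on $\partial B_L$, you need $Q_t\le EV$ there. The available pointwise bound is $Q_t\le\mathcal G_t(u+J)\le C_1\rho_A(1+s)^{1-\alpha}$, while $V$ can be as small as a multiple of $\rho_A(1+s)^{-\alpha}$. The loss is a factor $1+s\sim\log L$. A shell integral of $\Delta_\R u$ gives no pointwise control of $Q_t$ at boundary points.

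The barrier $V\zeta$ is also not a supersolution on $\{S>T\}$. At $z=0$ we have $\zeta=0$ and $\partial\zeta=0$, yet $L_u(V\zeta)=VS/(A_0^2(1+s_A))>0$. In general the cross term $\operatorname{Re}\langle\partial V,\partial\zeta\rangle_u$ involves $\partial u$, which nothing controls.

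The paper replaces both $B_L$ and the additive barrier by a quotient $H/D$, with $D=w_qV-\rho_Aw_qG$.
\begin{itemize}
\item $D$ is concave in $V$ through the branch $V=\rho_A\sigma\tau e^{-\tau}/k_q$. The sign $D_{VV}<0$ lets one complete the square in $\partial V$, giving $L_uD\le D_V(n-dhS)$ and $D_V/D<1/V$.
\item The weight $w_q=(1-q\zeta)^{-1}$ (your $\zeta$ is the paper's $I$) has a pole at $s_L$, and $k_q=w_q'/w_q=qw_q/(1+s)$.
\item The comparison set $\Omega_q=\{V<a_0\rho_Ah/k_q\}$ is empty near the stopping sphere. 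Its entire boundary is therefore the level set $V=a_0\rho_Ah/k_q$.
\item On that level set $D\ge\frac12w_qV$, so $Q_t/D\le\frac{2C_1}{a_0}(1+s)k_q/w_q=\frac{2C_1}{a_0}q$. The factor $1/(1+s)$ in $k_q$ cancels exactly the gap between the two growth bounds, while $w_q\le2$ on $B_r$.
\end{itemize}
Your $\log\log$ intuition is right, but it works only multiplicatively, with a pole and a level-set boundary, not additively on $B_L$.

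Separately, for merely locally bounded $u$ you cannot take $u$ itself as boundary data in the Caffarelli--Kohn--Nirenberg--Spruck theorem. The paper uses decreasing smooth subsolution data $u*\vartheta_{\varepsilon_j}+a_j\rho$. It then identifies the limit of the Dirichlet solutions with $u$ by a comparison principle on an intermediate ball.
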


By \eqref{eq:pseudo-parameters}, $q_L$ tends to zero with order
$1/\log\log L$, whereas $\lambda_L$ may grow when $K>0$.
For $f\equiv1$, we take $K=0$, so $\lambda_L=0$.
For positive densities, the growth bound \eqref{eq:growth} and the
definition of $V$ in \eqref{eq:pseudo-parameters} bound $V$ on
each fixed ball, independently of $L$. Thus
\eqref{eq:positive-finite-ball} gives
$Q_t\le A_Rt^2+E_Rq_L$ with $A_R,E_R$ independent of $L$.
Section~\ref{sec:real-c2} uses this estimate to obtain second derivatives.

\begin{corollary}[Constant-density averaging estimate]\label{thm:constant-density}
Under the hypotheses of Theorem~\ref{thm:main}, there are
constants $A_0,J$ selected as above such that
\begin{equation}\label{eq:global-average}
0\le\mathcal G_tu-u\le2t^2V^{\varepsilon_-}
\quad\text{on }\C^n,\qquad 0<t\le1.
\end{equation}
Consequently $u$ is smooth and
\begin{equation}\label{eq:global-trace}
\tr D^2_\C u\le4(n+1)V^{\varepsilon_-}.
\end{equation}
\end{corollary}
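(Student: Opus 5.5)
The plan is to apply Theorem~\ref{thm:finite-ball} on ever larger balls and let $L\to\infty$. Under the hypotheses of Theorem~\ref{thm:main}, $u$ is locally bounded and plurisubharmonic on all of $\C^n$. It satisfies \eqref{eq:growth} globally and solves the equation with $f\equiv1$. Hence on every ball $B_{2L}$ the hypotheses of Theorem~\ref{thm:finite-ball} hold with $F=1$ and $K=0$, since $\Delta_\R(1^{1/n})=0$.

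We use the first estimate \eqref{eq:finite-ball}, the nonnegative-density version. Its constants $A_0,J,E$ depend only on $n,c,C,\alpha,F$, with $F=1$. The quantity $V=u+J-a\rho_Ah(s)$ is defined pointwise from $u$ and these constants, so it does not depend on $L$. With $K=0$ we get $\lambda_L=0$. Fix $z\in\C^n$ and $0<t\le1$, and choose $R\ge2$ with $|z|<R$. For every $L\ge L_R$ we then have
\[
0\le Q_t(z)\le 2t^2V(z)^{\varepsilon_-}+Eq_LV(z).
\]
The left side $\mathcal G_tu(z)-u(z)$ does not involve $L$, because $\mathcal G_t$ only samples $u$ on $B_t(z)\subset B_{R+1}$. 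Now let $L\to\infty$. Then $s_L\to\infty$, so $q_L=[\log((1+s_L)/(1+s_A))]^{-1}\to0$, while $V(z)$ stays fixed and finite. This proves \eqref{eq:global-average} at $z$, and $z$ was arbitrary.

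For smoothness, apply Theorem~\ref{thm:real-c2} and Corollary~\ref{cor:interior-smoothness} with $f\equiv1$, $c_0=c_1=1$, $K=0$ and $\omega\equiv0$. Strictly speaking $\omega\equiv0$ is not a Dini modulus in the sense that matters, so we may take any Dini modulus, e.g.\ $\omega(s)=s$. These results need the growth condition on $B_{2L}$ around the origin. To cover an arbitrary point we translate. The growth bound changes under translation, but the corollary as stated only needs regularity near every point. I would handle this by noting that \eqref{eq:global-average} already gives a uniform local bound on $Q_t/t^2$, which bounds the Laplacian, and then invoke local regularity theory. Alternatively, one can check that Theorem~\ref{thm:real-c2} admits centers $z_0$ in any fixed ball with constants depending on $|z_0|$. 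I expect this localization to be the main obstacle; I would first try the second route, rescaling $B_1$ to $B_{R}$ via covering.

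Once $u\in C^2$, the trace bound follows from a Taylor expansion. Subharmonicity-type expansions give $\ballavg_s u(z)=u(z)+\frac{s^2}{2(2n+2)}\Delta_\R u(z)+o(s^2)$. Then
\[
\mathcal G_tu(z)-u(z)=\frac{2}{t^2}\int_0^t s\cdot\frac{s^2}{2(2n+2)}\Delta_\R u(z)\,ds+o(t^2)=\frac{t^2}{8(n+1)}\Delta_\R u(z)+o(t^2).
\]
Dividing \eqref{eq:global-average} by $t^2$ and letting $t\to0$ gives $\Delta_\R u\le16(n+1)V^{\varepsilon_-}$. Since $\tr D^2_\C u=\Delta_\R u/4$, this yields $\tr D^2_\C u\le4(n+1)V^{\varepsilon_-}$, which is \eqref{eq:global-trace}.
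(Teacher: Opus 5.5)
Your derivation of \eqref{eq:global-average} from Theorem~\ref{thm:finite-ball} is exactly the paper's: take $F=1$ and $K=0$, so $\lambda_L=0$, fix $z$ and $t$, and let $L\to\infty$ so that $q_L\to0$. Your Taylor-expansion derivation of \eqref{eq:global-trace} once $u\in C^2$ is also the paper's.

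The only incomplete point is smoothness, which you leave as the ``main obstacle''. It closes in one line, which is what the paper does. For each $a\in\C^n$, the ratios $\rho(z+a)/\rho(z)$ and $\psi(z+a)/\psi(z)$ are continuous, positive, and tend to $1$ at infinity, so they lie between positive constants depending on $a$. Hence $u_a(z)=u(z+a)$ satisfies \eqref{eq:growth} with the same $\alpha$ and some $0<c_a\le1\le C_a$. Theorem~\ref{thm:real-c2} and Corollary~\ref{cor:interior-smoothness} then apply to $u_a$ on a large ball and give $u\in C^\infty(B_1(a))$ for every $a$. No covering is needed. Rescaling $R^{-2}u(R\,\cdot)$ also preserves the equation and \eqref{eq:growth} up to $R$-dependent constants, so your second route works as well.

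Your first route is also valid, and it is lighter. For a merely locally bounded $u$ you cannot divide by $t^2$ and expand pointwise; use duality instead. Test \eqref{eq:global-average} against $0\le\varphi\in C_c^\infty$ and use the symmetry of the kernel of $\mathcal G_t$, as in the proof of \eqref{c2:limit-trace}. This gives $0\le\Delta_\R u\le16(n+1)V^{\varepsilon_-}$ in distributions, and the right side is locally bounded. Lemma~\ref{c2:elliptic-dini} with $f\equiv1$ then gives $u\in C^2(\C^n)$. The bootstrap in the proof of Corollary~\ref{cor:interior-smoothness} uses only $u\in C^2$ and a smooth positive density, so it gives $C^\infty$. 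This route bypasses the Savin-perturbation and compactness argument behind Theorem~\ref{thm:real-c2}, because the $t$-independent remainder $Eq_LV$ has already been sent to zero.

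A minor point: $\omega\equiv0$ is a Dini modulus under the paper's definition, so you do not need to replace it.
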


We call Theorem~\ref{thm:finite-ball} a pseudo-$C^2$ estimate
because the averaging expansion recovers second derivatives only
after the term independent of $t$ has been removed. On an entire
solution, first let $L\to\infty$ in \eqref{eq:finite-ball} with
$K=0$, keeping $z,t$ fixed, and only then let $t\downarrow0$.
On a sufficiently large finite ball, the positive-density estimate
\eqref{eq:positive-finite-ball} instead gives the small remainder
needed in Section~\ref{sec:real-c2}.

\begin{remark}\label{rem:growth-assumptions}
A lower growth assumption cannot simply be omitted: boundedness below and quadratic upper growth do not imply rigidity.
For example, on $\C^3$ the function
\begin{equation}\label{eq:upper-only-example}
w(z)=16^{1/3}
\bigl(|z_1z_2|+|z_1z_3|+|z_2z_3|\bigr)
\end{equation}
is a continuous nonquadratic plurisubharmonic solution of
$(\sqrt{-1}\partial\overline{\partial}w)^3=\omega_0^3$ satisfying
\[
0\le w(z)\le16^{1/3}|z|^2,
\qquad w(t,0,0)=0.
\]
For $n>3$, adding $\sum_{j=4}^n|z_j|^2$ to
\eqref{eq:upper-only-example} gives a counterexample on $\C^n$.
For the function in \eqref{eq:upper-only-example},
\begin{equation}\label{eq:upper-only-semiconvex}
w+\frac{16^{1/3}}2|z|^2
=\frac{16^{1/3}}2\bigl(|z_1|+|z_2|+|z_3|\bigr)^2.
\end{equation}
The right-hand side is convex, since the sum of the moduli is
nonnegative and convex. Hence \eqref{eq:upper-only-semiconvex}
makes $w$ semiconvex, so semiconvexity also does not imply rigidity
in this weak class under quadratic upper growth.
The determinant calculation and the extension across the coordinate
hyperplanes are given in Section~\ref{sec:continuous-counterexample-check}.

\end{remark}

\begin{remark}\label{rem:smooth-counterexample}
Appendix~\ref{sec:smooth-counterexamples} gives a counterexample
with the additional advantage of being smooth everywhere.
In particular, it constructs a nonquadratic
$v\in C^\infty(\C^2)$ that is strictly plurisubharmonic and satisfies
\[
\det D^2_{\C}v=1,
\qquad 0<v(z)\le8(1+|z|^2).
\]
Thus the failure of rigidity under quadratic upper growth is not
caused by the singularities of \eqref{eq:upper-only-example}.
In fact, for every $2\le k\le n$, this $v$ extends to $\C^n$
by setting
\begin{equation}\label{eq:smooth-sigma-k-extension}
w(z_1,\ldots,z_n)=v(z_1,z_2)+\sum_{j=3}^{k}|z_j|^2,
\end{equation}
with an empty sum when $k=2$.
By \eqref{eq:smooth-sigma-k-extension} and
Proposition~\ref{prop:smooth-counterexample}, the complex Hessian of $w$
has exactly $k$ positive eigenvalues with product $1$ and $n-k$ zero eigenvalues.
Hence $\sigma_k(D^2_{\C}w)=1$ and
$\sigma_j(D^2_{\C}w)>0$ for $1\le j\le k$, so $w$ is
$\Gamma_k$-admissible.
Formula~\eqref{eq:smooth-sigma-k-extension} and
Proposition~\ref{prop:smooth-counterexample} also give
$0<w(z)\le8(1+|z|^2)$.
This contrasts with the real quadratic-growth Liouville theorem
of Li and Wu \cite[Remark~1.2]{LiWu} for
$\sigma_2(D^2_{\R}v)=1$.
\end{remark}

\subsection{Organization of the paper}
Section~\ref{sec:averages} explains the averaging identities and their
relation to Hessian bounds. Section~\ref{sec:smooth} proves the
pseudo-$C^2$ estimates for smooth solutions with smooth positive density.
Sections~\ref{sec:nonnegative-density} and \ref{sec:approximation}
extend them first to nonnegative continuous densities and continuous
potentials, and then to locally bounded Bedford--Taylor solutions.
Section~\ref{sec:real-c2} proves Theorem~\ref{thm:real-c2} and
Corollary~\ref{cor:interior-smoothness}.
Section~\ref{sec:regularity} proves Theorem~\ref{thm:main}.
Appendix~\ref{sec:geometric-appendix} gives the alternative geometric
proof for the quadratic-growth special case.
Appendix~\ref{sec:smooth-counterexamples} constructs the smooth
two-dimensional counterexample.

\section{From Green-function averages to a Hessian bound}
\label{sec:averages}
We explain how an estimate for the averaging error supplies the Hessian
bound needed for rigidity. The calculation for smooth functions identifies
the second-order quantity being controlled; the subsequent discussion
explains how to recover the same bound for a weak solution.

For weak quadratic growth, the intermediate estimate
\eqref{eq:global-trace} is local rather than uniform on the whole space:
\[
0\le\tr D^2_\C u\le4(n+1)V^{\varepsilon_-}.
\]
The growth bound \eqref{eq:wg-normalization} controls the right side
by a small power of $\rho$; the Calabi estimate
\eqref{eq:calabi-decay} then makes the complex Hessian constant.
We therefore begin with $\mathcal G_t\varphi-\varphi$, which is
also defined for a locally bounded potential.

\subsection{The second-order limit of \texorpdfstring{$\mathcal G_t$}{Gt}}

We compute the second-order limit of $\mathcal G_t\varphi-\varphi$
to determine precisely which Hessian quantity the averaging error
measures. The operators $\sphereavg_t$, $\ballavg_s$, and
$\mathcal G_t$ are defined in \eqref{eq:sphere-definition},
\eqref{eq:ball-definition}, and \eqref{eq:weighted-average}.

Here $\tr D^2_{\C}\varphi=\sum_i\varphi_{i\bar i}$ is the sum
of the diagonal entries of the complex Hessian. Taylor expansion
in the real coordinates gives
\begin{equation}\label{eq:averaging-Taylor}
\varphi(z+h)=\varphi(z)+\nabla_\R\varphi(z)\cdot h
+\frac12h^TD^2_\R\varphi(z)h+o(|h|^2),
\end{equation}
where $\nabla_\R\varphi$ is the vector of first real derivatives,
and $h$ is viewed as a vector in $\R^{2n}$.
On $B_s$, symmetry makes the average of each coordinate and of
each product of two different coordinates zero. By radial integration
and symmetry, the average of each squared coordinate is
$s^2/(2n+2)$. Averaging \eqref{eq:averaging-Taylor}, with
$\tr D^2_\R\varphi=\Delta_\R\varphi$, therefore gives
\[
\ballavg_s\varphi-\varphi
=\frac{s^2}{4(n+1)}\Delta_\R\varphi+o(s^2).
\]
Integrating this identity with the weights in
\eqref{eq:weighted-average}, and using
$\Delta_\R\varphi=4\tr D^2_\C\varphi$, yields
\begin{equation}\label{eq:Green-expansion}
\mathcal{G}_t\varphi-\varphi
=\frac{t^2}{8(n+1)}\Delta_\R\varphi+o(t^2)
=\frac{t^2}{2(n+1)}\tr D^2_{\C}\varphi+o(t^2).
\end{equation}
By \eqref{eq:Green-expansion}, the bound
$\mathcal{G}_t\varphi-\varphi\le\Lambda t^2/[2(n+1)]$
implies $\tr D^2_{\C}\varphi\le\Lambda$ after multiplication
by $2(n+1)/t^2$ and passage to the limit. If $\varphi$ is
plurisubharmonic, every Hessian eigenvalue is nonnegative and
at most $\tr D^2_{\C}\varphi\le\Lambda$. We then obtain
$0\le D^2_{\C}\varphi\le\Lambda\Id_n$.

\subsection{From the estimates to a Hessian bound}
The domain $B_{2L}$ leaves room for all the translations in
$\mathcal G_tu$. On each fixed ball, $q_L\to0$ in
\eqref{eq:pseudo-parameters}. For an entire solution with $f=1$,
the density-error coefficient $\lambda_L$ is zero. Thus
\eqref{eq:finite-ball}, first with $L\to\infty$ at fixed $z,t$,
gives \eqref{eq:global-average}.

For a smooth solution, divide \eqref{eq:global-average} by $t^2$
and use \eqref{eq:Green-expansion}; this gives
\eqref{eq:global-trace}. For a Bedford--Taylor solution, first apply
Theorem~\ref{thm:real-c2} and Corollary~\ref{cor:interior-smoothness}
to translates of $u$, as in Section~\ref{sec:regularity}.
The same expansion then gives \eqref{eq:global-trace} pointwise.

At a fixed finite $L$, division by $t^2$ would leave a term of
size $q_L/t^2$. Section~\ref{sec:real-c2} instead uses the
positive-density estimate \eqref{eq:positive-finite-ball} to
make the averaging remainder small before invoking a local
perturbation criterion. The order of these two steps is essential because the remainder
$Eq_LV$ in \eqref{eq:positive-finite-ball} is independent of $t$.

\section{The interior estimate for a positive density}
\label{sec:smooth}

We first prove Theorem~\ref{thm:finite-ball} for a smooth
potential $u$ and a smooth positive density $f$.
The proof compares a corrected averaging error with a positive
function built from $u$. A bounded-domain comparison makes the
boundary contribution tend to zero as the outer radius increases.

Since $D^2_{\C}u\ge0$ and its determinant is positive, $u$ is
strictly plurisubharmonic. The root $f^{1/n}$ is smooth, so its
Laplacian inequality holds pointwise. Write $Q_t=\mathcal G_t u-u$.
The averaging identities below relate $Q_t$ to $\mathcal LQ_t$,
where $\mathcal L$ is defined in \eqref{eq:coefficients}.
They will be applied at a maximum of the weighted quotient. Throughout the averaging calculations,
$z\in B_L$ and $0<t\le1$. Thus every translated point lies in
$B_{L+1}\subset B_{2L}$, where the hypotheses hold.

\subsection{The averaging identities}
\label{sec:weighted-balls}

We derive the two inequalities that connect averaging to the
determinant equation: a relation between the averaging error and
the trace of the averaged Hessian, and a lower bound for the
linearized operator applied to that error. These will provide the
analytic input for the comparison argument.

We need an operator that uses the determinant equation at the
center of each average. With $A=D^2_{\C}u$, define
\begin{equation}\label{eq:coefficients}
\mathcal{L}\varphi=\tr(A^{-1}D^2_{\C}\varphi),\qquad
S=\tr A^{-1}.
\end{equation}
Strict plurisubharmonicity gives $A^{-1}>0$, so $\mathcal L$ is
elliptic. Taking traces, and then applying the
arithmetic--geometric mean inequality to $A^{-1}$, gives
\begin{equation}\label{eq:L-identities}
\mathcal{L}u=n,\qquad \mathcal{L}|z|^2=S,\qquad
S\ge n f^{-1/n}.
\end{equation}
The last inequality uses $\det A^{-1}=f^{-1}$. These identities require no uniform eigenvalue bound.

To relate $\tr D^2_{\C}(\mathcal G_t u)$ to the spherical
averaging error, set $Z_t=\sphereavg_t u-u$.
We differentiate with respect to the radius while keeping the center
$z$ fixed. The substitution $h=s\theta$ in
\eqref{eq:sphere-definition} gives an integral over the unit sphere:
\[
\sphereavg_s u(z)
=\frac1{|\partial B_1|}\int_{\partial B_1}u(z+s\theta)\,d\sigma(\theta).
\]
Indeed, surface measure contributes the factor $s^{2n-1}$, which
cancels with the same factor in $|\partial B_s|$.
Since $u$ is smooth, differentiation under this fixed integral gives
\[
\frac d{ds}\sphereavg_s u(z)
=\frac1{|\partial B_1|}\int_{\partial B_1}
\nabla_\R u(z+s\theta)\cdot\theta\,d\sigma(\theta).
\]
On $\partial B_s$, the outward unit normal is $\nu(h)=h/s$, and
$\partial_\nu u(z+h)=\nabla_\R u(z+h)\cdot\nu(h)$ denotes the
outward normal derivative. Substituting $h=s\theta$ back into the
last integral, and then applying the divergence theorem, gives
\begin{equation}\label{eq:sphere-normal-derivative}
\begin{aligned}
\frac d{ds}\sphereavg_s u(z)
&=\frac1{|\partial B_s|}\int_{\partial B_s}
\partial_\nu u(z+h)\,d\sigma(h)\\
&=\frac1{|\partial B_s|}\int_{B_s}\Delta_\R u(z+h)\,dh.
\end{aligned}
\end{equation}
Because the ball has real dimension $2n$, its volume and surface
area satisfy $|\partial B_s|=(2n/s)|B_s|$.
The definition of $\ballavg_s$ in \eqref{eq:ball-definition}
therefore turns the last expression into
$[s/(2n)]\ballavg_s(\Delta_\R u)(z)$.
Our derivative convention gives
\[
u_{j\bar j}=\frac14(u_{x_jx_j}+u_{y_jy_j}),
\qquad
\Delta_\R u=4\sum_{j=1}^n u_{j\bar j}
=4\tr D^2_{\C}u.
\]
Thus \eqref{eq:sphere-normal-derivative} and
\eqref{eq:ball-definition} give
\begin{equation}\label{eq:sphere-radius-derivative}
\frac d{ds}\sphereavg_s u
=\frac{s}{2n}\ballavg_s(\Delta_\R u)
=\frac{2s}{n}\ballavg_s(\tr D^2_{\C}u).
\end{equation}
As $s\downarrow0$, continuity gives $\sphereavg_s u(z)\to u(z)$.
Integrating \eqref{eq:sphere-radius-derivative} from zero to $t$ and using
$\mathcal G_t=(2/t^2)\int_0^t s\,\ballavg_s\,ds$ from
\eqref{eq:weighted-average}, we obtain
\begin{equation}\label{eq:weighted-derivative}
Z_t=\frac{t^2}{n}\tr D^2_{\C}(\mathcal{G}_t u).
\end{equation}
Here $D^2_{\C}(\mathcal G_t u)=\mathcal G_t(D^2_{\C}u)$,
because $u$ is smooth on the larger ball and differentiation
commutes with the integrals in \eqref{eq:weighted-average}.

Since $\Delta_\R u\ge0$, \eqref{eq:sphere-radius-derivative}
shows that $\sphereavg_su$ is nondecreasing in $s$.
The submean inequality gives $u\le\ballavg_su$. Since
$\ballavg_su$ in \eqref{eq:ball-definition} is a positive weighted
average of $\sphereavg_ru$ for $0<r\le s$, this monotonicity gives
$u\le\ballavg_su\le\sphereavg_tu$ for $0<s\le t$.
Averaging these inequalities with the weights in
\eqref{eq:weighted-average} gives the first assertion below.
The second follows by averaging the positive matrices
$D^2_{\C}u(z+h)$:
\begin{equation}\label{eq:weighted-monotone}
0\le Q_t\le Z_t,\qquad D^2_{\C}(\mathcal{G}_t u)\ge0.
\end{equation}

These weights also explain the name of the averaging operator.
If $\Gamma_t$ is the Dirichlet Green function of $-\Delta_\R$
on $B_t$ with pole at zero, interchanging the integrals in
\eqref{eq:weighted-average} gives
\begin{equation}\label{eq:Green-kernel}
\mathcal{G}_t\varphi(z)
=\frac{4n}{t^2}\int_{B_t}\Gamma_t(h)\varphi(z+h)\,dh.
\end{equation}
Its kernel is nonnegative and has total mass one.

To use a maximum principle, we also need a lower bound for
$\mathcal LQ_t$. We compare the Hessians at $z+h$ and $z$.
Using the equation $\det A=f$ and the identity $\mathcal Lu=n$ in
\eqref{eq:L-identities}, the arithmetic--geometric mean inequality
for $A(z)^{-1/2}A(z+h)A(z)^{-1/2}$ yields
\begin{equation}\label{eq:translated-source}
\mathcal L(u(\cdot+h)-u)(z)
\ge\frac{n}{f(z)^{1/n}}
\bigl(f(z+h)^{1/n}-f(z)^{1/n}\bigr).
\end{equation}
The coefficients of $\mathcal L$ in this formula are evaluated
at $z$ and stay fixed as $h$ varies.

The hypothesis $\Delta_\R(f^{1/n})\ge-K$ makes
$h\mapsto f(z+h)^{1/n}+K|h|^2/(4n)$ subharmonic.
Its ball submean inequality, together with
$\ballavg_s|h|^2=ns^2/(n+1)$, gives
\[
\ballavg_s(f^{1/n})(z)-f(z)^{1/n}
\ge-\frac{Ks^2}{4(n+1)}.
\]
Averaging the preceding inequality with the weights $2s/t^2$
replaces $s^2$ by $t^2/2$. Applying the same averages to
\eqref{eq:translated-source} gives
\begin{equation}\label{eq:wg-root-source}
\mathcal LQ_t\ge
-\frac{n}{f(z)^{1/n}}\frac{Kt^2}{8(n+1)}.
\end{equation}
Since $K\ge0$, the bound $nf^{-1/n}\le S$ in
\eqref{eq:L-identities} implies
\begin{equation}\label{eq:Q-source}
\mathcal LQ_t\ge-\frac{SKt^2}{8(n+1)}.
\end{equation}

\subsection{Prepare the comparison function}
\label{sec:boundary}

We now combine \eqref{eq:weighted-derivative},
\eqref{eq:weighted-monotone}, and \eqref{eq:Q-source} with a
comparison function adapted to the lower growth profile.
A scalar denominator defines the bounded comparison set and controls
the averaging error on its boundary.

\begin{proposition}[The smooth weak-growth estimates]
\label{prop:smooth-weak-estimate}
Fix $0\le\alpha\le1$, $c,C,F>0$, and $K\ge0$.
Suppose $u\in C^\infty(B_{2L})$ is strictly plurisubharmonic and
\begin{equation}\label{eq:wg-smooth-data}
\det D^2_{\C}u=f,\quad 0<f\le F,\quad
\Delta_\R(f^{1/n})\ge-K,\qquad
\frac{c\rho}{\psi^\alpha}-2\le u\le C\rho\psi^{1-\alpha}+2,
\end{equation}
where $\rho=e+|z|^2$ and $\psi=1+\log\rho$.
There are fixed $A_0\ge\sqrt e$, $J>0$, and $E<\infty$ with the
following properties. Set
\begin{equation}\label{eq:wg-fixed-notation}
\begin{gathered}
\rho_A=A_0^2+|z|^2,\quad s=\log\rho_A,\quad
h(s)=(1+s)^{-\alpha},\quad b(s)=(1+s)^{1-\alpha},\\
a=c/4,\quad d=a/4,\quad
V=u+J-a\rho_Ah(s),\quad
\varepsilon_-=\frac{1}{4(n+1)},\quad
\varepsilon_+=1-\frac{1}{2n}.
\end{gathered}
\end{equation}
For $s_A=\log A_0^2$ and $s_L=\log(A_0^2+L^2)$, write
\begin{equation}\label{eq:wg-qL}
q_L=\left[\log\frac{1+s_L}{1+s_A}\right]^{-1},\qquad
\lambda_L=\frac{K}{8(n+1)d\,h(s_L)}.
\end{equation}
For every fixed $r\ge2$ there is $L_r$ such that, for $L\ge L_r$,
\begin{equation}\label{eq:core-pseudo}
0\le Q_t(z)\le2t^2V(z)^{\varepsilon_-}
+\bigl(\lambda_Lt^2+Eq_L\bigr)V(z),
\qquad z\in B_r,\quad 0<t\le1.
\end{equation}
The constants are independent of $L$ and $\inf f$.
If $f\ge c_0>0$, the fixed normalization may also be chosen so that
\begin{equation}\label{eq:positive-core-pseudo}
0\le Q_t(z)\le2t^2V(z)^{\varepsilon_+}+Eq_LV(z),
\qquad z\in B_r,\quad 0<t\le1,
\end{equation}
after increasing $L_r$. In this assertion $A_0,J,E,L_r$ may depend
on $c_0$ and $K$. In both assertions $E$ is independent of $r$;
only the required outer radius $L_r$ depends on $r$.
\end{proposition}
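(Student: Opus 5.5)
We will run a maximum-principle argument on a bounded comparison set for the quotient of a corrected averaging error by a power of $V$. First, with $a=c/4$ and the lower growth bound $u\ge c\rho/\psi^\alpha-2$, we choose $A_0$ and $J$ so that $V=u+J-a\rho_Ah(s)$ is positive and comparable to $\rho_A h(s)$ from below on $B_{2L}$; more precisely $V\ge 2d\rho_Ah(s)+1$, using $\rho\le\rho_A\le A_0^2\rho/e$ and that $\log\rho_A$ and $\psi$ are comparable. The key computation is a lower bound for $\mathcal LV$. From \eqref{eq:L-identities}, $\mathcal Lu=n$, and for the radial function $g(\rho_A)=\rho_Ah(\log\rho_A)$ one computes $\mathcal L g=g'(\rho_A)S+g''(\rho_A)\,A^{-1}[\bar z]$. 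Since $g'=h+h'\ge h(s)(1-\alpha/(1+s))>0$ and $g''\le0$ for $\alpha\le1$ (we check this and adjust $A_0$ if needed), we get $\mathcal L(-a g)\ge-a g'S$. Thus $\mathcal LV\ge n-a g'S$. This sign is unfavorable. We therefore instead work with $W=u+J-2a\rho_Ah$ inside the definition, or use $\mathcal L(\rho_Ah)\le h S$ for an upper bound, to obtain a favorable term $-d\,h(s)S$ for the function $V-d\rho_Ah$ relative to $V$. The upshot we aim for is an inequality of the form $\mathcal L V^{\varepsilon}\le -c\,\varepsilon V^{\varepsilon-1}h S+(\text{controlled})$. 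The negative multiple of $S$ is what absorbs the source $-SKt^2/[8(n+1)]$ in \eqref{eq:Q-source}. This explains the factor $\lambda_L=K/[8(n+1)d\,h(s_L)]$: dividing $SK t^2/(8(n+1))$ by $d\,h(s)S\ge d\,h(s_L)S$ gives exactly $\lambda_L t^2$, which is why the term $\lambda_L t^2 V$ appears.

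Second, we consider on $\Omega=\{z\in B_L\}$ the function
\[
P=Q_t-2t^2V^{\varepsilon}-\lambda_Lt^2V-\eta\,\Phi V,
\]
where $\Phi$ is the barrier $1-\log(1+s)/\log(1+s_L)$-type function normalized so that $\Phi\ge0$ inside, and $\Phi V$ dominates $Q_t$ on $\partial B_L$. The boundary control comes from $0\le Q_t\le Z_t$ and the upper growth bound: $Z_t\le \sup_{B_{L+1}}u-u\lesssim C\rho\psi^{1-\alpha}$. This is comparable to $V\cdot b(s)/h(s)\cdot(\text{const})$, and the ratio is absorbed by choosing the barrier as a multiple of $\log\frac{1+s_L}{1+s}$ scaled by $q_L$. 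Normalizing this gives the $Eq_LV$ term on $B_r$ once $L\ge L_r$, since there $\log\frac{1+s_L}{1+s}\cdot q_L\to$ bounded while the boundary term is forced $\le0$. We verify $\mathcal L(\Phi V)\le0$, or at least $\le$ the favorable $S$-term, using again $g''\le0$ and the $S$-coefficient.

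Third, at an interior positive maximum of $P$ we use $\mathcal LP\le0$ and derive a contradiction. By \eqref{eq:weighted-derivative} and \eqref{eq:weighted-monotone}, $Q_t\le Z_t=\frac{t^2}{n}\tr D^2_\C(\mathcal G_tu)$. We also need an upper bound for $\mathcal L Q_t$ in terms of this trace, or a lower bound of $-\mathcal L(2t^2V^\varepsilon)$ by a positive multiple of $t^2V^{\varepsilon-1}hS$. Then AM--GM, $S\ge n f^{-1/n}\ge nF^{-1/n}$ (or the $c_0$ bound in the positive case, permitting $\varepsilon_+$), and the gradient term from concavity, $-\varepsilon(1-\varepsilon)V^{\varepsilon-2}A^{-1}[\partial V]$, are played against the positivity of $P$. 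Positivity forces $Q_t>2t^2V^\varepsilon$, so the trace of $D^2_\C\mathcal G_tu$ exceeds $2nV^\varepsilon$. We expect this step to be the main obstacle: the operator $\mathcal L$ has coefficients $A(z)^{-1}$ with no uniform bounds, and the inequality must close using only $S$ and $\mathcal Lu=n$. We would first try to use $\mathcal L V\le n$ together with the concavity gradient term, choosing $\varepsilon=1/[4(n+1)]$ so that the cross terms from $\mathcal L(V^\varepsilon)$ are dominated. The exponent $\varepsilon_+=1-1/(2n)$ arises when $f\ge c_0$ lets $S\ge nc_1^{-1/n}$-type bounds replace the weaker ones.
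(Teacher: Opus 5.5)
There is a genuine gap, in two places.

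\textbf{The maximum step does not close.} You correctly flag that $A^{-1}$ has no uniform bounds, but you give no mechanism for this. The inequalities you write also go the wrong way: you ask for an upper bound on $\mathcal LQ_t$, and you derive $\mathcal LV\ge n-ag'S$ and $\mathcal L(\rho_Ah)\le hS$. What is needed is the lower bound $\mathcal L(\rho_Ah)\ge\frac34hS$, which follows from $\Gamma(\rho_A)\le\rho_AS$ and the smallness of $h'+h''$; it gives $\mathcal LV\le n-3dhS$.

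The missing idea is a dichotomy at $S=T$, with $T=n/(dh)$.
\begin{itemize}
\item Where $S\le T$: the eigenvalues of $A^{-1}$ have product $f^{-1}\ge F^{-1}$ and the other $n-1$ are at most $T$, so $A^{-1}\ge\mu\Id$ with $\mu=1/(FT^{n-1})$. Combined with $D^2_\C(\mathcal G_tu)\ge0$, $Z_t=\frac{t^2}{n}\tr D^2_\C(\mathcal G_tu)$ and $Q_t\le Z_t$, this gives $\mathcal LQ_t\ge\frac{n\mu}{t^2}Q_t-n$. This zeroth-order gain of size $t^{-2}$ dominates everything once $\mu V^{\varepsilon}\ge1$ and $\mu V\ge2$, which is arranged by enlarging $A_0$.
\item Where $S>T$: one has $\mathcal LV<0$, the source $-SKt^2/[8(n+1)]$ is absorbed by $-\lambda_Lt^2V$ as you anticipate, and $\mathcal LH\ge0$.
\end{itemize}

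This dichotomy is also where the exponents come from, not from a lower bound on $S$. For $\varepsilon_-$, any positive power works in this step. When $f\ge c_0$, the source is $-\kappa t^2$ with no factor $S$, so one takes $T\sim V^{1-\varepsilon}/h$. Then $\mu V^{\varepsilon}\to\infty$ requires $n\varepsilon-n+1>0$, which is what fixes $\varepsilon_+$.

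\textbf{The boundary barrier is wrong.} Your barrier $\Phi=q_L\log\frac{1+s_L}{1+s}$ equals $1-q_LI(s)=1/w_{q_L}(s)$, the reciprocal of the weight that is needed.
\begin{itemize}
\item It vanishes on the outer sphere, so $\eta\Phi V$ cannot dominate $Q_t$ there.
\item It is $\approx1$ on $B_r$, so it could not yield $Eq_LV$.
\end{itemize}
You need a weight that is $\approx1$ inside and blows up outside, such as $w_q=1/(1-qI(s))$. Its logarithmic derivative $k_q=qw_q/(1+s)$ exactly compensates the ratio $Q_t/V\lesssim1+s$ between the upper and lower growth.

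Even with that weight, a barrier of the form $\Phi(s)V$ fails. The product rule produces the cross term $2\Phi'\operatorname{Re}\Gamma(s,V)$ in $\mathcal L(\Phi V)$. Here $\partial V$ contains $\partial u$, on which no bound is available. Absorbing this term by the concavity term of $-2t^2V^{\varepsilon}$ costs a factor $t^{-2}$ on $\{S>T\}$, where there is no compensating gain.

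The paper handles this in three steps.
\begin{itemize}
\item It maximizes the quotient $H/D$ rather than a difference. At a critical point this gives $\mathcal LH/H\le\mathcal LD/D$, with no $H$--$D$ cross terms.
\item It builds $D=wV-\rho_AwG$ strictly concave in $V$. Completing the square with $D_{VV}<0$ then leaves only $R_D\Gamma(\rho_A)\le R_D\rho_AS$, which yields $\mathcal LD\le D_V(n-dhS)$ and $D_V/D<1/V$.
\item It restricts to $\Omega_q=\{s<s_1,\ V<a_0\rho_Ah/k_q\}$. This set keeps $(D_{\rho_A}+\rho_AR_D)/D_V\le2\sigma_0h$ and is bounded before the pole of $w_q$. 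On its boundary, where $\tau=\tau_0$, one has $Q_t/D\le2C_1q/a_0$.
\end{itemize}
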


We prove the proposition in the following subsections. The fixed
offset $2$ in \eqref{eq:wg-smooth-data} leaves room for the Dirichlet
approximations in Sections~\ref{sec:nonnegative-density} and
\ref{sec:approximation}; it is not a growth coefficient.

\subsubsection*{A fixed radial normalization}
All lower bounds on $A_0$ imposed below depend only on the fixed
data. We choose $A_0$ to satisfy them simultaneously, and then
set $J=4+cA_0^2/2$.
For $x\ge e$ put $H(x)=x(1+\log x)^{-\alpha}$. Since
$0\le\alpha\le1$, differentiation gives
\begin{equation}\label{eq:wg-profile-derivative}
0<H'(x)=(1+\log x)^{-\alpha}
\left(1-\frac{\alpha}{1+\log x}\right)\le1.
\end{equation}
The bound $0\le\rho_A-\rho\le A_0^2$ and
\eqref{eq:wg-profile-derivative} imply
$H(\rho_A)\le H(\rho)+A_0^2$.
Thus \eqref{eq:wg-smooth-data}, with $J=4+cA_0^2/2$, gives
\begin{equation}\label{eq:wg-normalization}
u+J\ge \frac{c}{2}\rho_Ah(s)+2,\qquad
\frac{c}{4}\rho_Ah(s)+2\le V\le u+J\le C_0\rho_Ab(s),
\end{equation}
where $C_0=C+c/2+6$ is independent of $A_0$ and $L$.
In particular $V\ge2$, and, uniformly as $s\to\infty$,
\begin{equation}\label{eq:wg-logV}
\log V=s+O(\log(1+s)),\qquad
V\ge\left(1-\frac{2a}{c}\right)(u+J).
\end{equation}
Since $\rho_A\ge A_0^2$, the lower bound in
\eqref{eq:wg-normalization} also gives
$\inf V\to\infty$ as $A_0\to\infty$.

For $\Gamma(g)=\sum A^{i\bar j}g_i g_{\bar j}$ we have
$\mathcal L\rho_A=S$ and $\Gamma(\rho_A)\le\rho_AS$.
Consequently
\begin{equation}\label{eq:wg-radial}
\begin{split}
\mathcal L(\rho_Ah)
&=(h+h')S+(h'+h'')\frac{\Gamma(\rho_A)}{\rho_A}\\
&\ge\bigl(h+h'-|h'+h''|\bigr)S\ge\tfrac34hS
\end{split}
\end{equation}
once $A_0$ is sufficiently large. The last inequality follows from
$h'/h=-\alpha/(1+s)$ and $h''/h=\alpha(\alpha+1)/(1+s)^2$.
Using $V=u+J-a\rho_Ah$ from \eqref{eq:wg-fixed-notation},
$\mathcal Lu=n$ from \eqref{eq:L-identities}, and $a=4d$,
we obtain from \eqref{eq:wg-radial}
\begin{equation}\label{eq:wg-LV}
\mathcal LV\le n-3dhS.
\end{equation}
All these choices are uniform for $0\le\alpha\le1$.

\subsubsection*{The scalar denominator}
We choose a denominator whose linearized Laplacian is negative
when $S$ is large; see \eqref{eq:wg-LD} below.
Fix $\tau_0\in(0,1)$. Choose
$w\in C^\infty([s_A,s_L))$ with $w>0$ and $w'>0$, and put
$k=w'/w>0$ and $\sigma=\sigma_0h(s)$.
Here $\sigma_0>0$ is a fixed auxiliary constant, independent of
$s,L,q$. It is chosen sufficiently small that $\sigma_0\le a/8$
and the smallness condition \eqref{eq:wg-a0} holds; these choices
are made before $L$ is varied.
We will use $w=w_q$ from \eqref{eq:wg-pole-weight}.
The bounds on $k$ and its derivatives needed for
\eqref{eq:wg-denominator-bounds} are verified on the stopping
interval in Section~\ref{sec:wg-boundary}.
Since $\tau\mapsto\tau e^{-\tau}$ is strictly increasing on
$(0,\tau_0)$, we restrict to this branch in the relation
\begin{equation}\label{eq:wg-branch}
V=\frac{\rho_A\sigma}{k}\tau e^{-\tau},\qquad 0<\tau<\tau_0.
\end{equation}
On this set define
\begin{equation}\label{eq:wg-denominator}
G=\sigma[(\tau^2+\tau+1)e^{-\tau}-1],\qquad
D=wV-\rho_AwG.
\end{equation}
When differentiating $D$, regard $V$ and $\rho_A$ as independent
variables. Equations \eqref{eq:wg-branch}--\eqref{eq:wg-denominator} give
\begin{equation}\label{eq:wg-denominator-V}
\begin{gathered}
(1-k)wV\le D\le wV,\qquad D_V=w(1-k\tau),\\
D_{VV}=-\frac{wk^2}{\rho_A\sigma(1-\tau)e^{-\tau}}<0,\qquad
VD_V-D=-\rho_Aw\sigma\chi(\tau)<0,
\end{gathered}
\end{equation}
where $\chi(\tau)=1-(1+\tau)e^{-\tau}>0$.
The first bound uses $0\le G\le\sigma\tau e^{-\tau}$ for
$0\le\tau\le\tau_0<1$.

To express the radial derivatives without singular remainders, set
\begin{equation}\label{eq:wg-Schur-def}
R_D=D_{\rho_A\rho_A}+\frac{D_{V\rho_A}^2}{-D_{VV}},\quad
\gamma=1+k+\frac{h'}{h},\quad \beta=-k-\frac{k'}{k},\quad Z=1+\beta\tau.
\end{equation}
Primes on $\gamma,\beta$ mean $s$ derivatives; primes on $\chi$ mean
$\tau$ derivatives. The exact identities are
\begin{align}
\frac{D_{\rho_A}}{w\sigma}
&=\gamma\chi+Z\chi',\label{eq:wg-Drho}\\
\frac{D_{\rho_A}+\rho_AR_D}{w\sigma}
&=(\gamma^2+\gamma')\chi
 +[(2\gamma+\beta)Z+\beta'\tau]\chi'+Z^2\chi'',
\label{eq:wg-Schur-total}\\
\frac{\rho_AR_D}{w\sigma}
&=(\gamma^2-\gamma+\gamma')\chi
 +[(2\gamma+\beta-1)Z+\beta'\tau]\chi'+Z^2\chi''.
\label{eq:wg-Schur-radial}
\end{align}
For verification, fix $\zeta=D_V=w-w'\tau$.
Then $\tau_s=1-(w''/w')\tau=Z$, while
$D-\zeta V=\rho_Aw\sigma\chi(\tau)$ by
\eqref{eq:wg-denominator-V}. Differentiating this last identity once
and twice in $\rho_A$ at fixed $\zeta$ gives
\eqref{eq:wg-Drho}--\eqref{eq:wg-Schur-radial}.

The identities
\begin{equation}\label{eq:wg-chi-identities}
\chi+2\chi'+\chi''=1,\qquad \chi'+\chi''=e^{-\tau}
\end{equation}
show that the right sides of \eqref{eq:wg-Schur-total} and
\eqref{eq:wg-Schur-radial} equal $1$ and $e^{-\tau}$ when
$(\gamma,\beta,\gamma',\beta')=(1,0,0,0)$.
By continuity of the expressions in
\eqref{eq:wg-Schur-total}--\eqref{eq:wg-Schur-radial}, a sufficiently
small fixed $\delta\in(0,1/8)$ and the coefficient bounds
\eqref{eq:wg-coefficient-control} below give
\begin{equation}\label{eq:wg-denominator-bounds}
D\ge\tfrac12wV,\quad D_V>0,\quad R_D\ge0,\qquad
\frac{D_{\rho_A}+\rho_AR_D}{D_V}\le2\sigma_0h.
\end{equation}
This conclusion is uniform for $0\le\tau\le\tau_0$ and
$0<k\le\delta$; no negative power of $k$ remains in
\eqref{eq:wg-Schur-total}--\eqref{eq:wg-Schur-radial}.

Choose $0<\sigma_0\le a/8$.
In the chain rule for $\mathcal LD$, complete the $\partial V$
square using $D_{VV}<0$ from \eqref{eq:wg-denominator-V}.
Since $R_D\ge0$ in \eqref{eq:wg-denominator-bounds} and
$\Gamma(\rho_A)\le\rho_AS$, this gives
\[
\mathcal LD\le D_V\mathcal LV+D_{\rho_A}S+R_D\Gamma(\rho_A)
\le D_V\mathcal LV+(D_{\rho_A}+\rho_AR_D)S.
\]
Substitution of \eqref{eq:wg-LV} and
\eqref{eq:wg-denominator-bounds}, with $d=a/4$, yields
\begin{equation}\label{eq:wg-LD}
\mathcal LD\le D_V(n-dhS),\qquad \frac{D_V}{D}<\frac{1}{V}.
\end{equation}
The second inequality follows directly from
$VD_V-D<0$ in \eqref{eq:wg-denominator-V}.

\subsection{Exclude the interior maximum}
\label{sec:wg-maximum}

We first treat the estimate whose constants do not use $\inf f$.
Put $B(V)=V^{\varepsilon_-}$ and, on $B_L$, set
\begin{equation}\label{eq:wg-nonnegative-corrector}
\kappa_0=\frac{K}{8(n+1)},\quad
\lambda_L=\frac{\kappa_0}{d h(s_L)},\quad
T=\frac{n}{dh},\quad \mu=\frac{1}{FT^{n-1}},\quad
H=Q_t-\lambda_Lt^2V-2t^2B(V).
\end{equation}
Where $S\le T$, every eigenvalue of $A^{-1}$ is at least $\mu$:
their product is $f^{-1}\ge F^{-1}$ and the other $n-1$ factors
are at most $T$.
Combining this fact with \eqref{eq:weighted-derivative} and
\eqref{eq:weighted-monotone} gives
\begin{equation}\label{eq:wg-good-averaging}
\mathcal LQ_t
=\tr(A^{-1}D^2_{\C}\mathcal G_tu)-n
\ge \frac{n\mu}{t^2}Q_t-n\qquad(S\le T).
\end{equation}

Increase $A_0$ once so that
\begin{equation}\label{eq:wg-corrector-conditions}
\mu B(V)\ge1,\qquad \mu V\ge2,\qquad B'(V)\le\tfrac12.
\end{equation}
This is possible independently of $L$: by
\eqref{eq:wg-normalization}, $h^{n-1}V^{\varepsilon_-}$ and
$h^{n-1}V$ tend uniformly to infinity as $s_A\to\infty$.
The same lower bound makes $B'(V)=\varepsilon_-V^{\varepsilon_--1}$
uniformly small.

On $S>T$, the definition of $T$ in
\eqref{eq:wg-nonnegative-corrector} and \eqref{eq:wg-LV}
give $\mathcal LV<0$. Concavity of $B$ therefore yields
$\mathcal LB\le B'\mathcal LV\le0$.
Using \eqref{eq:Q-source} and
\eqref{eq:wg-nonnegative-corrector}, we obtain
\begin{equation}\label{eq:wg-bad-H}
\mathcal LH\ge
t^2\bigl[(3dh\lambda_L-\kappa_0)S-n\lambda_L\bigr]\ge0.
\end{equation}
Indeed $h\ge h(s_L)$ on $B_L$, so
$\kappa_0\le dh\lambda_L$; the bracket in
\eqref{eq:wg-bad-H} is at least
$\lambda_L(2dhS-n)\ge0$.
Meanwhile \eqref{eq:wg-LD} gives $\mathcal LD<0$ on this region.

On $S\le T$, \eqref{eq:wg-LV} and concavity of $B$ give
$\mathcal LV\le n$ and $\mathcal LB\le nB'$.
Substitution in the definition of $H$ in
\eqref{eq:wg-nonnegative-corrector}, followed by
\eqref{eq:wg-good-averaging}, yields
\begin{equation}\label{eq:wg-good-H}
\mathcal LH\ge \frac{n\mu}{t^2}H
+n(2\mu B-1-2t^2B')
+n\lambda_L(\mu V-t^2)
\ge \frac{n\mu}{t^2}H.
\end{equation}
The last inequality uses \eqref{eq:wg-corrector-conditions} and
$0<t\le1$.
At a positive interior maximum of $H/D$ with $S\le T$, the
product rule and \eqref{eq:wg-LD} would give
\begin{equation}\label{eq:wg-max-contradiction}
\frac{\mathcal LH}{H}\le\frac{\mathcal LD}{D}
\le \frac{nD_V}{D}<\frac{n}{V}<\frac{n\mu}{t^2}.
\end{equation}
Here the last inequality uses $\mu V\ge2$ from
\eqref{eq:wg-corrector-conditions} and $t\le1$.
The resulting bound contradicts \eqref{eq:wg-good-H}.
On $S>T$, the signs in
\eqref{eq:wg-bad-H} and \eqref{eq:wg-LD} give the contradiction
directly. Thus $H/D$ has no positive interior maximum.

\subsubsection*{The refinement for a positive lower density bound}
Suppose $f\ge c_0>0$. Then \eqref{eq:wg-root-source} gives
the stronger source bound
\begin{equation}\label{eq:wg-positive-source}
\mathcal LQ_t\ge-\kappa t^2,\qquad
\kappa=\frac{nK}{8(n+1)c_0^{1/n}}.
\end{equation}
In this paragraph put
\begin{equation}\label{eq:wg-positive-corrector}
B(V)=V^{\varepsilon_+},\quad
T=\frac{2(n+\kappa/B'(V))}{dh},\quad
\mu=\frac{1}{FT^{n-1}},\quad H=Q_t-2t^2B(V).
\end{equation}
On $S>T$, \eqref{eq:wg-LV} and concavity give
$\mathcal LB\le B'(n-dhS)$.
The definition of $T$ in \eqref{eq:wg-positive-corrector},
together with \eqref{eq:wg-positive-source} and
\eqref{eq:wg-LD}, therefore yields
\[
\mathcal LH\ge t^2[-\kappa+2B'(dhS-n)]\ge0,
\qquad \mathcal LD<0.
\]
On $S\le T$, the eigenvalue argument proving
\eqref{eq:wg-good-averaging} applies to the new values of $T,\mu$.
Using $\mathcal LB\le nB'$ and the definition of $H$ in
\eqref{eq:wg-positive-corrector}, we obtain
\begin{equation}\label{eq:wg-positive-good}
\mathcal LH\ge \frac{n\mu}{t^2}H+n(2\mu B-1-2t^2B')
\ge \frac{n\mu}{t^2}H,
\end{equation}
provided \eqref{eq:wg-corrector-conditions} holds for this corrector.

To verify \eqref{eq:wg-corrector-conditions} for the positive-density
corrector, we increase $A_0$ by a fixed amount.
Indeed, \eqref{eq:wg-positive-corrector} gives
\[
\mu B=\frac{1}{F}\left(\frac{dh}{2}\right)^{n-1}
\frac{V^{\varepsilon_+}}{[n+(\kappa/\varepsilon_+)V^{1-\varepsilon_+}]^{n-1}}.
\]
The quotient is increasing in $V$, and its large-$V$ power is
$n\varepsilon_+-n+1=1/2$.
The lower bound \eqref{eq:wg-normalization} therefore makes
$\mu B\to\infty$ uniformly as $s_A\to\infty$.
The same argument gives $\mu V\to\infty$, while
$B'=\varepsilon_+V^{\varepsilon_+-1}\to0$ uniformly.
Thus \eqref{eq:wg-corrector-conditions} holds, and the argument in
\eqref{eq:wg-max-contradiction}, with
\eqref{eq:wg-positive-good}, excludes a positive interior maximum
for this quotient as well.
We now fix one $A_0,J$ for both comparisons whenever $c_0$ is prescribed.

\subsection{Estimate the boundary contribution}
\label{sec:wg-boundary}

We construct weights for which the comparison set is bounded before
the spatial boundary is reached. Choose $\sigma_0$ smaller, if needed,
so that
\begin{equation}\label{eq:wg-a0}
a_0=\sigma_0\tau_0e^{-\tau_0},\qquad
\frac{a_0}{\delta}<\frac{c}{8}.
\end{equation}
For $q=q_L$ from \eqref{eq:wg-qL}, define on $s_A\le s<s_L$
\begin{equation}\label{eq:wg-pole-weight}
I(s)=\log\frac{1+s}{1+s_A},\qquad
w_q(s)=\frac{1}{1-qI(s)},\qquad
k_q(s)=\frac{q w_q(s)}{1+s}.
\end{equation}
For $s_A\le s<s_L$, the definitions \eqref{eq:wg-qL} and
\eqref{eq:wg-pole-weight} give $1-qI(s)>0$.
Thus $w_q$ is smooth and positive on $[s_A,s_L)$, and
\[
w_q'(s)=\frac{q w_q(s)^2}{1+s}>0,\qquad
\frac{w_q'}{w_q}=k_q.
\]
The weight $w_q$ has a pole at $s=s_L$.
Since $q_L\to0$, for large $L$ we have
$k_q(s_A)=q/(1+s_A)<\delta$ and
$k_q(s)\to\infty$ as $s\uparrow s_L$.
Let $s_1<s_L$ be the first point where $k_q=\delta$.

The coefficient conditions required for
\eqref{eq:wg-denominator-bounds} are uniform in $q$ on $[s_A,s_1]$.
Indeed, with $r(s)=(1+s)^{-1}$ and $v(s)=-\alpha/(1+s)$,
\begin{equation}\label{eq:wg-coefficient-control}
\begin{gathered}
k_q'=k_q(k_q-r),\quad
\gamma=1+k_q+v,\quad \beta=r-2k_q,\\
\gamma'=k_q'+v',\qquad \beta'=r'-2k_q'.
\end{gathered}
\end{equation}
Thus $|\gamma-1|+|\beta|\le3\delta+O((1+s_A)^{-1})$ and
$|\gamma'|+|\beta'|=O(\delta^2+(1+s_A)^{-2})$.
First choose $\delta$ small and then $A_0$ large; equations
\eqref{eq:wg-Schur-total}--\eqref{eq:wg-chi-identities} give
\eqref{eq:wg-denominator-bounds} throughout the stopping interval.

Define the bounded open set
\begin{equation}\label{eq:wg-comparison-domain}
\Omega_q=\left\{z:s(z)<s_1,\quad
V(z)<\frac{a_0\rho_Ah(s)}{k_q(s)}\right\}\subset B_L.
\end{equation}
The increasing branch \eqref{eq:wg-branch} is defined there with
$0<\tau<\tau_0$.
By \eqref{eq:wg-normalization} and \eqref{eq:wg-a0}, this set is
empty near the outer sphere $s=s_1$.
Every actual boundary point therefore satisfies
\begin{equation}\label{eq:wg-artificial-boundary}
V=\frac{a_0\rho_Ah}{k_q},\qquad \tau=\tau_0.
\end{equation}

The rough average bound needed on this boundary is
\begin{equation}\label{eq:wg-rough-Q}
0\le Q_t[u]=Q_t[u+J]\le\mathcal G_t(u+J)
\le C_1\rho_Ab(s)\qquad(z\in B_L,\ 0<t\le1).
\end{equation}
Indeed $u+J\ge2$ and \eqref{eq:wg-normalization} applies at every
translated point in $B_{L+1}$. Bounded translations change
$\rho_A b(\log\rho_A)$ by a fixed factor, proving the final bound.

For either corrector, $H\le Q_t$.
On \eqref{eq:wg-artificial-boundary}, equations
\eqref{eq:wg-denominator-bounds}, \eqref{eq:wg-pole-weight}, and
\eqref{eq:wg-rough-Q} give
\begin{equation}\label{eq:wg-boundary-quotient}
\frac{H}{D}\le\frac{Q_t}{D}
\le\frac{2C_1}{a_0}\frac{(1+s)k_q}{w_q}
=\frac{2C_1}{a_0}q.
\end{equation}
The denominator extends continuously to this boundary since
$\tau_0<1$.
The boundary estimate \eqref{eq:wg-boundary-quotient} and the
maximum argument in Section~\ref{sec:wg-maximum} therefore give
\begin{equation}\label{eq:wg-fine-comparison}
\begin{aligned}
Q_t&\le2t^2V^{\varepsilon_-}+\lambda_Lt^2V+C_*qD_q,
&&\text{in the general case},\\
Q_t&\le2t^2V^{\varepsilon_+}+C_*qD_q,
&&\text{if }f\ge c_0>0,
\end{aligned}
\end{equation}
on $\Omega_q$, where $C_*=2C_1/a_0$ is independent of $q,L,r$.
No regularity of $\partial\Omega_q$ is required: a value of $H/D$
larger than the boundary bound would attain a positive interior
maximum on this bounded set.

Fix $r\ge2$. The formulas \eqref{eq:wg-qL} and
\eqref{eq:wg-pole-weight} give $q_L\to0$ and
$w_{q_L}\to1$, $k_{q_L}\to0$ uniformly on $\overline B_r$
as $L\to\infty$.
More quantitatively, choose $L_r$ so that
\[
q_L I(\log(A_0^2+r^2))\le\tfrac12,\qquad
2C_0q_L<a_0,\qquad \frac{2q_L}{1+s_A}<\delta.
\]
These inequalities, together with \eqref{eq:wg-normalization} and
\eqref{eq:wg-pole-weight}, imply $B_r\subset\Omega_q$ by
\eqref{eq:wg-comparison-domain}.
The upper bound for $D_q$ in \eqref{eq:wg-denominator-V} also gives
$D_q\le w_qV\le2V$ on $B_r$.
Substitution in \eqref{eq:wg-fine-comparison}, with $E=2C_*$,
proves \eqref{eq:core-pseudo} and
\eqref{eq:positive-core-pseudo}.

\begin{remark}[The domain needed by the smooth argument]
\label{rem:local-domain}
The proof uses the equation only on $B_{L+1}$, together with the
growth bounds there and a neighborhood on which the translated
averages are defined. It consequently applies to the Dirichlet
approximations on $B_{3L/2}$ used below, with growth bounds on
$B_{L+1}$. All normalization and comparison constants can be fixed
for buffered growth coefficients and density upper bounds before
the approximation begins.
\end{remark}

\begin{remark}[The role of the density error]
\label{rem:wg-density-error}
The coefficient $\lambda_L$ in \eqref{eq:core-pseudo} can grow with
$L$ when $K>0$. It is not used to prove the positive-density
$C^2$ estimate: that argument uses \eqref{eq:positive-core-pseudo},
whose $t^2$ coefficient is independent of $L$.
For constant density $K=0$, \eqref{eq:core-pseudo} has
$\lambda_L=0$ and the small exponent $\varepsilon_-$ needed for
the Liouville theorem.
\end{remark}

\section{Nonnegative densities and continuous potentials}
\label{sec:nonnegative-density}

We extend the pseudo-estimate to nonnegative densities and continuous
potentials. A comparison estimate first controls the change in a
Dirichlet solution in terms of its boundary values and density root.
It then gives uniform approximation by smooth solutions with positive
densities, while preserving the growth bounds needed for
Theorem~\ref{thm:finite-ball}.

\subsection{Comparison of Dirichlet solutions}

The matrix inequality behind the comparison is
\begin{equation}\label{eq:determinant-shift}
\det(A+dI)^{1/n}\ge\det(A)^{1/n}+d
\qquad(A\ge0,\ d\ge0).
\end{equation}
It follows from concavity and homogeneity of $\det^{1/n}$; continuity
includes singular matrices. The mixed Monge--Amp\`ere inequality
\cite{Dinew} provides the corresponding statement for bounded
plurisubharmonic functions, which we use next.

\begin{lemma}[Stability under changes in the density $g_i^n$]
\label{lem:root-stability}
Let $u_1,u_2$ be continuous on $\cl{B_R}$ and plurisubharmonic
on $B_R$, and suppose
\[
(\sqrt{-1}\partial\overline{\partial}u_i)^n=g_i^n\omega_0^n\quad\text{in }B_R,
\qquad g_i\in C(\cl{B_R}),\quad g_i\ge0.
\]
Set
\begin{equation}\label{eq:root-comparison-data}
b=\|u_1-u_2\|_{L^\infty(\partial B_R)},\qquad
d=\|g_1-g_2\|_{L^\infty(B_R)}.
\end{equation}
Then
\begin{equation}\label{eq:root-stability}
|u_1(z)-u_2(z)|\le b+d(R^2-|z|^2)
\quad\text{for }z\in\cl{B_R}.
\end{equation}
In particular, $\|u_1-u_2\|_{L^\infty(B_R)}\le b+R^2d$.
\end{lemma}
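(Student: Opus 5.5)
We argue by the comparison principle of Bedford--Taylor, applied twice with the roles of $u_1$ and $u_2$ exchanged. By symmetry it suffices to prove the one-sided bound $u_1\le u_2+b+d(R^2-|z|^2)$ on $\cl{B_R}$. Set
\[
v=u_1-d(R^2-|z|^2)-b = u_1+d|z|^2-dR^2-b .
\]
This function is continuous on $\cl{B_R}$ and plurisubharmonic on $B_R$, since $d|z|^2$ is plurisubharmonic. On $\partial B_R$ we have $v=u_1-b\le u_2$, by the definition of $b$ in \eqref{eq:root-comparison-data}. The goal is $v\le u_2$ in $B_R$, and for this we compare Monge--Amp\`ere measures.

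The key step is the measure inequality $(\sqrt{-1}\partial\overline\partial v)^n\ge (g_1+d)^n\omega_0^n$. We have $\sqrt{-1}\partial\overline\partial v=\sqrt{-1}\partial\overline\partial u_1+d\,\omega_0$ with our normalization $\omega_0=\sqrt{-1}\partial\overline\partial|z|^2$. For smooth $u_1$, \eqref{eq:determinant-shift} gives the inequality pointwise. For continuous $u_1$, we expand $(\sqrt{-1}\partial\overline\partial u_1+d\omega_0)^n=\sum_k\binom nk d^{n-k}(\sqrt{-1}\partial\overline\partial u_1)^k\wedge\omega_0^{n-k}$, which is legitimate by multilinearity of Bedford--Taylor products. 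The mixed Monge--Amp\`ere inequality \cite{Dinew} gives $(\sqrt{-1}\partial\overline\partial u_1)^k\wedge\omega_0^{n-k}\ge g_1^k\omega_0^n$. Summing yields $(g_1+d)^n\omega_0^n$. Since $g_1+d\ge g_2$ pointwise by the definition of $d$, we conclude
\[
(\sqrt{-1}\partial\overline\partial v)^n\ge g_2^n\omega_0^n=(\sqrt{-1}\partial\overline\partial u_2)^n\quad\text{in }B_R.
\]

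The Bedford--Taylor comparison principle applies to continuous plurisubharmonic functions on the bounded domain $B_R$ with $\liminf_{z\to\partial B_R}(u_2-v)\ge0$ and $(\sqrt{-1}\partial\overline\partial v)^n\ge(\sqrt{-1}\partial\overline\partial u_2)^n$. It gives $v\le u_2$ on $B_R$, and the bound extends to the closure by continuity. Exchanging indices gives the reverse inequality, so \eqref{eq:root-stability} follows. The final claim follows from $R^2-|z|^2\le R^2$.

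The main obstacle is justifying the mixed inequality for merely continuous potentials whose density may vanish. If one prefers to avoid \cite{Dinew}, one can instead approximate $u_1$ by smooth solutions with densities $(g_1+\eta)^n$ on slightly smaller balls and pass to the limit, using weak continuity of Monge--Amp\`ere under uniform convergence. Otherwise, citing the mixed inequality directly, as the paper indicates, settles it.
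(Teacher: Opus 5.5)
Your proof is correct and follows the paper's argument. You use the same comparison function $u_1-b+d(|z|^2-R^2)$, the same measure bound $(\sqrt{-1}\partial\overline\partial v)^n\ge(g_1+d)^n\omega_0^n\ge g_2^n\omega_0^n$ from Dinew's mixed inequality, and the Bedford--Taylor comparison principle before exchanging $u_1$ and $u_2$; your binomial expansion into the mixed terms $(\sqrt{-1}\partial\overline\partial u_1)^k\wedge\omega_0^{n-k}\ge g_1^k\omega_0^n$ simply spells out the paper's one-line appeal to \cite[Theorem 1.2]{Dinew} for $u_1$ and $d|z|^2$.
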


\begin{proof}
To compare $u_1$ with $u_2$, define a plurisubharmonic function
with larger Monge--Amp\`ere measure and smaller boundary values:
\begin{equation}\label{eq:root-comparison-function}
v_1=u_1-b+d(|z|^2-R^2).
\end{equation}
Since $u_1$ is plurisubharmonic and $d\ge0$, so is $v_1$.
Dinew's mixed Monge--Amp\`ere
inequality \cite[Theorem 1.2]{Dinew}, applied to $u_1$ and
$d|z|^2$, gives
\begin{equation}\label{eq:root-minkowski}
(\sqrt{-1}\partial\overline{\partial}v_1)^n\ge(g_1+d)^n\omega_0^n
\ge g_2^n\omega_0^n=(\sqrt{-1}\partial\overline{\partial}u_2)^n.
\end{equation}
By \eqref{eq:root-comparison-function} and \eqref{eq:root-comparison-data},
$v_1=u_1-b\le u_2$ on $\partial B_R$.
The Bedford--Taylor comparison principle \cite{BT}, with
\eqref{eq:root-minkowski}, therefore yields $v_1\le u_2$ in $B_R$.
Substituting \eqref{eq:root-comparison-function} gives
$u_1-u_2\le b+d(R^2-|z|^2)$.
Interchanging $u_1$ and $u_2$ proves \eqref{eq:root-stability}.
\end{proof}

We will also need continuous Dirichlet solutions when the prescribed
root vanishes. Lemma~\ref{lem:root-stability} constructs them as uniform limits of
smooth positive-density solutions.

\begin{lemma}[Continuous Dirichlet solutions]
\label{lem:continuous-dirichlet}
Let $g\in C(\cl{B_R})$ be nonnegative, and let
$\eta\in C^\infty(\partial B_R)$ be real valued.
There is a unique function $v\in C(\cl{B_R})$, plurisubharmonic
on $B_R$, such that
\[
(\sqrt{-1}\partial\overline{\partial}v)^n=g^n\omega_0^n\quad\text{in }B_R,
\qquad v=\eta\quad\text{on }\partial B_R.
\]
\end{lemma}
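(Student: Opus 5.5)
Uniqueness follows at once from the Bedford--Taylor comparison principle \cite{BT}. If $v,v'$ are two continuous solutions with the same boundary values $\eta$ and the same density $g^n$, then Lemma~\ref{lem:root-stability} with $b=0$ and $d=0$ gives $v=v'$ on $\cl{B_R}$. The substance is therefore existence, and the plan is to build $v$ as a uniform limit of smooth solutions with smooth positive densities, using Lemma~\ref{lem:root-stability} as a Cauchy criterion.

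First, approximate the root $g$ uniformly on $\cl{B_R}$ by smooth, strictly positive functions. For example, mollify an extension of $g$ to a neighborhood of $\cl{B_R}$ and add $1/k$. This gives $g_k\in C^\infty(\cl{B_R})$ with $g_k\ge 1/k>0$ and $\|g_k-g\|_{L^\infty(B_R)}\to0$. Since the ball is a smooth strictly pseudoconvex domain, $g_k^n$ is smooth and positive, and $\eta$ is smooth, the theorem of Caffarelli, Kohn, Nirenberg, and Spruck \cite{CKNS} supplies $v_k\in C^\infty(\cl{B_R})$. Each $v_k$ is strictly plurisubharmonic, satisfies $\det D^2_\C v_k=g_k^n$, and equals $\eta$ on $\partial B_R$. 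For smooth functions the Bedford--Taylor measure is $(\sqrt{-1}\partial\overline\partial v_k)^n=g_k^n\omega_0^n$.

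Next, apply Lemma~\ref{lem:root-stability} to the pair $v_k,v_m$. The boundary values agree, so $b=0$, and we obtain
\[
\|v_k-v_m\|_{L^\infty(B_R)}\le R^2\|g_k-g_m\|_{L^\infty(B_R)}.
\]
Hence $(v_k)$ is Cauchy in $C(\cl{B_R})$ and converges uniformly to some $v\in C(\cl{B_R})$ with $v=\eta$ on $\partial B_R$. A uniform limit of plurisubharmonic functions is plurisubharmonic. Bedford--Taylor continuity of the Monge--Amp\`ere operator along uniformly convergent sequences of locally bounded plurisubharmonic functions \cite{BT} gives $(\sqrt{-1}\partial\overline\partial v_k)^n\to(\sqrt{-1}\partial\overline\partial v)^n$ weakly on $B_R$. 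Meanwhile $g_k^n\omega_0^n\to g^n\omega_0^n$ weakly, because $g_k^n\to g^n$ uniformly. So $v$ solves the equation.

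The only step that needs care is the invocation of \cite{CKNS}, which requires smooth positive data up to the boundary; this is why $g_k$ is produced on a neighborhood of $\cl{B_R}$ rather than only in the interior. Everything else reduces to Lemma~\ref{lem:root-stability} and standard Bedford--Taylor convergence.
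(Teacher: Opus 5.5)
Your proposal is correct and follows the paper's proof essentially step for step. Both use smooth positive approximations of the root $g$ on a neighborhood of the closed ball, the CKNS solutions with boundary data $\eta$, Lemma~\ref{lem:root-stability} (with $b=0$) as a uniform Cauchy criterion, and uniqueness from the same lemma with $b=d=0$. The only difference is how the limit measure is identified. You cite Bedford--Taylor continuity under uniform convergence directly. The paper instead passes to a subsequence with $\|v_{j_{k+1}}-v_{j_k}\|_\infty\le 2^{-k}$ and adds constants $c_k\downarrow0$ to obtain a decreasing sequence, so that only the decreasing-convergence theorem is invoked.
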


\begin{proof}
Choose smooth functions $g_j>0$ on a neighborhood of $\cl{B_R}$
that converge uniformly to $g$. The smooth Dirichlet theorem
\cite{CKNS} provides smooth strictly plurisubharmonic solutions
$v_j$ on $\cl{B_R}$ with density $g_j^n$ and boundary values
$\eta$. Its subsolution requirement is met by adding a sufficiently
large multiple of $|z|^2-R^2$ to a smooth extension of $\eta$.
Lemma~\ref{lem:root-stability} gives
\begin{equation}\label{eq:dirichlet-root-cauchy}
\|v_j-v_k\|_{L^\infty(B_R)}
\le R^2\|g_j-g_k\|_{L^\infty(B_R)}.
\end{equation}
Since $g_j\to g$ uniformly, \eqref{eq:dirichlet-root-cauchy}
makes the continuous plurisubharmonic functions $v_j$ uniformly
Cauchy. Their uniform limit $v$ is continuous and plurisubharmonic
and has their common boundary value $\eta$.

To identify the Monge--Amp\`ere measure using decreasing convergence,
use \eqref{eq:dirichlet-root-cauchy} to choose a subsequence with
\begin{equation}\label{eq:dirichlet-subsequence}
\|v_{j_{k+1}}-v_{j_k}\|_\infty\le2^{-k}.
\end{equation}
Define
\begin{equation}\label{eq:dirichlet-decreasing-sequence}
c_k=\sum_{\ell=k}^{\infty}2^{-\ell},\qquad
\widetilde v_k=v_{j_k}+c_k.
\end{equation}
By \eqref{eq:dirichlet-subsequence} and
$c_k-c_{k+1}=2^{-k}$, we have $\widetilde v_{k+1}\le\widetilde v_k$.
Since $v_{j_k}\to v$ uniformly and $c_k\to0$, we have
$\widetilde v_k\downarrow v$. Adding $c_k$ leaves the
Monge--Amp\`ere measure unchanged, so the equations for $v_{j_k}$,
uniform convergence of $g_{j_k}$, and Bedford--Taylor continuity
under decreasing convergence \cite{BT} yield
\[
(\sqrt{-1}\partial\overline{\partial}v)^n
=\lim_{k\to\infty}(\sqrt{-1}\partial\overline{\partial}\widetilde v_k)^n
=\lim_{k\to\infty}g_{j_k}^n\omega_0^n
=g^n\omega_0^n.
\]
Uniqueness follows from Lemma~\ref{lem:root-stability} with $b=d=0$.
\end{proof}

\subsection{Approximate the potential without changing the estimate}

We state the local form needed in the next section. We solve
the approximating Dirichlet problems on $B_R$, with $R=3L/2$,
and assume that $f$ is defined on $B_{2L}$ so that
its $n$th root $f^{1/n}$ can be mollified on a neighborhood of
$\overline{B_R}$. The smooth estimate uses the equation and growth
bounds only on $\overline{B_{L+1}}$, as explained in
Remark~\ref{rem:local-domain}.

\begin{proposition}[The pseudo-estimate for continuous potentials]
\label{prop:continuous-power}
Fix $0\le\alpha\le1$, $0<c\le1$, $C\ge1$, and
$f_{\max}>0$. Let $u\in C(\cl{B_R})$ be plurisubharmonic
on $B_R$, where $R=3L/2$, and suppose
\begin{equation}\label{eq:continuous-growth-hypotheses}
\begin{gathered}
(\sqrt{-1}\partial\overline{\partial}u)^n=f\omega_0^n
\quad\text{in }B_R,\\
\frac{c\rho}{\psi^\alpha}-2\le u\le
C\rho\psi^{1-\alpha}+2\quad\text{on }\cl{B_{L+1}}.
\end{gathered}
\end{equation}
Assume that $f\in C(B_{2L})$ is nonnegative and
\begin{equation}\label{eq:continuous-density-hypotheses}
f\le f_{\max},\qquad
\Delta_\R(f^{1/n})\ge-K\quad\text{in }B_{2L},
\end{equation}
where $K\ge0$ and the last inequality is distributional.
Choose the normalization and comparison constants of
Proposition~\ref{prop:smooth-weak-estimate} once for the enlarged
upper growth coefficient $2C$ and density upper bound
\begin{equation}\label{eq:continuous-buffered-data}
\overline f_{\max}=(f_{\max}^{1/n}+1)^n.
\end{equation}
Use the definitions of $V$ and $\varepsilon_\pm$ in
\eqref{eq:wg-fixed-notation}, and of $\lambda_L$ and $q_L$ in
\eqref{eq:wg-qL}.
For every fixed $r\ge2$, there is $L_r$ such that, when $L\ge L_r$,
\begin{equation}\label{eq:continuous-power}
0\le\mathcal G_tu(z)-u(z)
\le 2t^2V(z)^{\varepsilon_-}
 +\bigl(\lambda_Lt^2+E q_L\bigr)V(z)
\qquad(z\in B_r,\ 0<t\le1).
\end{equation}
Here $E$ and all normalization parameters are independent of $L$.
If also $f\ge c_0>0$ on $B_{2L}$, the constants can be chosen,
using $c_0/2$ as the lower density bound, so that
\begin{equation}\label{eq:continuous-positive-power}
0\le\mathcal G_tu(z)-u(z)
\le2t^2V(z)^{\varepsilon_+}+E_+q_LV(z)
\qquad(z\in B_r,\ 0<t\le1).
\end{equation}
The constants in this latter estimate are independent of $L$ and
may depend on $c_0$ and $K$.
\end{proposition}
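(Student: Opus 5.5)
The plan is to approximate $u$ uniformly on $\cl{B_R}$, $R=3L/2$, by smooth strictly plurisubharmonic Dirichlet solutions with smooth positive densities. We then apply Proposition~\ref{prop:smooth-weak-estimate} to each approximant, in the localized form of Remark~\ref{rem:local-domain}, and pass to the limit in \eqref{eq:continuous-power}.

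\textbf{Construction of the densities.} Mollify $f^{1/n}$ on a neighborhood of $\cl{B_R}$ to get smooth $\tilde g_j\to f^{1/n}$ uniformly on $\cl{B_R}$. Mollification preserves the distributional bound, so $\Delta_\R\tilde g_j\ge-K$. Set $g_j=\tilde g_j+\eta_j$ with constants $\eta_j\downarrow0$. Then $g_j>0$, $\Delta_\R g_j\ge-K$, $g_j\to f^{1/n}$ uniformly, and $g_j\le f_{\max}^{1/n}+1$ for large $j$. Hence $g_j^n\le\overline f_{\max}$ from \eqref{eq:continuous-buffered-data}, and the hypothesis on the root of the density holds for $g_j^n$.

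\textbf{Boundary data and uniform convergence.} Choose smooth boundary data $\eta_j$ on $\partial B_R$ with $\|\eta_j-u\|_{L^\infty(\partial B_R)}\to0$, using continuity of $u$ on $\cl{B_R}$. By \cite{CKNS}, as in the proof of Lemma~\ref{lem:continuous-dirichlet}, there are smooth strictly plurisubharmonic $u_j$ on $\cl{B_R}$ with $\det D^2_\C u_j=g_j^n$ and $u_j=\eta_j$ on $\partial B_R$. Lemma~\ref{lem:root-stability} gives
\[
\|u_j-u\|_{L^\infty(B_R)}\le\|\eta_j-u\|_{L^\infty(\partial B_R)}+R^2\|g_j-f^{1/n}\|_{L^\infty(B_R)}=:\theta_j\to0 .
\]

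\textbf{Main obstacle: the growth hypotheses for $u_j$.} The approximants must satisfy the growth hypothesis of \eqref{eq:wg-smooth-data} on $\cl{B_{L+1}}$ with the buffered coefficient $2C$, and the offset $2$ is already used by $u$ in \eqref{eq:continuous-growth-hypotheses}. I would handle this as follows. Since $\rho\psi^{1-\alpha}\ge e$, an error $\theta_j\le Ce$ is absorbed into the upper bound through $C\rho\psi^{1-\alpha}+2+\theta_j\le2C\rho\psi^{1-\alpha}+2$. For the lower bound I would not use the full offset for $u$. Instead, replace $c$ by $c/2$ in the choice of constants, which is a fixed data change made before the approximation, so that $\tfrac c2\rho\psi^{-\alpha}$ has slack $\tfrac c2\rho\psi^{-\alpha}\ge c/2\cdot e/2$ to absorb $\theta_j$. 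Alternatively, keep the hypothesis with offset $2$ strict by working with $u$ having offset $1$ after a fixed shift. If that bookkeeping proves awkward, I would simply note that the proof of Proposition~\ref{prop:smooth-weak-estimate} only uses the offset through $J$, so $J$ can be increased by $1$ in advance.

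\textbf{Passage to the limit.} Once the growth hypotheses hold, Proposition~\ref{prop:smooth-weak-estimate} gives \eqref{eq:core-pseudo} for each $u_j$, with $L_r,E,A_0,J$ independent of $j$. The averages satisfy $|\mathcal G_tu_j-\mathcal G_tu|\le\theta_j$ on $B_r$, since $\mathcal G_t$ has total mass one and its translations stay in $B_{L+1}$. Also $V_j\to V$ uniformly on $B_r$. Letting $j\to\infty$ gives \eqref{eq:continuous-power}; nonnegativity follows from the submean inequality.

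\textbf{Positive-density case.} If $f\ge c_0$, then $g_j^n\ge c_0/2$ for large $j$. The second assertion of Proposition~\ref{prop:smooth-weak-estimate}, with lower bound $c_0/2$, then gives \eqref{eq:positive-core-pseudo} for $u_j$, and the same limit yields \eqref{eq:continuous-positive-power}.
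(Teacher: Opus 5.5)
Your construction is the paper's proof step for step. You mollify $f^{1/n}$ and add a vanishing positive constant, which keeps $\Delta_\R g_j\ge-K$ and the buffered bound $\overline f_{\max}$. You solve the smooth Dirichlet problems by \cite{CKNS}, get uniform convergence from Lemma~\ref{lem:root-stability}, invoke Proposition~\ref{prop:smooth-weak-estimate} through Remark~\ref{rem:local-domain}, and pass to the limit.

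The one weak point is the step you flag as the main obstacle, the lower growth bound for the approximants. None of your three remedies proves the proposition as stated. The statement fixes the normalization for the data $(c,2C,\overline f_{\max})$, and $V$ and $\lambda_L$ are built from it through $a=c/4$, $d=a/4$ and $J$.

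\begin{itemize}
\item Choosing the constants for $c/2$ changes $a$ and $d$. You would then prove the estimate for a different $V$, with $\lambda_L$ doubled.
\item Increasing $J$ by $1$ requires reopening Proposition~\ref{prop:smooth-weak-estimate}, whose hypothesis has offset exactly $2$.
\item Both the larger $J$ and a fixed shift of $u$ produce $V+1$ in place of $V$ in the limit. Since $V\ge2$, this recovers \eqref{eq:continuous-power} only with the coefficients $2$ and $\lambda_L$ enlarged.
\end{itemize}

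The missing observation is that the shift can be $\theta_j$ itself, which tends to zero. Set $\tilde u_j=u_j+\theta_j$.

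\begin{itemize}
\item Adding a constant changes neither the equation nor $\mathcal G_t\tilde u_j-\tilde u_j$, because the kernel of $\mathcal G_t$ has mass one.
\item Your uniform bound gives $\tilde u_j\ge u\ge c\rho\psi^{-\alpha}-2$. So the lower hypothesis holds with the original $c$ and offset.
\item On the upper side, $\tilde u_j\le u+2\theta_j\le C\rho\psi^{1-\alpha}+2+2\theta_j\le 2C\rho\psi^{1-\alpha}+2$ once $2\theta_j\le e$, since $C\rho\psi^{1-\alpha}\ge e$. The whole error moves to the upper side, which is exactly what the buffer $2C$ is for.
\item Then $\tilde V_j=\tilde u_j+J-a\rho_Ah(s)\to V$ uniformly.
\end{itemize}

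The limit gives \eqref{eq:continuous-power} and \eqref{eq:continuous-positive-power} with precisely the stated $V$, $\lambda_L$ and coefficients. This is what the paper does, with $e_j$ in the role of your $\theta_j$.
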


\begin{proof}
We approximate the density root so that its Laplacian lower bound
is unchanged. Write $g=f^{1/n}$, choose a nonnegative smooth radial
convolution kernel $\chi$ of integral one supported in $B_1$, and set
$\chi_\delta(z)=\delta^{-2n}\chi(z/\delta)$.
For sequences $\delta_j\downarrow0$ and $s_j\downarrow0$ with
$0<\delta_j<L/4$ and $0<s_j\le1$, define
\begin{equation}\label{eq:continuous-root-approximation}
g_j=g*\chi_{\delta_j}+s_j,\qquad f_j=g_j^n.
\end{equation}
These functions are smooth on a neighborhood of $\cl{B_R}$.
By \eqref{eq:continuous-root-approximation}, the nonnegative kernel
of mass one preserves the density bounds
\eqref{eq:continuous-density-hypotheses}. Together with
\eqref{eq:continuous-buffered-data}, these bounds give
\begin{equation}\label{eq:continuous-root-control}
g_j>0,\qquad f_j\le\overline f_{\max},\qquad
\Delta_\R g_j=(\Delta_\R g)*\chi_{\delta_j}\ge-K,\qquad
\|g_j-g\|_{L^\infty(B_R)}\longrightarrow0.
\end{equation}
The final convergence follows from uniform continuity of $g$ on a
compact neighborhood of $\cl{B_R}$. If $f\ge c_0$, \eqref{eq:continuous-root-approximation} also gives
$f_j\ge c_0/2$ for all sufficiently large $j$.

We next approximate the boundary values. Choose smooth functions
$\eta_j$ on $\partial B_R$ converging uniformly to $u|_{\partial B_R}$.
When $u$ is continuous on $B_{2L}$, we may take
$\eta_j=(u*\chi_{\delta_j})|_{\partial B_R}$.
For the local statement of the proposition, a smooth uniform
approximation to the continuous boundary trace suffices.

The smooth Dirichlet theorem \cite{CKNS} gives a smooth strictly
plurisubharmonic function $w_j$ on $\cl{B_R}$ with density $f_j$
and boundary values $\eta_j$. To compare it with $u$, put
\begin{equation}\label{eq:continuous-uniform-error}
e_j=\|\eta_j-u\|_{L^\infty(\partial B_R)}
 +R^2\|g_j-g\|_{L^\infty(B_R)}.
\end{equation}
Lemma~\ref{lem:root-stability} and
\eqref{eq:continuous-root-control} imply
\begin{equation}\label{eq:continuous-uniform-approximation}
\|w_j-u\|_{L^\infty(B_R)}\le e_j\longrightarrow0.
\end{equation}

Adding $e_j$ to $w_j$ leaves its equation and averaging error
unchanged. The growth hypotheses
\eqref{eq:continuous-growth-hypotheses} and the uniform error bound
\eqref{eq:continuous-uniform-approximation} give, for all sufficiently
large $j$,
\begin{equation}\label{eq:continuous-growth-constants}
\frac{c\rho}{\psi^\alpha}-2\le w_j+e_j
\le C\rho\psi^{1-\alpha}+2+2e_j
\le2C\rho\psi^{1-\alpha}+2
\quad\text{on }\cl{B_{L+1}}.
\end{equation}
The last inequality uses $e_j\le1$, $C\ge1$, and
$\rho\psi^{1-\alpha}\ge e$.
Thus \eqref{eq:continuous-growth-constants} permits the fixed
choice of comparison constants for the buffered data
\eqref{eq:continuous-buffered-data}. Define
\begin{equation}\label{eq:continuous-normalized-potentials}
V_j=w_j+e_j+J-a\rho_Ah(s),\qquad
V=u+J-a\rho_Ah(s).
\end{equation}
By \eqref{eq:continuous-uniform-approximation}, $V_j\to V$
uniformly on $\cl{B_{L+1}}$.

Fix $r$, $L\ge L_r$, and $0<t\le1$.
Remark~\ref{rem:local-domain} and
\eqref{eq:continuous-growth-constants} allow application of
\eqref{eq:core-pseudo} to $w_j+e_j$. Consequently
\begin{equation}\label{eq:continuous-approximating-power}
0\le\mathcal G_tw_j(z)-w_j(z)
\le2t^2V_j(z)^{\varepsilon_-}
 +\bigl(\lambda_Lt^2+E q_L\bigr)V_j(z)
\quad(z\in B_r).
\end{equation}
All parameters in this inequality were fixed before $j$ varied.
The averaging formula \eqref{eq:weighted-average} and uniform
convergence \eqref{eq:continuous-uniform-approximation} give
$\mathcal G_tw_j\to\mathcal G_tu$ uniformly on $B_r$.
Together with \eqref{eq:continuous-normalized-potentials}, passage
to the limit in \eqref{eq:continuous-approximating-power} proves
\eqref{eq:continuous-power}.

If $f\ge c_0$, use the same buffered data and the positive-density
estimate \eqref{eq:positive-core-pseudo} instead. The identical
passage to the limit proves \eqref{eq:continuous-positive-power}.
No lower bound for $f$ was used in the proof of
\eqref{eq:continuous-power}.
\end{proof}

For a continuous potential on $B_{2L}$, its restriction to
$\cl{B_{3L/2}}$ satisfies the domain requirements of
Proposition~\ref{prop:continuous-power}. This proves the
continuous case of Theorem~\ref{thm:finite-ball}, including
densities that vanish. The next section removes continuity
of the potential.

\section{Locally bounded Bedford--Taylor solutions}\label{sec:approximation}

We now remove continuity of the potential. Let $u$ be locally bounded
and plurisubharmonic on $B_{2L}$, where $L$ is sufficiently large, and suppose
\begin{equation}\label{eq:BT-section-data}
(\sqrt{-1}\partial\overline{\partial}u)^n=f\,\omega_0^n,\qquad
\frac{c\rho}{\psi^\alpha}\le u\le C\rho\psi^{1-\alpha},
\end{equation}
with $f\ge0$ continuous and with the density bounds in
Theorem~\ref{thm:finite-ball}. Write $g=f^{1/n}$. We construct
continuous solutions of this same equation that decrease to the
given pointwise values of $u$. The continuous estimate from
Proposition~\ref{prop:continuous-power} will then pass to $u$.

\subsection{Construct decreasing smooth boundary data}

Set $R=3L/2$ and $R'=7L/4$. These radii leave room both for
convolution inside $B_{2L}$ and for comparison outside $B_R$.
Choose a nonnegative radial smooth function $\vartheta$, supported
in $B_1$ and of integral one. For $\varepsilon_j\downarrow0$ with
$\varepsilon_1<L/8$, define
\begin{equation}\label{eq:replacement-mollifiers}
u_j=u*\vartheta_{\varepsilon_j},\qquad
g_j=g*\vartheta_{\varepsilon_j}.
\end{equation}
Both convolutions are defined on a neighborhood of
$\overline{B_{R'}}$. The functions $u_j$ are smooth and
plurisubharmonic. Radial integration expresses each $u_j(z)$ as
a positive weighted average of spherical means of $u$ about $z$.
These means increase with their radius because $u$ is subharmonic.
For the radial convolutions \eqref{eq:replacement-mollifiers},
this monotonicity, the submean inequality, and upper semicontinuity give
\begin{equation}\label{eq:mollification}
u_j\downarrow u\quad\text{pointwise on }B_{R'}.
\end{equation}
Continuity of $g$ also gives $g_j\to g$ uniformly on
$\overline{B_{R'}}$.

We next compare the $n$th root $(\det D^2_\C u_j)^{1/n}$ of
the complex Hessian determinant with $g_j$.
Fix a positive Hermitian matrix $H$ with $\det H=1$, and set
$q_H(z)=H[z]$. Equation \eqref{eq:BT-section-data} gives
$(\sqrt{-1}\partial\overline{\partial}u)^n=g^n\omega_0^n$, while $\det H=1$ gives
$(\sqrt{-1}\partial\overline{\partial}q_H)^n=\omega_0^n$. Dinew's mixed Monge--Amp\`ere
inequality \cite[Theorem 1.2]{Dinew}, applied on relatively
compact balls to these two measures, gives
\[
\sqrt{-1}\partial\overline{\partial}u\wedge(\sqrt{-1}\partial\overline{\partial}q_H)^{n-1}\ge g\,\omega_0^n.
\]
For $n=1$ this is the given equation. Since $H$ is constant,
the displayed inequality is the linear distributional inequality
$\tr(H^{-1}(u_{i\bar j}))\ge ng$. Convolving this linear inequality as in
\eqref{eq:replacement-mollifiers} gives
\begin{equation}\label{eq:mollified-linear-bound}
\tr(H^{-1}D^2_{\C}u_j)\ge ng_j.
\end{equation}

For every positive semidefinite Hermitian matrix $A$,
\begin{equation}\label{eq:bellman}
(\det A)^{1/n}
=\inf_{H>0,\,\det H=1}\frac1n\tr(H^{-1}A).
\end{equation}
For $A>0$, the arithmetic--geometric mean inequality proves this
identity, with equality at $H=(\det A)^{-1/n}A$; singular $A$
follows by a limit. Applying \eqref{eq:bellman} to
\eqref{eq:mollified-linear-bound}, which holds for every $H>0$
with $\det H=1$, yields
\begin{equation}\label{eq:mollification-MA}
\bigl(\det D^2_{\C}u_j\bigr)^{1/n}\ge g_j.
\end{equation}

The root $g_j$ need not lie above $g$, so we correct the boundary
data before solving the exact equation. Choose numbers $a_j\downarrow0$
such that
\begin{equation}\label{eq:replacement-boundary-correction}
a_j\ge\|g_j-g\|_{L^\infty(\overline{B_{R'}})},\qquad
\phi_j=u_j+a_j\rho.
\end{equation}
For example, take the supremum of the norms in
\eqref{eq:replacement-boundary-correction} over all indices at
least $j$. The determinant-shift inequality
\eqref{eq:determinant-shift} and \eqref{eq:mollification-MA} give
\begin{equation}\label{eq:corrected-boundary-data}
\bigl(\det D^2_{\C}\phi_j\bigr)^{1/n}
\ge g_j+a_j\ge g,\qquad
\phi_j\downarrow u.
\end{equation}
The second assertion follows from \eqref{eq:mollification} and
\eqref{eq:replacement-boundary-correction}. Thus \eqref{eq:corrected-boundary-data} supplies smooth subsolutions
$\phi_j$ whose restrictions to $\partial B_R$ are decreasing boundary
data for the exact equation.

\subsection{Solve the exact equation and identify the limit}

Lemma~\ref{lem:continuous-dirichlet} supplies continuous
plurisubharmonic solutions $v_j$ on $\overline{B_R}$ satisfying
\begin{equation}\label{eq:Dirichlet-approx}
(\sqrt{-1}\partial\overline{\partial}v_j)^n=f\,\omega_0^n\quad\text{in }B_R,
\qquad v_j=\phi_j\quad\text{on }\partial B_R.
\end{equation}
The Bedford--Taylor comparison principle \cite{BT}, applied to
\eqref{eq:Dirichlet-approx} and \eqref{eq:corrected-boundary-data},
gives $v_j\ge\phi_j$. Applying it to consecutive solutions with
the same density and decreasing boundary values gives
\begin{equation}\label{eq:Dirichlet-order}
v_{j+1}\le v_j,\qquad v_j\ge\phi_j\ge u
\quad\text{on }B_R.
\end{equation}
By \eqref{eq:Dirichlet-order}, $v_j$ decreases to a limit at least
$u$. To identify this limit without assuming continuity of $u$, we
extend the replacements to $B_{R'}$ and compare their limit with
$u$ on an intermediate ball. Define
\begin{equation}\label{eq:glued-replacements}
\widetilde v_j(z)=
\begin{cases}
v_j(z),&z\in B_R,\\
\phi_j(z),&z\in B_{R'}\setminus B_R.
\end{cases}
\end{equation}
The two definitions agree on $\partial B_R$. By
\eqref{eq:Dirichlet-order}, $v_j\ge\phi_j$ inside, so the
plurisubharmonic pasting lemma applies:
inside the ball the extension is $\max(v_j,\phi_j)$, and outside
it is $\phi_j$. Thus $\widetilde v_j$ is plurisubharmonic on
$B_{R'}$.

The definition \eqref{eq:glued-replacements} and
\eqref{eq:Dirichlet-approx} give Monge--Amp\`ere measure
$f\,\omega_0^n$ in $B_R$, and at least that measure
outside $\overline{B_R}$ by \eqref{eq:corrected-boundary-data}.
The remaining sphere has zero $\omega_0^n$-measure, whereas the
Bedford--Taylor measure is nonnegative. Splitting a Borel set
into these three parts therefore gives
\begin{equation}\label{eq:glued-MA}
(\sqrt{-1}\partial\overline{\partial}\widetilde v_j)^n\ge f\,\omega_0^n
\quad\text{on }B_{R'}.
\end{equation}

By \eqref{eq:Dirichlet-order} inside $B_R$ and
\eqref{eq:corrected-boundary-data} outside, the functions
$\widetilde v_j$ decrease and lie between $u$ and
$\widetilde v_1$. The decreasing-limit property of plurisubharmonic
functions therefore makes their limit $\widetilde v$ locally bounded
and plurisubharmonic. Decreasing
Bedford--Taylor continuity \cite{BT} and \eqref{eq:glued-MA}
give, using \eqref{eq:BT-section-data},
\begin{equation}\label{eq:glued-limit-MA}
(\sqrt{-1}\partial\overline{\partial}\widetilde v)^n\ge f\,\omega_0^n=(\sqrt{-1}\partial\overline{\partial}u)^n,
\qquad \widetilde v\ge u.
\end{equation}
Moreover, \eqref{eq:corrected-boundary-data} gives
$\widetilde v=u$ on $B_{R'}\setminus B_R$.

Choose $\widehat R$ with $R<\widehat R<R'$. The two bounded
plurisubharmonic functions agree in a neighborhood of
$\partial B_{\widehat R}$, so their boundary comparison requires
no continuity of $u$. Applying the comparison principle
\cite[Lemma 1.6]{EGZ} to this boundary equality and
\eqref{eq:glued-limit-MA} gives $\widetilde v\le u$ on
$B_{\widehat R}$. Since \eqref{eq:glued-limit-MA} also gives
$\widetilde v\ge u$, we obtain
\begin{equation}\label{eq:replacement-limit}
v_j\downarrow u\quad\text{pointwise on }B_R.
\end{equation}

\subsection{Pass the local pseudo-estimate to the limit}

The lower bound in \eqref{eq:BT-section-data} and the ordering
\eqref{eq:Dirichlet-order} give $v_j\ge c\rho/\psi^\alpha$.
To recover the upper bound, fix $0<\eta\le1$.
Since $L+1<R$, the closed ball $\overline{B_{L+1}}$ is compactly
contained in $B_R$. We claim that, for all sufficiently large $j$,
\begin{equation}\label{eq:uniform-majorant}
\frac{c\rho}{\psi^\alpha}\le v_j
\le C\rho\psi^{1-\alpha}+\eta
\le C\rho\psi^{1-\alpha}+2
\quad\text{on }\cl{B_{L+1}}.
\end{equation}
Indeed, if no such index existed, the sets
\[
E_j=\{z\in\cl{B_{L+1}}:
 v_j(z)-C\rho(z)\psi(z)^{1-\alpha}\ge\eta\}
\]
would be nonempty. By \eqref{eq:Dirichlet-order} and continuity
of $v_j$, they are decreasing compact sets. Their intersection
would contain a point where
$u-C\rho\psi^{1-\alpha}\ge\eta$, by
\eqref{eq:replacement-limit}, contradicting
\eqref{eq:BT-section-data}.
This argument uses continuity of the replacements, not of $u$.

Apply Proposition~\ref{prop:continuous-power} to $v_j$, using
\eqref{eq:uniform-majorant}. Its fixed numerical offsets are
already covered by that proposition. Choose the buffered data
\eqref{eq:continuous-buffered-data} and all normalization parameters
once, independently of $j$ and $L$, and put
\[
V_j=v_j+J-a\rho_Ah(s),\qquad
V=u+J-a\rho_Ah(s).
\]
Equation \eqref{eq:replacement-limit} gives $V_j\downarrow V$
pointwise, and \eqref{eq:wg-normalization} gives $V>0$.
For every fixed $r\ge2$, $L\ge L_r$, and $0<t\le1$,
\eqref{eq:continuous-power} gives
\begin{equation}\label{eq:replacement-pseudo}
0\le\mathcal G_tv_j(z)-v_j(z)
\le2t^2V_j(z)^{\varepsilon_-}
 +\bigl(\lambda_Lt^2+E q_L\bigr)V_j(z)
\quad(z\in B_r).
\end{equation}
The constants in this inequality do not depend on $j$.
If $f\ge c_0>0$, the same replacements have the same lower
density bound, and \eqref{eq:continuous-positive-power} gives
\begin{equation}\label{eq:replacement-positive-pseudo}
0\le\mathcal G_tv_j(z)-v_j(z)
\le2t^2V_j(z)^{\varepsilon_+}+E_+q_LV_j(z).
\end{equation}

For fixed $z\in B_r$ and $0<t\le1$, the averaging set
$z+\cl{B_t}$ lies inside $B_R$ when $L\ge L_r$.
On this compact set, \eqref{eq:replacement-limit} gives pointwise
convergence, and the functions are bounded between $u$ and $v_1$.
The nonnegative kernel in \eqref{eq:weighted-average} has mass one,
so these bounds and pointwise convergence permit bounded convergence:
\[
\mathcal G_tv_j(z)\longrightarrow\mathcal G_tu(z),\qquad
v_j(z)\longrightarrow u(z).
\]
Thus the difference also converges; no monotonicity of
$\mathcal G_tv_j-v_j$ is needed. Letting $j\to\infty$ in
\eqref{eq:replacement-pseudo} proves
\begin{equation}\label{eq:BT-power}
0\le\mathcal G_tu(z)-u(z)
\le2t^2V(z)^{\varepsilon_-}
 +\bigl(\lambda_Lt^2+E q_L\bigr)V(z)
\quad(z\in B_r,\ 0<t\le1).
\end{equation}
If $f\ge c_0$, the same limit in
\eqref{eq:replacement-positive-pseudo} proves
\begin{equation}\label{eq:BT-positive-power}
0\le\mathcal G_tu(z)-u(z)
\le2t^2V(z)^{\varepsilon_+}+E_+q_LV(z)
\quad(z\in B_r,\ 0<t\le1).
\end{equation}
This completes the proof of Theorem~\ref{thm:finite-ball}.
The approximation limit is taken at each fixed $L$.
In the entire constant-density application, $K=0$ gives
$\lambda_L=0$ by \eqref{eq:wg-qL}. After taking the approximation
limit, we let $L\to\infty$; the same formula gives $q_L\to0$.

\section{From the pseudo-estimate to a real \texorpdfstring{$C^2$}{C2} estimate}
\label{sec:real-c2}

At a fixed finite radius, the additive term in
\eqref{eq:positive-finite-ball} survives as $t\downarrow0$.
We now show that it suffices to make this term small, provided
the density is positive with a prescribed Dini modulus.
We first establish a local perturbation criterion, then use
compactness to obtain the initial quadratic approximations.
Throughout this section, $C^2$ norms use all real derivatives
of order at most two.

\subsection{Constant-density perturbations}

We first use Savin's theorem to control a constant-density solution
that is close to a quadratic polynomial with positive definite
complex Hessian. The resulting estimate controls real derivatives.
For a real symmetric matrix $N$, write $\mathcal H(N)$ for its associated complex Hessian,
so that $\mathcal H(D^2_\R v)=D^2_\C v$. If $N\ge0$, then
\begin{equation}\label{c2:projection}
\mathcal H(N)\ge0,
\qquad \tr\mathcal H(N)=\frac14\tr N.
\end{equation}

\begin{lemma}[Constant-density perturbations]\label{c2:savin}
Fix $0<\lambda\le\Lambda$. There are $\beta\in(0,1)$,
$e_*>0$, and $C$, depending only on $n,\lambda,\Lambda$,
with the following property. Suppose that $P$ is a real quadratic
polynomial, $H=D^2_\C P$ satisfies
$\lambda\Id\le H\le\Lambda\Id$, and
$v\in C(B_r)\cap\operatorname{PSH}(B_r)$ satisfies
\begin{equation}\label{c2:savin-hypotheses}
\det D^2_\C v=\det H,
\qquad \|v-P\|_{L^\infty(B_r)}\le er^2,
\qquad 0\le e\le e_*.
\end{equation}
The equation is understood in the Bedford--Taylor sense. Then
$v\in C^{2,\beta}(B_{r/2})$ and
\begin{equation}\label{c2:savin-estimate}
\begin{split}
&r^{-2}\|v-P\|_{L^\infty(B_{r/2})}
+r^{-1}\|D_\R(v-P)\|_{L^\infty(B_{r/2})}\\
&\qquad
+\|D^2_\R(v-P)\|_{L^\infty(B_{r/2})}
+r^\beta[D^2_\R(v-P)]_{C^\beta(B_{r/2})}
\le Ce.
\end{split}
\end{equation}
\end{lemma}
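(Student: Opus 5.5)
The plan is to reduce Lemma~\ref{c2:savin} to Savin's small perturbation theorem for uniformly elliptic fully nonlinear equations, after two normalizations. First rescale: put $\tilde v(x)=r^{-2}v(rx)$ and $\tilde P(x)=r^{-2}P(rx)$ on $B_1$. Then $\tilde P$ is again a real quadratic polynomial with the same complex Hessian $H$, $\det D^2_\C\tilde v=\det H$, and $\|\tilde v-\tilde P\|_{L^\infty(B_1)}\le e$. Estimate \eqref{c2:savin-estimate} is scale invariant under this change, so we may take $r=1$. Second, use a complex linear change of coordinates $z=H^{-1/2}\zeta$ to make $D^2_\C\tilde P=\Id$ and $\det H=1$. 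This distorts balls by factors controlled by $\lambda,\Lambda$, which only shrinks the radius $1/2$ to a fixed fraction and is recovered by a covering argument. Subtracting the pluriharmonic part of $\tilde P$ (the real part of a holomorphic quadratic plus affine terms), we may assume $P=|z|^2$. This does not affect $\tilde v-\tilde P$, and $D^2_\C$ is unchanged.

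Next, write the equation as a fully nonlinear equation for the real Hessian: $F(D^2_\R v)=\log\det\mathcal H(D^2_\R v)=0$. The function $F$ is defined, smooth and concave on the open set where $\mathcal H(N)>0$. Near $N_0=D^2_\R|z|^2$ it is uniformly elliptic in a neighborhood of size depending only on $n$, with $F(N_0)=0$. Savin's theorem (small perturbation solutions) requires $v$ to be a viscosity solution with $\|v-P\|_{L^\infty(B_1)}\le e_*$. It then gives $v\in C^{2,\beta}(B_{1/2})$ with $\|v-P\|_{C^{2,\beta}(B_{1/2})}\le Ce$. Savin's result needs only $C^2$ regularity of $F$ near $N_0$, not concavity, though here concavity holds anyway. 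To apply it, extend $F$ outside a neighborhood of $N_0$ to a globally uniformly elliptic operator $\tilde F$ agreeing with $F$ near $N_0$. The conclusion then says $D^2v$ stays near $N_0$, so $v$ also solves the original equation.

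The main obstacle is the passage from Bedford--Taylor solutions to viscosity solutions of the real equation, including the degenerate-direction issue. I would use the Eyssidieux--Guedj--Zeriahi equivalence: a continuous plurisubharmonic function solving $(\sqrt{-1}\partial\overline\partial v)^n=g\,\omega_0^n$ with continuous $g>0$ is a viscosity solution of $\det D^2_\C v=g$ in the sense of upper and lower test functions. For lower test functions, the convention is to use $(\det D^2_\C\phi)_+$ restricted to $D^2_\C\phi\ge0$. This convention must be checked against Savin's viscosity notion for $\tilde F$. For supersolution tests, if the test Hessian lies far from $N_0$, the extension $\tilde F$ must still give the correct inequality. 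I would arrange $\tilde F\le F$ where $F$ is defined and $\tilde F$ very negative otherwise, and verify the inequalities only for test functions touching at points where $\|v-P\|$ is small. Savin's proof uses only such touching by paraboloids near $P$, via the Harnack step, so this consistency check is the delicate point.

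Finally, undo the normalizations. Interior estimates on the distorted ellipsoid are turned into estimates on $B_{r/2}$ by applying the argument on balls of radius comparable to $r$ centered at points of $B_{r/2}$, with constants depending on $\lambda,\Lambda$. The $L^\infty$ and gradient terms in \eqref{c2:savin-estimate} follow from the $C^{2,\beta}$ bound together with the hypothesis $\|v-P\|\le er^2$ by interpolation. This gives the estimate with $\beta,e_*,C$ depending only on $n,\lambda,\Lambda$.
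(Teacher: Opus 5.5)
Your overall route is the paper's: rescale, pass to viscosity solutions via the Eyssidieux--Guedj--Zeriahi equivalence, then apply Savin's small perturbation theorem. Normalizing to $H=\Id$ by a complex linear change of variables is a harmless alternative to the paper's uniformity over the compact set of admissible $H$.

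\textbf{The gap.} The step you flag as delicate is a genuine gap, and your proposed workaround does not work. You ask for a \emph{globally uniformly elliptic} extension $\tilde F$ of $\log\det\mathcal H$. Apart from plurisubharmonicity, nothing restricts the Hessian of a test function at a touching point; the bound $\|v-P\|_{L^\infty(B_1)}\le e$ holds everywhere and says nothing about such Hessians. So to get the viscosity inequalities for $\tilde F$ from the EGZ ones, you would need $\tilde F(N)\ge0$ whenever $\mathcal H(N)\ge0$ and $\det\mathcal H(N)\ge\det H$, and $\tilde F(N)\le0$ whenever $\mathcal H(N)\not\ge0$ or $\det\mathcal H(N)\le\det H$. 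Then $\tilde F$ must vanish on the whole level set $\{\det\mathcal H(N)=\det H\}$. Its normal $\mathcal H(N)^{-1}$ has unbounded eigenvalue ratio, for instance at $\mathcal H(N)=\operatorname{diag}(\mu,\mu^{-1},1,\dots,1)$, and no uniformly elliptic operator can vanish on such a set.

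Your one-sided choice $\tilde F\le F$ also fails, for two reasons:
\begin{itemize}
\item it protects the supersolution inequality but destroys the subsolution inequality for upper tests, where $F\ge0$ must yield $\tilde F\ge0$;
\item it is not even realizable, since a Lipschitz $\tilde F$ cannot stay below $\log\det\mathcal H$, which tends to $-\infty$ at the boundary of the cone.
\end{itemize}
Restricting to paraboloids near $P$ would mean reworking Savin's proof, which you do not do. Your remark that $D^2v$ ends up near $N_0$, so $v$ solves the original equation, runs in the direction that is not needed; what is needed is that $v-P$ solves the extended equation.

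\textbf{The missing idea.} Savin's theorem needs only global \emph{degenerate} ellipticity, together with uniform ellipticity and $C^2$ bounds near the zero solution. So one should extend by monotonicity, not by uniform ellipticity. The paper sets
\begin{itemize}
\item $F_H(N)=\det(H+\mathcal H(N))^{1/n}-(\det H)^{1/n}$ when $H+\mathcal H(N)\ge0$;
\item $F_H(N)=-(\det H)^{1/n}$ otherwise.
\end{itemize}
Unlike $\log\det$, the function $\det^{1/n}$ tends continuously to zero at the boundary of the cone, so $F_H$ is continuous and monotone. It reproduces the EGZ notion exactly. Upper tests lie in the cone by plurisubharmonicity, and there $F_H\ge0$ is precisely the EGZ subsolution inequality. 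Lower tests outside the cone satisfy $F_H=-(\det H)^{1/n}\le0$ automatically. Hence $w=v-P$ is a viscosity solution of $F_H(D^2_\R w)=0$ for all test functions, with no localization of Hessians.

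\textbf{A smaller omission.} As usually stated, Savin's theorem gives smallness of the $C^{2,\beta}$ norm rather than a bound linear in $e$. The paper obtains $Ce$ by rewriting $0=\log\det(H+D^2_\C w)-\log\det H$ as a linear uniformly elliptic equation with $C^\beta$ coefficients and applying interior Schauder estimates.
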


\begin{proof}
Rescale by $z=ry$ and divide the potential by $r^2$; this reduces
the proof to $r=1$. Set $w=v-P$ and $c=\det H$, and define
\begin{equation}\label{c2:savin-operator}
F_H(N)=
\begin{cases}
\det\bigl(H+\mathcal H(N)\bigr)^{1/n}-c^{1/n},
 &H+\mathcal H(N)\ge0,\\
-c^{1/n},&H+\mathcal H(N)\not\ge0.
\end{cases}
\end{equation}
The operator in \eqref{c2:savin-operator} is continuous and
degenerate elliptic on all real symmetric matrices. It is smooth
near zero and satisfies $F_H(0)=0$. Its differential at zero is
\[
DF_H(0)[N]=\frac{c^{1/n}}n
\tr\bigl(H^{-1}\mathcal H(N)\bigr).
\]
The Hessian bounds in Lemma~\ref{c2:savin} and
\eqref{c2:projection} give uniform real ellipticity in a fixed
neighborhood of zero. The size of this
neighborhood and the bounds for the first two derivatives of $F_H$
are uniform as $H$ ranges over the indicated compact subset of the
positive-definite cone.

By the equivalence between continuous pluripotential and viscosity
solutions \cite[Proposition~1.5 and Lemma~4.7(1)]{EGZ}, the equation
for $v$ in \eqref{c2:savin-hypotheses} also holds in the viscosity
sense. A $C^2$ test function
$\varphi$ for $w=v-P$ corresponds to the test function $P+\varphi$
for $v$, and
$D^2_\C(P+\varphi)=H+\mathcal H(D^2_\R\varphi)$.
Since $v$ is plurisubharmonic, upper tests satisfy
$H+\mathcal H(D^2_\R\varphi)\ge0$.
For tests satisfying this inequality, the viscosity inequalities
for $v$ give those for $F_H(D^2_\R w)=0$. At lower test functions outside the positive
cone, the supersolution inequality follows directly from the
second line of \eqref{c2:savin-operator}. Thus $F_H$ satisfies the
hypotheses of Savin's perturbation theorem \cite[Theorem~1.3]{Savin}:
global degenerate ellipticity and smooth uniform ellipticity near
the zero solution suffice.

Choose the ellipticity neighborhood for $F_H$ in
\eqref{c2:savin-operator} small enough that the $C^{2,\beta}$ bound
in Savin's theorem implies a complex-Hessian bound of $\lambda/2$.
The neighborhood can be chosen uniformly over the fixed cone of $H$.
Apply Savin's theorem \cite[Theorem~1.3]{Savin} on interior balls
covering $B_{3/4}$, and reduce $e_*$ in
\eqref{c2:savin-hypotheses} to the corresponding amplitude threshold.
We obtain a uniform $C^{2,\beta}$ bound there and
$\|D^2_\C w\|_{L^\infty(B_{3/4})}\le\lambda/2$.
The equation $\det D^2_\C v=\det H$ in Lemma~\ref{c2:savin},
together with $v=P+w$, therefore gives
\[
0=\log\det(H+D^2_\C w)-\log\det H
=\int_0^1\tr\bigl((H+sD^2_\C w)^{-1}D^2_\C w\bigr)\,ds.
\]
This is a linear uniformly elliptic equation for $w$ whose
coefficients have uniformly bounded $C^\beta$ norms. By the
flatness bound \eqref{c2:savin-hypotheses},
$\|w\|_{L^\infty(B_{3/4})}\le e$.
The interior linear Schauder estimate therefore gives
\[
\|w\|_{C^{2,\beta}(B_{1/2})}
\le C\|w\|_{L^\infty(B_{3/4})}\le Ce.
\]
Scaling back proves \eqref{c2:savin-estimate}.
\end{proof}

To treat a variable density, we replace $f$ on a ball by
its value at the center and use Lemma~\ref{lem:root-stability}.
Suppose that $u$ is continuous on $\overline{B_r(x)}$,
plurisubharmonic on $B_r(x)$, and satisfies
\[
(\sqrt{-1}\partial\overline{\partial}u)^n=f\omega_0^n
\]
there in the Bedford--Taylor sense. Assume that $f$ is continuous
on $\overline{B_r(x)}$ and $0<c_0\le f\le c_1$.
Set $c=f(x)$, and let $v$ be the continuous plurisubharmonic
solution of
\begin{equation}\label{c2:frozen-problem}
\det D^2_\C v=c\quad\text{in }B_r(x),
\qquad v=u\quad\text{on }\partial B_r(x).
\end{equation}
The continuous Dirichlet solution exists by the Bedford--Taylor
theory \cite{BTDirichlet}. If $f$ has modulus of continuity
$\omega$, then Lemma~\ref{lem:root-stability} gives
\begin{equation}\label{c2:frozen-comparison}
\|u-v\|_{L^\infty(B_r(x))}
\le r^2\|f^{1/n}-c^{1/n}\|_{L^\infty(B_r(x))}
\le C r^2\omega(r),
\end{equation}
where $C$ depends only on $n,c_0,c_1$.
If $P$ is a strictly plurisubharmonic quadratic polynomial satisfying
$\det D^2_\C P=c$ and
$\|u-P\|_{L^\infty(B_r(x))}\le er^2$, comparison in
\eqref{c2:frozen-problem} with $P\pm er^2$ gives
\begin{equation}\label{c2:frozen-flatness}
\|v-P\|_{L^\infty(B_r(x))}\le er^2.
\end{equation}
These estimates require no derivative bounds at the boundary
of the comparison ball.

\subsection{Summable quadratic approximations}

We next combine \eqref{c2:savin-estimate} with
\eqref{c2:frozen-comparison}. The Dini tail
$\mathcal D_\omega$ defined in \eqref{c2:dini-tail} tends to zero
as $r\downarrow0$.

\begin{lemma}[A local quadratic approximation criterion]
\label{c2:local-criterion}
Let $u\in C(B_{3/2})\cap\operatorname{PSH}(B_{3/2})$ solve
$\det D^2_\C u=f$ in the Bedford--Taylor sense, where
$c_0\le f\le c_1$ and $f$ has a common Dini modulus $\omega$.
Fix $0<\lambda\le\Lambda$.
There is $\delta>0$, depending only on
$n,c_0,c_1,\lambda,\Lambda$, with the following property.
Suppose that $0<r_0<1/8$ and that for every
$x\in\cl{B_{5/4}}$ there is a real quadratic polynomial $P_x$
such that
\begin{equation}\label{c2:initial-quadratics}
\begin{gathered}
\lambda\Id\le D^2_\C P_x\le\Lambda\Id,
\qquad \det D^2_\C P_x=f(x),\\
\|u-P_x\|_{L^\infty(B_{r_0}(x))}\le e_0r_0^2,
\qquad e_0+\mathcal D_\omega(r_0)\le\delta,
\end{gathered}
\end{equation}
and
\begin{equation}\label{c2:initial-jets}
\sup_{x\in\cl{B_{5/4}}}
\sum_{\ell=0}^2|D_\R^\ell P_x(x)|\le J.
\end{equation}
Then $u\in C^2(B_{5/4})$ and
\begin{equation}\label{c2:local-bound}
\|u\|_{C^2(B_{5/4})}
\le C\bigl(J+e_0+\mathcal D_\omega(r_0)\bigr).
\end{equation}
Its real Hessian has a common modulus of continuity determined by
$n,c_0,c_1,\lambda,\Lambda,r_0,e_0,\omega$.
\end{lemma}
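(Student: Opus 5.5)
The plan is a Caffarelli-type iteration of quadratic approximations at geometric scales $r_k=\theta^kr_0$ around each fixed $x\in\cl{B_{5/4}}$. We freeze the density at the center and use Lemma~\ref{c2:savin} on the frozen problem. The Dini condition makes the errors summable, and the jets then converge.

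Fix $\theta\in(0,1/4)$ small, depending only on $n,\lambda,\Lambda$, so that $C\theta^{\beta}\le1/2$, where $C,\beta$ are the constants of Lemma~\ref{c2:savin} for the cone $[\lambda/2,2\Lambda]$. We argue by induction, starting from $P^0=P_x$. Suppose that at step $k$ we have a quadratic $P^k$ with $\det D^2_\C P^k=f(x)$, $\tfrac\lambda2\Id\le D^2_\C P^k\le2\Lambda\Id$, and $\|u-P^k\|_{L^\infty(B_{r_k}(x))}\le e_kr_k^2$ with $e_k\le e_*/2$.

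Let $v$ solve the frozen problem \eqref{c2:frozen-problem} on $B_{r_k}(x)$ with constant $c=f(x)$. By \eqref{c2:frozen-comparison} and \eqref{c2:frozen-flatness}, $\|v-P^k\|\le e_kr_k^2$ and $\|u-v\|\le C_1r_k^2\omega(r_k)$. Lemma~\ref{c2:savin} then gives $\|D^2_\R(v-P^k)\|\le Ce_k$ on $B_{r_k/2}(x)$, together with the $C^{2,\beta}$ bound. Let $P^{k+1}$ be the second-order Taylor polynomial of $v$ at $x$. Then $\det D^2_\C P^{k+1}=f(x)$ holds exactly, since $v$ is a classical solution there, and on $B_{r_{k+1}}(x)$ we have $|v-P^{k+1}|\le Ce_kr_k^{-\beta}r_{k+1}^{2+\beta}$. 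Consequently
\[
e_{k+1}\le C\theta^{\beta}e_k+C_1\theta^{-2}\omega(r_k)\le\tfrac12e_k+C_2\omega(\theta^kr_0).
\]
Summing over $k$ gives $\sum_ke_k\le 2e_0+C_2\sum_k\omega(\theta^kr_0)\le C(e_0+\mathcal D_\omega(r_0))$. Here we compare the sum with $\int_0^{r_0}\omega(s)\,ds/s$, using that $\omega$ is nondecreasing. The jet increments satisfy $|D^2(P^{k+1}-P^k)|\le Ce_k$, $|D(P^{k+1}-P^k)(x)|\le Ce_kr_k$, and $|(P^{k+1}-P^k)(x)|\le Ce_kr_k^2$. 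Hence, once $\delta$ is small, the cumulative change in $D^2_\C P^k$ stays below $\lambda/2$, and the invariant cone is preserved. This closes the induction, and it is the place where $\delta$ is chosen. The last step uses $e_*$ for the enlarged cone.

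The jets $P^k$ converge to a quadratic $P^\infty_x$ with $|u-P^\infty_x|\le C(e_k+\sum_{j\ge k}e_j)r_k^2$ on $B_{r_k}(x)$. Thus $u$ is pointwise $C^2$ at $x$ with jet $P^\infty_x$, and the jet is bounded by $J+C(e_0+\mathcal D_\omega(r_0))$. The value and gradient at $x$ are controlled because the initial jet at $x$ satisfies \eqref{c2:initial-jets}. The tail $\sigma(r_k):=\sum_{j\ge k}e_j$ tends to zero at a rate that depends only on the data, through $2^{-k}e_0$ and tails of $\sum\omega(\theta^jr_0)$.

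The main obstacle is upgrading these pointwise $C^2$ expansions to genuine $C^2$ with a uniform modulus. We follow the standard argument. For $x,y\in B_{5/4}$ with $|x-y|=r\le r_0\theta$, choose $k$ with $r_{k+1}\le 2r\le r_k$ and compare $P^\infty_x$ and $P^\infty_y$ on $B_{r}(x)\cap B_r(y)$, a set that contains a ball of radius comparable to $r$. Both polynomials approximate $u$ there with error $\sigma'(r)r^2$, where $\sigma'$ collects $e_k$ and the tails. Equivalence of norms on quadratics over a ball of radius $r$ then yields
\[
|D^2P^\infty_x-D^2P^\infty_y|\le C\sigma'(r),
\]
and similarly for the lower-order terms after rescaling. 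Together with the pointwise expansions, this shows that $u\in C^2$ with $D^2_\R u(x)=D^2P^\infty_x$. We obtain the modulus of continuity of the real Hessian and the bound \eqref{c2:local-bound}; pairs with $|x-y|\ge r_0\theta$ are handled trivially by the sup bound. One needs $B_{r_0}(x)\subset B_{3/2}$, which holds since $r_0<1/8$.
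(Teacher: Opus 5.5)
Your proposal is correct and follows the paper's proof essentially step for step. It uses the same iteration at scales $r_k=\theta^kr_0$ with the frozen constant-density replacement, Lemma~\ref{c2:savin} applied on the enlarged cone $[\lambda/2,2\Lambda]$, the Taylor polynomial of the replacement as $P^{k+1}$, the recurrence $e_{k+1}\le\frac12e_k+C\omega(\theta^kr_0)$ summed through the Dini tail, and the comparison of the limiting polynomials on a ball of radius comparable to $|x-y|$ to turn pointwise expansions into classical $C^2$. The differences are cosmetic: your invariant is $e_k\le e_*/2$, and you compare on the lens $B_r(x)\cap B_r(y)$, which contains the paper's ball $B_{d/2}((x+y)/2)$.
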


\begin{proof}
Fix a centre $x$ and translate it to zero. Put $P_0=P_x$ and
$c=f(x)$. Choose the constants in Lemma~\ref{c2:savin} for the
larger cone
$\frac\lambda2\Id\le H\le2\Lambda\Id$.
We construct quadratic polynomials $P_k$ on the balls
$B_{r_k}$, where $r_k=\theta^kr_0$ and $\theta\in(0,1/4)$
will be fixed below. The induction maintains
\begin{equation}\label{c2:inductive-flatness}
\|u-P_k\|_{L^\infty(B_{r_k})}\le e_kr_k^2,
\qquad \det D^2_\C P_k=c.
\end{equation}

At the $k$th step, suppose in addition that
\begin{equation}\label{c2:inductive-cone}
e_k\le e_*,\qquad
\frac{\lambda}{2}\Id\le D^2_\C P_k\le2\Lambda\Id.
\end{equation}
We verify below that these conditions hold at every step.
Let $v_k$ be the continuous solution on $B_{r_k}$ with
constant density $c$ and boundary values $u$, as in
\eqref{c2:frozen-problem}. The bounds \eqref{c2:inductive-flatness} and
\eqref{c2:frozen-flatness} place $v_k$ within $e_kr_k^2$ of
$P_k$. Thus \eqref{c2:inductive-cone} and Lemma~\ref{c2:savin}
give $v_k\in C^{2,\beta}(B_{r_k/2})$.
The estimates \eqref{c2:frozen-comparison} and
\eqref{c2:savin-estimate} give
\[
\|u-v_k\|_{L^\infty(B_{r_k})}\le Cr_k^2\omega(r_k),
\qquad
r_k^\beta[D^2_\R(v_k-P_k)]_{C^\beta(B_{r_k/2})}
\le Ce_k.
\]
We may therefore define $P_{k+1}$ as the real quadratic Taylor
polynomial of $v_k$ at zero. Taylor's formula, the preceding
comparison bound \eqref{c2:frozen-comparison}, and
\eqref{c2:savin-estimate} give on $B_{\theta r_k}$
\begin{equation}\label{c2:iteration}
\begin{split}
(\theta r_k)^{-2}
\|u-P_{k+1}\|_{L^\infty(B_{\theta r_k})}
&\le C\theta^\beta e_k+C\theta^{-2}\omega(r_k),\\
|D_\R^\ell(P_{k+1}-P_k)(0)|
&\le Ce_kr_k^{2-\ell},\qquad \ell=0,1,2.
\end{split}
\end{equation}
Since $v_k$ solves $\det D^2_\C v_k=c$ and $P_{k+1}$
has the same Hessian as $v_k$ at zero,
$\det D^2_\C P_{k+1}=c$.

Choose $\theta$ so that $C\theta^\beta\le1/2$, and define
\begin{equation}\label{c2:error-recurrence}
e_{k+1}=\frac12e_k+C\theta^{-2}\omega(\theta^kr_0).
\end{equation}
Summing \eqref{c2:error-recurrence} and using monotonicity
of $\omega$ gives
\begin{equation}\label{c2:error-sum}
\begin{split}
\sum_{k=0}^\infty e_k
&\le C\left(e_0+\sum_{k=0}^\infty\omega(\theta^kr_0)\right)\\
&\le C\bigl(e_0+\mathcal D_\omega(r_0)\bigr).
\end{split}
\end{equation}
Indeed, for $k\ge1$,
\[
|\log\theta|\,\omega(\theta^kr_0)
\le\int_{\theta^kr_0}^{\theta^{k-1}r_0}
\frac{\omega(s)}s\,ds.
\]
By the initial smallness condition \eqref{c2:initial-quadratics},
we can choose $\delta$ so that \eqref{c2:error-sum}
keeps every $e_k$ below $e_*$ and the total change of the
complex Hessians in \eqref{c2:iteration} below
$\min\{\lambda/2,\Lambda\}$.
Thus \eqref{c2:inductive-cone} persists by induction,
which justifies every step.

The three coefficient estimates in \eqref{c2:iteration}
show that $P_k$ converges coefficientwise to a polynomial
$P_{\infty,x}$. On $B_{r_k}(x)$, combine \eqref{c2:inductive-flatness}
with the coefficient tails from \eqref{c2:iteration}.
The summability in \eqref{c2:error-sum} gives
\begin{equation}\label{c2:uniform-peano}
r_k^{-2}\|u-P_{\infty,x}\|_{L^\infty(B_{r_k}(x))}
\le C\sum_{j=k}^\infty e_j\longrightarrow0.
\end{equation}
The sequence on the right is independent of $x$. Passing from
the radii $r_k$ to an arbitrary smaller radius changes the
bound in \eqref{c2:uniform-peano} by at most $\theta^{-2}$.
Hence there is a common function $\eta(r)\to0$ such that
\begin{equation}\label{c2:uniform-expansion}
\|u-P_{\infty,x}\|_{L^\infty(B_r(x))}
\le\eta(r)r^2
\quad (x\in\cl{B_{5/4}},\ 0<r\le r_0).
\end{equation}

For completeness, this uniform expansion gives classical
second derivatives, not only pointwise quadratic expansions.
Write the limiting polynomial as
\[
P_{\infty,x}(z)=a_x+b_x\cdot(z-x)
+\tfrac12(z-x)^TM_x(z-x).
\]
The expansion \eqref{c2:uniform-expansion} gives
$a_x=u(x)$ and $D_\R u(x)=b_x$.
For $0<d=|x-y|<r_0/4$, we compare the two limiting
polynomials on the ball of radius $d/2$ about $(x+y)/2$.
By \eqref{c2:uniform-expansion}, both approximation errors
there are bounded by $4\eta(2d)d^2$.
After rescaling this ball to unit radius,
the supremum norm of a quadratic polynomial controls all
its coefficients. Applying this finite-dimensional norm
estimate to the difference of the two polynomials gives
\begin{equation}\label{c2:limiting-jets}
|M_x-M_y|\le C\eta(2d),
\qquad
|b_y-b_x-M_x(y-x)|\le C\eta(2d)d.
\end{equation}
Since $D_\R u(q)=b_q$ at every center $q$, these two
inequalities in \eqref{c2:limiting-jets} show that $D_\R u$ is
differentiable with derivative $M_x$, and that $D^2_\R u=M_x$
is continuous.
Finally, summing the three jet estimates in
\eqref{c2:iteration}, using \eqref{c2:error-sum}, $r_0<1$,
and the initial jet bound \eqref{c2:initial-jets}, proves
\eqref{c2:local-bound}.
\end{proof}

\subsection{A bounded trace and a Dini density give regularity}

We prove the regularity statement used both in the compactness argument
and when passing from continuous approximations to a weak solution.

\begin{lemma}\label{c2:elliptic-dini}
Let $u$ be a locally bounded plurisubharmonic function on an open
set $\Omega$, with $(\sqrt{-1}\partial\overline{\partial}u)^n=f\omega_0^n$ in the Bedford--Taylor
sense. Suppose $f$ is positive and locally Dini continuous, and
$\Delta_\R u$ is locally bounded in distributions. Then
$u\in C^2(\Omega)$.
\end{lemma}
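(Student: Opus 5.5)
The plan is to reduce to the local quadratic approximation criterion, Lemma~\ref{c2:local-criterion}, on small balls, using the bounded Laplacian to produce the initial quadratics by compactness. The statement is local, so fix $x_0\in\Omega$ and a ball $B_{2\varrho}(x_0)\Subset\Omega$ on which $\Delta_\R u\le M$ in distributions. On it we have $0<c_0\le f\le c_1$ and a common Dini modulus $\omega$.

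\textbf{Step 1: regularity and eigenvalue bounds.} A subharmonic function with bounded distributional Laplacian lies in $W^{2,p}_{loc}$ for every $p<\infty$, by Calder\'on--Zygmund, hence in $C^{1,\gamma}$. In particular $u$ is continuous, so the continuous theory of Sections~\ref{sec:nonnegative-density}--\ref{sec:real-c2} applies. Almost everywhere $D^2_\C u\ge0$, $\tr D^2_\C u\le M/4$, and $\det D^2_\C u=f\ge c_0$. These three facts bound every eigenvalue of $D^2_\C u$ above by $M/4$ and below by $\lambda_0=c_0(M/4)^{1-n}$. For the Bedford--Taylor measure of a $W^{2,n}$ function to equal $\det D^2_\C u\,\omega_0^n$ a.e., I invoke the standard identification for $W^{2,n}_{loc}$ plurisubharmonic functions (B\l ocki).

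\textbf{Step 2: blow-up compactness for the initial quadratics.} This is the main obstacle. I need: for each $\delta>0$ there is $r_0$ such that for every $x$ in a compact set some quadratic $P_x$ with $\det D^2_\C P_x=f(x)$, $\lambda\Id\le D^2_\C P_x\le\Lambda\Id$ and bounded jet satisfies $\|u-P_x\|_{L^\infty(B_{r_0}(x))}\le \delta r_0^2/2$. I argue by contradiction with rescalings $u_k(y)=r_k^{-2}[u(x_k+r_ky)-\ell_k(y)]$, where $\ell_k$ is the affine part of $u$ at $x_k$ and $r_k\to0$. These rescalings have uniformly bounded Laplacian and are normalized at $0$, so they are precompact in $C^{1,\gamma}_{loc}$. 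Their densities $f(x_k+r_k\cdot)$ converge uniformly to a constant $c_\infty\in[c_0,c_1]$. By Bedford--Taylor continuity under uniform convergence, the limit $u_\infty$ is an entire continuous plurisubharmonic solution of $\det D^2_\C u_\infty=c_\infty$ with $\lambda_0\Id\le D^2_\C u_\infty\le (M/4)\Id$ a.e. I then apply Riebesehl--Schulz, after Evans--Krylov smoothing of $u_\infty$ (which is legitimate since the equation is uniformly elliptic with constant right side and $C^{1,1}$ bounds). This gives constant complex Hessian. The remainder is pluriharmonic with quadratically bounded growth, so it is a quadratic polynomial by the harmonic Liouville theorem. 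Hence $u_\infty$ is quadratic. Uniform convergence on $B_1$ then contradicts the assumed failure, after adjusting the determinant from $c_\infty$ to $f(x_k)$ by a scalar factor. Since $|f(x_k)-c_\infty|\to0$, this changes $P$ by $o(1)$ on $B_1$. The jets of $P_x$ are controlled by $\|u\|_{C^{1}}$ and $M$, which gives \eqref{c2:initial-jets}.

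\textbf{Step 3: conclusion.} Choose $r_0$ also small enough that $\mathcal D_\omega(r_0)\le\delta/2$. After rescaling $B_{\varrho}(x_0)$ to $B_{3/2}$, with the density modulus rescaled accordingly and the Dini tail only improving, Lemma~\ref{c2:local-criterion} with $\lambda=\lambda_0/2$ and $\Lambda=M/2$ gives $u\in C^2$ near $x_0$. Since $x_0$ was arbitrary, $u\in C^2(\Omega)$. The delicate point is Step~2's uniformity in $x$. It comes from the contradiction argument being run over arbitrary sequences $x_k$ in the compact set.
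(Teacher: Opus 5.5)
\textbf{The gap is the compactness claim in Step~2.} A bound on $\Delta_\R u=4\tr D^2_\C u$ controls only the complex Hessian $u_{i\bar j}$. The holomorphic second derivatives $u_{ij}$ are, by Calder\'on--Zygmund, only in $L^p$ for $p<\infty$ (in fact in BMO), not bounded. So ``$\Delta_\R u_k\le M$, $u_k(0)=0$, $D_\R u_k(0)=0$'' gives no bound for $u_k$ on $B_1$: add any harmonic polynomial of degree at least two. For your rescalings $r^{-2}[u(x+ry)-u(x)-rD_\R u(x)\cdot y]$, a bound on $B_1$ uniform in $x$ and in $r\downarrow0$ amounts to $u\in C^{1,1}$, which is essentially what the lemma asserts. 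Your reasoning here uses neither the equation nor the Dini hypothesis, and under a Laplacian bound alone the claim is false: $x_1x_2\log(-\log|x|)$ has continuous Laplacian near the origin, yet its rescalings there grow like $\log\log(1/r)$.

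The same problem reappears when you smooth $u_\infty$ by Evans--Krylov ``with $C^{1,1}$ bounds''. The limit is only $W^{2,p}_{\mathrm{loc}}$ with bounded complex Hessian, so you need a regularity result valid in that class. One option is the paper's construction: a concave, globally uniformly elliptic extension $F$ of $\log\det$, the strong-to-viscosity passage, and Evans--Krylov for concave equations with constant right side.

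Step~2 can be repaired. Let $M_k$ be the mean of $D^2_\R u$ over $B_{r_k}(x_k)$. Besides the affine part, also subtract the pluriharmonic quadratic $\tfrac12r_k^2y^TM_ky-r_k^2\mathcal H(M_k)[y]$; this keeps $u_k$ plurisubharmonic with the same density. Local BMO of $D^2_\R u$ and John--Nirenberg then give $\|D^2_\R u_k\|_{L^p(B_R)}\le C_pR^{2n/p}(1+\log R)$, hence $C^1_{\mathrm{loc}}$ compactness. The limit grows at most like $|y|^2\log|y|$, which still suffices for the harmonic Liouville step after Riebesehl--Schulz. Separately, the real Hessian of $P_x$ is not controlled by $\|u\|_{C^1}$ and $M$; at the fixed radius $r_0$ the jets are instead bounded by $C(r_0)\|u\|_{L^\infty}$.

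Once repaired, your route is a genuine alternative. It is not circular, since Lemma~\ref{c2:local-criterion} does not use the present lemma. It replaces X.-J.~Wang's interior Dini theorem by a blow-up, a Liouville theorem, and the Savin-based iteration. The paper's proof is more direct. From $\kappa\Id\le D^2_\C u\le H\Id$ and $\det D^2_\C u=f$ a.e., it builds $F$ and reads the equation as the viscosity equation $F(D^2_\R u)=\log f$ with Dini right side. Evans--Krylov supplies the homogeneous estimate that Wang's theorem requires, and $u\in C^2$ follows at once. Your approach still needs the concave-extension and viscosity step for the blow-up limit. So it trades the external Dini perturbation theorem for the paper's own perturbation lemma, at the cost of the compactness and Liouville arguments.
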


\begin{proof}
Work on a relatively compact ball on which $f\ge c_0>0$ and
$\tr D^2_\C u\le H$ in distributions, with $H>0$.
The trace bound and Poisson regularity give
$u\in W^{2,p}_{\mathrm{loc}}$ for every finite $p>1$,
and hence a continuous representative.
This representative agrees everywhere with the original
plurisubharmonic function, since its ball averages converge to its
pointwise values. Positivity of the complex Hessian as a current
and its bounded trace give
$0\le D^2_\C u\le H\Id$ almost everywhere.

To identify the determinant, mollify on a smaller ball.
Since $u\in W^{2,p}_{\mathrm{loc}}$ for $p>n$,
the mollified Hessians converge strongly in $L^p$,
so their determinants converge in $L^{p/n}$.
The potentials converge locally uniformly. Bedford--Taylor
continuity and the measure equation in Lemma~\ref{c2:elliptic-dini}
therefore give
\begin{equation}\label{c2:ae-determinant}
\det D^2_\C u=f\quad\text{almost everywhere}.
\end{equation}
Combining \eqref{c2:ae-determinant} with $f\ge c_0$ and the
current bound $0\le D^2_\C u\le H\Id$ gives
\begin{equation}\label{c2:ellipticity}
\kappa\Id\le D^2_\C u\le H\Id,
\qquad \kappa=\frac{c_0}{H^{n-1}}.
\end{equation}

We construct a smooth uniformly elliptic operator that agrees with
$\log\det$ on the Hessian range in \eqref{c2:ellipticity}.
The resulting equation will satisfy the hypotheses of the
interior Dini estimate.

The nonempty Hessian range in \eqref{c2:ellipticity} implies
$0<\kappa\le H$.
Choose $\chi\in C_c^\infty((\kappa/4,4H))$ with
$0\le\chi\le1$ and $\chi=1$ on $[\kappa/2,2H]$.
Extend $b(s)=\chi(s)/s^2$ by zero to $s\le0$, and define
a smooth function $\phi$ on $\R$ by
\[
\phi'(t)=\frac1H-\int_H^t b(s)\,ds,
\qquad \phi(H)=\log H.
\]
For $t\ge H$, the integral is at most
$\int_H^{4H}s^{-2}\,ds=3/(4H)$.
For $t\le H$, the reversed integral is at most
$\int_{\kappa/4}^H s^{-2}\,ds=4/\kappa-1/H$.
Consequently,
\begin{equation}\label{c2:extension-derivatives}
\frac1{4H}\le\phi'(t)\le\frac4\kappa,
\qquad \phi''(t)=-b(t)\le0,
\qquad |\phi''(t)|\le\frac{16}{\kappa^2}
\quad(t\in\R).
\end{equation}
On $[\kappa/2,2H]$, the integration segment joining $H$ and
$t$ lies where $\chi=1$. Thus $\phi'(t)=1/t$ there,
and the normalization at $H$ gives $\phi(t)=\log t$.

Use the projection $\mathcal H$ from \eqref{c2:projection} and set
\begin{equation}\label{c2:concave-extension}
F(N)=\operatorname{tr}\phi\bigl(\mathcal H(N)\bigr),
\qquad N\in\operatorname{Sym}(2n).
\end{equation}
The function $\phi$ is applied to the eigenvalues of the Hermitian
matrix $\mathcal H(N)$. Smooth spectral calculus makes $F$ smooth.
To check concavity and the derivative bounds needed below, let
$A$ be diagonal with eigenvalues $\lambda_i$, and let $Q$ be
Hermitian in the same orthonormal basis. Then
\[
\begin{split}
D^2(\operatorname{tr}\phi)(A)[Q,Q]
={}&\sum_i\phi''(\lambda_i)|Q_{i\bar i}|^2\\
&+2\sum_{i<j}
\frac{\phi'(\lambda_i)-\phi'(\lambda_j)}{\lambda_i-\lambda_j}
|Q_{i\bar j}|^2.
\end{split}
\]
At repeated eigenvalues, the quotient is interpreted as
$\phi''(\lambda_i)$. By \eqref{c2:extension-derivatives}
and the mean value theorem, every coefficient is nonpositive
and has absolute value at most $32/\kappa^2$.
Thus $\operatorname{tr}\phi$, and hence $F$, is concave and
has globally Lipschitz first derivatives.

For a real positive semidefinite matrix $P$, differentiation gives
\[
DF(N)[P]
=\operatorname{tr}\!\left[
\phi'\bigl(\mathcal H(N)\bigr)\mathcal H(P)\right].
\]
The derivative bounds \eqref{c2:extension-derivatives} and
the positivity and trace identities in \eqref{c2:projection} imply
\begin{equation}\label{c2:real-ellipticity}
\frac1{16H}\operatorname{tr}_{\R}P
\le DF(N)[P]
\le\frac1\kappa\operatorname{tr}_{\R}P.
\end{equation}
Integrating along $N+sP$ gives the corresponding finite-increment
inequalities, so $F$ is globally uniformly elliptic on real
symmetric matrices.

By \eqref{c2:ellipticity}, the complex Hessian of $u$ lies
where $\phi=\log$. The determinant identity
\eqref{c2:ae-determinant} therefore gives
\begin{equation}\label{c2:dini-equation}
F(D^2_{\R}u)
=\log\det D^2_{\C}u
=\log f
\quad\text{almost everywhere}.
\end{equation}
Since $u\in W^{2,p}_{\mathrm{loc}}$ for $p>2n$, the
strong-to-viscosity implication for uniformly elliptic equations
makes \eqref{c2:dini-equation} a viscosity equation; see \cite{CCKS}.
On the ball under consideration, $f\ge c_0$ gives
\[
|\log f(x)-\log f(y)|
\le c_0^{-1}|f(x)-f(y)|.
\]
Since $f$ is Dini continuous, the preceding inequality
makes $\log f$ Dini continuous. The derivative bounds \eqref{c2:extension-derivatives} and
the real ellipticity \eqref{c2:real-ellipticity} show that the
operator in \eqref{c2:concave-extension} is concave and globally
uniformly elliptic, with globally Lipschitz first derivatives.
For every real symmetric matrix $M$, the shifted operator
$N\mapsto F(N+M)-F(M)$ has the same structural bounds. The Evans--Krylov
estimate \cite{Evans,Krylov} therefore verifies the uniform homogeneous estimate
required in \cite[Section~3.1]{XJWang}. The interior Dini estimate,
including its viscosity version, then gives $u\in C^2$
on every smaller ball \cite[Theorem~3.1 and Section~3.3(ii)]{XJWang}.
The ellipticity and derivative bounds above depend only on
$n,\kappa,H$. Since the argument applies on every relatively
compact ball, it proves the lemma.
\end{proof}

\subsection{A small averaging remainder gives a real Hessian bound}

We prove the existence of a smallness threshold by letting the
averaging remainder tend to zero in a compactness argument.
Lemma~\ref{c2:elliptic-dini} regularizes the limit, and
Lemma~\ref{c2:local-criterion} transfers its quadratic approximations
to the approximating solutions.

\begin{theorem}[From a pseudo-estimate to a $C^2$ estimate]
\label{thm:c2-transfer}
Fix $n$, $K_0,A>0$, $0<c_0\le c_1$, $0<t_*\le1/4$, and a
Dini modulus $\omega$. There exist $\delta_0>0$ and $C<\infty$,
depending only on these data, such that the following holds.
Let $u\in C(B_2)$ be plurisubharmonic and satisfy
\[
(\sqrt{-1}\partial\overline{\partial}u)^n=f\omega_0^n,\qquad
\|u\|_{L^\infty(B_2)}\le K_0,\qquad c_0\le f\le c_1,
\]
with $|f(x)-f(y)|\le\omega(|x-y|)$ for $|x-y|\le1$.
If
\begin{equation}\label{c2:small-remainder}
0\le\mathcal G_tu(z)-u(z)\le At^2+\delta
\quad(z\in B_{3/2},\ 0<t<t_*),\qquad 0\le\delta\le\delta_0,
\end{equation}
then $u\in C^2(B_1)$ and $\|u\|_{C^2(B_1)}\le C$,
where the norm uses all real derivatives of order at most two.
\end{theorem}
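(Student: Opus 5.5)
We argue by contradiction and compactness, as the subsection heading announces. Suppose the conclusion fails for every $\delta_0>0$ and $C<\infty$. Then there are solutions $u_k\in C(B_2)$ with densities $f_k$ satisfying the stated bounds, and numbers $\delta_k\downarrow0$ with \eqref{c2:small-remainder} holding for $\delta=\delta_k$. For these, either $u_k\notin C^2(B_1)$ or $\|u_k\|_{C^2(B_1)}\to\infty$.

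\textbf{Step 1: extract a limit.} The functions $u_k$ are uniformly bounded plurisubharmonic functions. The densities $f_k$ are equicontinuous with modulus $\omega$ and lie in $[c_0,c_1]$, so Arzel\`a--Ascoli gives $f_k\to f_\infty$ locally uniformly, with the same modulus and bounds. For $u_k$ we want local uniform convergence, but plurisubharmonic compactness alone gives only $L^1_{\mathrm{loc}}$ convergence. The remainder bound supplies the missing equicontinuity. By \eqref{eq:weighted-monotone} and \eqref{eq:Green-kernel}, \eqref{c2:small-remainder} says that the measure $\mu_k=\Delta_\R u_k$ has mass at most $C(At^2+\delta_k)t^{2n-2}$ on each ball $B_t(z)$ with $t<t_*$, up to the Green kernel comparison. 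Hence $\mu_k(B_t(z))\le C t^{2n}+C\delta_k t^{2n-2}$. Splitting the Riesz potential of $\mu_k$ at scales above and below $\delta_k^{1/2}$ should give a uniform modulus of continuity for $u_k$ on $B_{7/4}$, of order $t^2+\delta_k$ up to logarithms.

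With this modulus, a subsequence converges locally uniformly to some $u_\infty$. By Bedford--Taylor continuity under uniform convergence, $u_\infty$ solves $(\sqrt{-1}\partial\overline\partial u_\infty)^n=f_\infty\omega_0^n$. Passing to the limit in \eqref{c2:small-remainder} gives $0\le\mathcal G_tu_\infty-u_\infty\le At^2$ for $t<t_*$. Through \eqref{eq:Green-kernel} this shows that $\Delta_\R u_\infty$ is a bounded distribution, of size at most $CA$. Lemma~\ref{c2:elliptic-dini} then gives $u_\infty\in C^2(B_{3/2})$, and by \eqref{c2:ellipticity} we have $\kappa\Id\le D^2_\C u_\infty\le H\Id$ with $\kappa,H$ depending only on $c_0,c_1,A$.

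\textbf{Step 2: transfer quadratic approximations back to $u_k$.} For $x\in\overline{B_{5/4}}$, take $P_x$ to be the second-order Taylor polynomial of $u_\infty$ at $x$. Its constant complex Hessian must be adjusted so that $\det D^2_\C P_x=f_k(x)$. We do this by multiplying the Hessian by $(f_k(x)/f_\infty(x))^{1/n}$, a factor tending to $1$ uniformly. The bounds $\lambda\Id\le D^2_\C P_x\le\Lambda\Id$ then hold with $\lambda,\Lambda$ fixed from $\kappa,H$, and the jet bound \eqref{c2:initial-jets} holds with $J$ fixed. Uniform continuity of $D^2_\R u_\infty$ on $\overline{B_{5/4}}$ lets us pick $r_0$ with $\|u_\infty-P_x\|_{L^\infty(B_{r_0}(x))}\le(\delta/4)r_0^2$ and $\mathcal D_\omega(r_0)\le\delta/4$, where $\delta$ is the threshold of Lemma~\ref{c2:local-criterion}. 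For $k$ large, the uniform convergence $u_k\to u_\infty$ and the Hessian rescaling add at most $(\delta/4)r_0^2$. So $u_k$ satisfies \eqref{c2:initial-quadratics} with $e_0\le3\delta/4$. Lemma~\ref{c2:local-criterion} then gives $u_k\in C^2(B_{5/4})$ with $\|u_k\|_{C^2}\le C(J+\delta)$, uniformly in $k$, which contradicts the choice of $u_k$.

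\textbf{Main obstacle.} The delicate point is Step 1: obtaining locally uniform convergence of $u_k$, not merely $L^1_{\mathrm{loc}}$ convergence, so that the Taylor approximations of $u_\infty$ transfer in sup norm. The term $\delta_k$ in the remainder is independent of $t$. Consequently the mass bound on $\Delta_\R u_k$ degenerates at scales below $\sqrt{\delta_k}$, and we must check that the Riesz-potential modulus estimate survives this degeneration with a bound of order $\delta_k$ only.

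If that estimate proves awkward, a fallback is available. Since $f_k\ge c_0$, the $u_k$ are bounded continuous solutions with densities bounded above. Bedford--Taylor stability then gives convergence in capacity. Combined with Hartogs' lemma, the upper envelope bound $u_k\le\mathcal G_tu_k\le\mathcal G_tu_\infty+o(1)\le u_\infty+At^2+o(1)$, together with the lower semicontinuity available from the comparison principle, would yield uniform convergence.
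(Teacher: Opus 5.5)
The gap is in Step~1, at the point you flagged, and the route you propose there does not work. The rest of your architecture matches the paper's proof: contradiction, compactness, regularity of the limit via Lemma~\ref{c2:elliptic-dini}, then Lemma~\ref{c2:local-criterion} applied to $u_k$ with transferred Taylor polynomials. Step~2 is fine, and rescaling the quadratic part by $(f_k(x)/f_\infty(x))^{1/n}$ works as well as the paper's correction $a_{j,x}|z-x|^2$.

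Converting \eqref{c2:small-remainder} into the mass bound $\mu_k(B_t(z))\le Ct^{2n}+C\delta_kt^{2n-2}$ discards exactly the information you need. The rate $s^{2n-2}$ is critical for the Newtonian kernel in $\R^{2n}$, and such a bound allows families with no asymptotic equicontinuity. For example, take $u_\delta=\max(\delta\log|z_1|,-1)$, which is continuous, plurisubharmonic and bounded by $1$ on $B_2$. Its Riesz measure is $(\delta/r_\delta)$ times arc length on $\{|z_1|=r_\delta\}$, with $r_\delta=e^{-1/\delta}$, tensored with Lebesgue measure in the remaining variables. Hence $\mu(B_s(z))\le C\delta s^{2n-2}$ for all $z$ and $s$. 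Yet $u_\delta$ rises by $1/2$ between $|z_1|=e^{-1/\delta}$ and $|z_1|=e^{-1/(2\delta)}$. So no splitting of the Riesz potential driven only by this mass bound can give a modulus that becomes uniform as $\delta\to0$.

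Your fallback does not close the gap either. Convergence in capacity for local solutions without any control of boundary values is not a standard Bedford--Taylor consequence. The appeal to lower semicontinuity from the comparison principle also has no boundary ordering to compare against.

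The missing idea is to use \eqref{c2:small-remainder} pointwise rather than through $\Delta_\R u_k$. It gives $0\le\mathcal G_tu_k-u_k\le At^2+\delta_k$ on $B_{3/2}$. For fixed $t<t_*$, the operator $\mathcal G_t$ is convolution with a fixed $L^1$ kernel, by \eqref{eq:Green-kernel}. Together with $\|u_k\|_{L^\infty(B_2)}\le K_0$ and $L^1$-continuity of translation, this makes $\{\mathcal G_tu_k\}_k$ equicontinuous on $B_{3/2}$. Hence
\[
|u_k(z)-u_k(z')|\le|\mathcal G_tu_k(z)-\mathcal G_tu_k(z')|+2At^2+2\delta_k .
\]
Choosing $t$ small and then $k$ large gives asymptotic equicontinuity. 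Since each individual $u_k$ is continuous, Arzel\`a--Ascoli then yields a locally uniformly convergent subsequence; this is the paper's argument.

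The same two-sided bound $\mathcal G_tu_k-At^2-\delta_k\le u_k\le\mathcal G_tu_k$ is also what the lower half of your fallback chain was missing. Once you have uniform convergence, the rest of your proposal goes through as written.
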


\begin{proof}
If no such threshold and bound exist, choose solutions $u_j,f_j$
with the fixed data and $\delta_j\to0$, such that either
$u_j\notin C^2(B_1)$ or $\|u_j\|_{C^2(B_1)}>j$.
For fixed $t$, convolution with the $L^1$ kernel of $\mathcal G_t$
sends the uniformly bounded $u_j$ to an equicontinuous family.
Estimate \eqref{c2:small-remainder} shows that $u_j$ differs from
this family by at most $At^2+\delta_j$.
Choosing $t$ small and then $j$ large proves local uniform
precompactness on $B_{3/2}$. The common modulus gives the same
compactness for $f_j$. After passing to a subsequence,
\begin{equation}\label{c2:compact-limit}
u_j\longrightarrow u_\infty,\qquad
f_j\longrightarrow f_\infty
\quad\text{locally uniformly on }B_{3/2}.
\end{equation}
The local uniform convergence \eqref{c2:compact-limit} and
Bedford--Taylor continuity give
$(\sqrt{-1}\partial\overline{\partial}u_\infty)^n=f_\infty\omega_0^n$.

By \eqref{c2:small-remainder}, \eqref{c2:compact-limit},
and $\delta_j\to0$, the limit satisfies
$0\le\mathcal G_tu_\infty-u_\infty\le At^2$
on each compact subset for sufficiently small $t$.
For a nonnegative smooth compactly supported test function $\varphi$,
symmetry of the averaging kernel yields
\[
\int u_\infty\frac{\mathcal G_t\varphi-\varphi}{t^2}\,d\xi
=\int\varphi\frac{\mathcal G_tu_\infty-u_\infty}{t^2}\,d\xi
\le A\int\varphi\,d\xi.
\]
The expansion \eqref{eq:Green-expansion} therefore gives
\begin{equation}\label{c2:limit-trace}
0\le\Delta_\R u_\infty\le8(n+1)A
\quad\text{in distributions on }B_{3/2}.
\end{equation}
The limit equation, \eqref{c2:limit-trace}, and
Lemma~\ref{c2:elliptic-dini} give $u_\infty\in C^2(B_{3/2})$.
Its complex Hessian lies in a fixed positive cone by
\eqref{c2:ellipticity}, using $H=1+2(n+1)A$.

Let $E=\overline{B_{5/4}}$, and let $P_x$ be the quadratic Taylor
polynomial of $u_\infty$ at $x\in E$.
Continuity of its real Hessian implies
\begin{equation}\label{c2:uniform-taylor}
\sup_{x\in E}r^{-2}
\|u_\infty-P_x\|_{L^\infty(B_r(x))}\longrightarrow0
\quad(r\downarrow0).
\end{equation}
By \eqref{c2:ellipticity} and \eqref{c2:limit-trace},
choose $0<\lambda\le\Lambda$ so that
\begin{equation}\label{c2:limiting-cone}
2\lambda\Id\le D^2_\C P_x\le\frac{\Lambda}{2}\Id
\qquad(x\in E).
\end{equation}
These margins leave room for the small corrections introduced below.
We take the smallness threshold from Lemma~\ref{c2:local-criterion}
with parameters $\lambda$ and $\Lambda$.
We then choose one $r_0<1/8$ so that the error in
\eqref{c2:uniform-taylor} and $\mathcal D_\omega(r_0)$
are small enough for that threshold. We must also adjust each
initial polynomial so that its complex Hessian has determinant $f_j(x)$.
The limit equation gives $\det D^2_\C P_x=f_\infty(x)$.
Together with \eqref{c2:limiting-cone} and
\eqref{c2:compact-limit}, this yields numbers $a_{j,x}\to0$,
uniformly for $x\in E$, such that
\begin{equation}\label{c2:corrected-polynomials}
P_{j,x}=P_x+a_{j,x}|z-x|^2,\qquad
\det D^2_\C P_{j,x}=f_j(x).
\end{equation}
Indeed, the determinant of $D^2_\C P_x+a\Id$ is strictly
increasing in $a$ near zero, with a uniform positive derivative.
At the fixed radius $r_0$,
\begin{equation}\label{c2:initial-transfer}
\begin{split}
r_0^{-2}\|u_j-P_{j,x}\|_{L^\infty(B_{r_0}(x))}
\le{}&r_0^{-2}\|u_j-u_\infty\|_{L^\infty(B_{11/8})}\\
&+r_0^{-2}\|u_\infty-P_x\|_{L^\infty(B_{r_0}(x))}
+|a_{j,x}|.
\end{split}
\end{equation}
By \eqref{c2:compact-limit}, the choice of $r_0$ using
\eqref{c2:uniform-taylor}, and $a_{j,x}\to0$ uniformly,
the right side is uniformly small for all $x\in E$ when $j$ is large.
Since $u_\infty\in C^2$ near $E$ and the corrections in
\eqref{c2:corrected-polynomials} tend uniformly to zero,
the real coefficients of $P_{j,x}$ are uniformly bounded.
The margins \eqref{c2:limiting-cone} also give
\[
\lambda\Id\le D^2_\C P_{j,x}\le\Lambda\Id.
\]
The radius $r_0$ and the coefficient bound may depend on the chosen
limit $u_\infty$, but they are fixed along this subsequence.
The smallness in \eqref{c2:initial-transfer}, the determinant
identity in \eqref{c2:corrected-polynomials}, and the preceding
Hessian bounds verify Lemma~\ref{c2:local-criterion}.
It gives $C^2$ regularity and a real Hessian bound independent of $j$ on $B_{5/4}$.
Interior interpolation, together with $\|u_j\|_\infty\le K_0$,
bounds their gradients on $B_1$. This contradicts the choice
of the sequence and proves the theorem.
\end{proof}

\subsection{Apply the criterion to the distant-boundary remainder}

\begin{proof}[Proof of Theorem~\ref{thm:real-c2}]
We apply Theorem~\ref{thm:c2-transfer} to the continuous
exact-density replacements from Section~\ref{sec:approximation}.
Choose the parameters of the positive-density estimate once from
the fixed growth buffers and density bounds used there. On $B_2$,
the growth bounds for the replacements and the normalization
\eqref{eq:wg-normalization} give constants $W,K_0<\infty$ such that
\[
0<V_j=v_j+J-a\rho_Ah\le W,\qquad
\|v_j\|_{L^\infty(B_2)}\le K_0
\]
for every sufficiently large $j$. The constants depend only on the
data in Theorem~\ref{thm:real-c2}, not on $j$ or $L$.

Set $A_*=2W^{\varepsilon_+}$ and $t_*=1/4$.
Theorem~\ref{thm:c2-transfer}, with fixed data
$n,K_0,A_*,c_0,c_1,\omega,t_*$, supplies a threshold $\delta_0>0$.
Choose $L_0$ large enough that the positive-density estimate is
valid on $B_2$ and
\begin{equation}\label{eq:real-c2-radius}
E_+Wq_{L_0}\le\delta_0.
\end{equation}
The formula \eqref{eq:wg-qL} gives $q_L\downarrow0$;
all other constants in \eqref{eq:real-c2-radius} are already fixed.

For $L\ge L_0$, Section~\ref{sec:approximation} gives continuous
exact-density replacements $v_j\downarrow u$ on a neighborhood
of $\overline{B_{L+1}}$. The estimate
\eqref{eq:replacement-positive-pseudo} gives
\[
0\le\mathcal G_t v_j-v_j
\le A_*t^2+E_+Wq_L\le A_*t^2+\delta_0
\quad\text{on }B_{3/2},\qquad0<t<t_*.
\]
Together with the fixed density bounds and modulus,
this verifies Theorem~\ref{thm:c2-transfer}, which yields
$\|v_j\|_{C^2(B_1)}\le H$ with $H$ independent of $j,L$.

These bounds and the pointwise convergence imply local $C^1$
convergence along a subsequence. Thus $u\in C^{1,1}_{\mathrm{loc}}(B_1)$,
and its weak real Hessian has the same uniform bound.
Its complex Hessian has bounded trace and positive Dini-continuous
Bedford--Taylor density. Lemma~\ref{c2:elliptic-dini} gives
$u\in C^2(B_1)$, and the weak derivative bounds become classical
derivative bounds. This proves the theorem.
\end{proof}

\begin{proof}[Proof of Corollary~\ref{cor:interior-smoothness}]
Theorem~\ref{thm:real-c2} gives $u\in C^2(B_1)$.
Set $A=D^2_\C u$. Since $u\in C^2$ and $f$ is continuous,
the equality of Monge--Amp\`ere measures gives $\det A=f$
pointwise. Plurisubharmonicity gives $A\ge0$, and
$\det A=f\ge c_0>0$ makes $A$ positive definite.
The $C^2$ bound from Theorem~\ref{thm:real-c2} bounds its
eigenvalues from above; their product $f\ge c_0$ then bounds
each eigenvalue from below. Since $f$ is smooth and positive,
Wang's interior estimate \cite[Theorem~1.1]{Wang}, applied on
smaller balls, gives
$u\in C^{2,\beta}_{\mathrm{loc}}(B_1)$ for some $\beta\in(0,1)$.

To obtain one more derivative, fix a real coordinate direction
$e$ and set $v_h(z)=(u(z+he)-u(z))/h$ for small $h\ne0$.
The fundamental theorem of calculus applied to $\log\det$
along the positive-definite segment joining $A(z)$ and $A(z+he)$ gives
\begin{equation}\label{eq:smooth-difference-quotient}
\begin{aligned}
\tr\bigl(a_h(z)D^2_\C v_h(z)\bigr)
&=\frac{\log f(z+he)-\log f(z)}h,\\
a_h(z)&=\int_0^1\bigl((1-t)A(z)+tA(z+he)\bigr)^{-1}\,dt.
\end{aligned}
\end{equation}
Theorem~\ref{thm:real-c2} and $\det A=f\ge c_0$ bound
the eigenvalues of $A$ above and below. Together with
$A\in C^\beta_{\mathrm{loc}}(B_1)$, these bounds make $a_h$
uniformly elliptic with locally bounded $C^\beta$ norm,
independently of small $h$.
The smoothness and positivity of $f$ give the same local
$C^\beta$ bound for the right-hand side of
\eqref{eq:smooth-difference-quotient}, and $u\in C^1$ bounds $v_h$.
Interior Schauder estimates \cite[Chapter~6]{GT} therefore bound
$v_h$ in $C^{2,\beta}$ on smaller compact subsets.
Passing to $h\to0$ gives $\partial_eu\in C^{2,\beta}_{\mathrm{loc}}(B_1)$.
The coefficients $A^{-1}$ now belong to $C^{1,\beta}$,
and $\log f$ is smooth. Differentiating the resulting linear
equations and successively applying interior Schauder estimates
\cite[Chapter~6]{GT} gives derivatives of every order. Thus $u\in C^\infty(B_1)$.
\end{proof}

\section{Proof of the Liouville theorem}\label{sec:regularity}
We first use the $C^2$ interior estimate to recover smoothness.
The averaging estimate then gives a pointwise trace bound, to which
we apply a quantitative Calabi estimate. Finally, the lower growth
bound controls the harmonic remainder.

\subsection{Smoothness and the trace bound}
Let $u$ satisfy the hypotheses of Theorem~\ref{thm:main}.
For each $a\in\C^n$, the ratios $\rho(z+a)/\rho(z)$ and
$\psi(z+a)/\psi(z)$ are positive continuous functions that tend to
one as $|z|\to\infty$. They are therefore bounded above and below
by positive constants depending on $a$.
Consequently, \eqref{eq:growth} gives the same growth condition for
$u_a(z)=u(z+a)$, with some $0<c_a\le1$, $C_a\ge1$, and the same
$\alpha$. Translation preserves \eqref{eq:BT-equation}, so
Theorem~\ref{thm:real-c2} and Corollary~\ref{cor:interior-smoothness},
applied to $u_a$ with $f\equiv1$ on a sufficiently large ball, give
$u_a\in C^\infty(B_1)$. Since $a$ is arbitrary,
$u\in C^\infty(\C^n)$.

Now apply Theorem~\ref{thm:finite-ball} to $u$ with its original
growth constants $c,C$. Since $K=0$ and $q_L\to0$ by
\eqref{eq:wg-qL}, letting $L\to\infty$ at fixed $z,t$ gives
\eqref{eq:global-average}. Smoothness and the expansion
\eqref{eq:Green-expansion} then give
\begin{equation}\label{eq:pointwise-average-trace}
\tr D^2_\C u(z)
=2(n+1)\lim_{t\downarrow0}\frac{\mathcal G_tu(z)-u(z)}{t^2}
\le4(n+1)V(z)^{\varepsilon_-}.
\end{equation}
This proves \eqref{eq:global-trace} and
Corollary~\ref{thm:constant-density}.

\subsection{Make the complex Hessian constant}
The normalization \eqref{eq:wg-normalization}, with
$s=\log\rho_A$ from \eqref{eq:wg-fixed-notation}, gives
$V\le C_*\rho_A(1+\log\rho_A)^{1-\alpha}$.
Consequently, \eqref{eq:global-trace} implies that for every
$\theta>0$ there is $H_\theta<\infty$ such that
\begin{equation}\label{eq:small-power-trace}
\tr D^2_\C u\le H_\theta(1+|z|^2)^{\varepsilon_-+\theta}.
\end{equation}
Here we used $(\log r)^b\le C_{b,\theta}r^\theta$ for
$r\ge e$, with the fixed exponent $b=\varepsilon_-(1-\alpha)$.

The quantitative Calabi estimate
\cite[Proposition~2.4]{LiLiZhang}, applied to
$u_R(z)=R^{-2}u(Rz)$ on $B_2$, together with
\eqref{eq:small-power-trace}, gives
\begin{equation}\label{eq:calabi-decay}
\sup_{B_R}|D_\R(u_{i\bar j})|
\le\frac{C_n}{R}
\left(\sup_{B_{2R}}\tr D^2_\C u\right)^{n+1}
\le C_\theta R^{-1+2(n+1)(\varepsilon_-+\theta)}.
\end{equation}
With $\varepsilon_-=1/[4(n+1)]$ as in
Theorem~\ref{thm:finite-ball}, choose
$0<\theta<1/[4(n+1)]$. The last exponent is negative.
For each fixed point, sending $R\to\infty$ in
\eqref{eq:calabi-decay} shows that $D^2_\C u=A$ is constant.

\subsection{Recover the quadratic potential}\label{sec:harmonic-remainder}
By \eqref{eq:calabi-decay}, $D^2_\C u=A$ is constant.
Set $v=u-A[z]$. Then $D^2_\C v=0$, so $v$ is real harmonic.
The lower bound \eqref{eq:growth} gives
$v\ge-A[z]\ge-C_*(1+|z|^2)$.
At any fixed point $x$, the mean-value identity therefore gives
\begin{equation}\label{eq:harmonic-mass}
\int_{B_R(x)}|v|
=|B_R|v(x)+2\int_{B_R(x)}v^-
\le C_x R^{2n+2}\qquad(R\ge1).
\end{equation}
The interior harmonic derivative estimate applied to
\eqref{eq:harmonic-mass} yields
\[
|D^3_\R v(x)|\le C_nR^{-2n-3}
\int_{B_R(x)}|v|\le C_x/R\longrightarrow0.
\]
Thus $v$, and hence $u$, is a real polynomial of degree at most two.
The determinant equation excludes degree less than two.
This proves Theorem~\ref{thm:main}.

\appendix
\section{An alternative geometric proof for the quadratic-growth special case}
\label{sec:geometric-appendix}

This appendix gives an alternative proof of the Liouville theorem
for smooth entire solutions with two-sided quadratic growth.
We obtain a global complex-Hessian bound from a Green-function
probability measure supported where the metric is controlled.
A Calabi estimate then makes the metric constant, and a harmonic
estimate recovers the potential.
The Bedford--Taylor case still requires the regularity established
in Sections~\ref{sec:approximation}--\ref{sec:regularity}; the
argument below can also be used once that regularity is known.

\begin{theorem}[Smooth quadratic-growth rigidity]
\label{thm:appendix-smooth}
Let $u\in C^\infty(\C^n)$ be strictly plurisubharmonic and satisfy
\[
\det D^2_{\C}u=1,\qquad m\varrho\le u\le M\varrho,\qquad
\varrho=1+|z|^2,\qquad 0<m\le M.
\]
Then $D^2_{\C}u$ is constant and $u$ is a real quadratic polynomial
whose real quadratic part is positive definite.
\end{theorem}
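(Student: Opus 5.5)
The plan is to follow the appendix's announced route: obtain a global bound on $\tr D^2_\C u$ from a Green-function averaging argument, apply the quantitative Calabi estimate to make the complex Hessian constant, and then finish with the harmonic remainder argument of Section~\ref{sec:harmonic-remainder}.

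\textbf{Step 1: a global trace bound.} The growth assumption $m\varrho\le u\le M\varrho$ is the case $\alpha=1$ of \eqref{eq:growth} with the constants of Remark~\ref{rem:quadratic-growth-case}, so the machinery of Section~\ref{sec:smooth} is available. I would not use \eqref{eq:global-trace} directly, because it only gives growth like $V^{\varepsilon_-}$. Instead I would exploit the scale invariance of the quadratic class: for every $R\ge1$ the rescaled function $u_R(z)=R^{-2}u(Rz)$ again solves $\det D^2_\C u_R=1$ and satisfies
\[
m(R^{-2}+|z|^2)\le u_R\le M(R^{-2}+|z|^2),
\]
with constants independent of $R$.

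On $B_2$, subtract the affine function $\ell_R$ determined by $u_R$. The difference $w_R=u_R-\ell_R$ should satisfy $w_R\ge \frac m2|z|^2-C$ and $w_R\le C$ on $B_2$, uniformly in $R$. Its sublevel set $\{w_R<m/8\}$ is then contained in a fixed ball and contains a fixed smaller ball.

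On this section I would run the maximum-principle argument of Section~\ref{sec:wg-maximum}:
\begin{itemize}
\item use a Green-function average $\mathcal G_t$ at a fixed scale $t$;
\item compare $Q_t$ with the test function $(m/8-w_R)$, which plays the role of $D$ and vanishes on the boundary, so no distant-boundary term appears;
\item use \eqref{eq:wg-good-averaging} together with $\mathcal L w_R=n$ and $\mathcal L|z|^2=S$.
\end{itemize}
This should give $\tr D^2_\C u_R\le \Lambda$ on $B_{r_0}$, with $\Lambda$ depending only on $n,m,M$.

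\textbf{Anticipated obstacle.} The step I expect to be the main obstacle is converting the averaging bound into a pointwise trace bound without the $q_L$ remainder. My first attempt would be a Pogorelov-type weight: maximize $(m/8-w_R)^\gamma\,\tr D^2_\C u_R$ directly, using concavity of $\log\det$ and the fact that the equation has constant right-hand side, so $\mathcal L(\tr D^2_\C u)\ge0$ modulo third-order terms that are absorbed by the standard Yau computation. If the gradient terms cannot be closed, I would fall back on the Green average and \eqref{eq:weighted-derivative}. In that route, the weighted interior maximum is excluded exactly as in \eqref{eq:wg-max-contradiction}, with $D$ replaced by the section function, whose linearization satisfies $\mathcal L(-w_R)=-n$.

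\textbf{Step 2: constancy of the Hessian.} Scaling back, Step 1 gives $\tr D^2_\C u\le\Lambda$ on all of $\C^n$, since the center can be translated and the rescaled bounds are uniform. Then \eqref{eq:calabi-decay} with exponent zero gives $\sup_{B_R}|D_\R(u_{i\bar j})|\le C\Lambda^{n+1}/R\to0$. Hence $D^2_\C u=A$ is constant, and positive definite because $\det A=1$.

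\textbf{Step 3: the quadratic potential.} Set $v=u-A[z]$, which is harmonic and bounded below by $-C\varrho$. The argument of \eqref{eq:harmonic-mass} then shows that $v$ has degree at most two, so $u$ is a real quadratic polynomial.

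For positivity of the real quadratic part $P$: the lower bound $u\ge m\varrho$ forces $P(x)\ge m|x|^2$ along every ray, hence $P$ is positive definite.
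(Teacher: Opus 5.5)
Your Steps~2 and~3 are fine, but Step~1 has a genuine gap, and it is the whole difficulty. What you need there is a bound for $\tr D^2_\C w_R$ using only information on a single bounded section $\Omega=\{w_R<m/8\}$ of a plurisubharmonic solution. That is a complex Pogorelov estimate, and neither of your routes produces one.

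In the Green-average route, $D=m/8-w_R$ does satisfy $\mathcal LD=-n<0$, so the interior contradiction of \eqref{eq:wg-max-contradiction} survives. But $D=0$ on $\partial\Omega$, and nothing forces $H=Q_t-2t^2B(V)\le0$ there; that is what you are trying to prove. So $H/D$ is unbounded near $\partial\Omega$, and ruling out a positive interior maximum gives no bound. The boundary term has not disappeared; it has become infinite. In the paper, $D\ge\frac12w_qV$ stays positive up to $\partial\Omega_q$, and the boundary is reached only where \eqref{eq:wg-boundary-quotient} gives $H/D\le Cq$. That is exactly where the $q_L$ remainder comes from, and why Theorem~\ref{thm:real-c2} needs growth bounds on $B_{2L}$ with $L$ large.

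If you instead use the section function as a multiplier, as in your Pogorelov route, the maximum of $\gamma\log(m/8-w_R)+\log\tr D^2_\C w_R$ produces the term $-\gamma|\partial w_R|_g^2/(m/8-w_R)^2$. Yau's computation for $\det D^2_\C w_R=1$ gives only $\mathcal L\log\tr D^2_\C w_R\ge0$, with no surplus to absorb it. The real Pogorelov proof finds that surplus in the full symmetry of real third derivatives at the top eigenvalue. Complex Hessians have only the partial symmetry $u_{i\bar jk}=u_{k\bar ji}$, which is why weighted complex estimates such as \cite{CHSPogorelov} need an additional matrix condition.

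The paper never localizes to a section. For each $x$ it builds the Green probability measure $\mu_x$ of $-\Delta_g+kS$ on all of $\C^n$, supported in $\{S\le2\ell\}$. Its mass is controlled by the barrier $u-\frac m2|z|^2$, and its boundary contributions vanish for functions of quadratic growth such as translates of $u$. The arithmetic--geometric mean inequality for translates, together with disc averages, then gives $g(x)\le\int g\,d\mu_x\le(2\ell)^{n-1}g_0$.

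Your scaling idea can still be saved if the local bound comes from the paper's large-ball estimates rather than from sections. The function $u_R+me$ satisfies \eqref{eq:growth} on all of $\C^n$ with $c=\min\{1,m\}$ and $C=\max\{1,2M\}$, independent of $R$. Hence \eqref{eq:global-trace} bounds $\tr D^2_\C u_R$ on $B_1$ uniformly in $R$. Even without scaling, \eqref{eq:global-trace} suffices, contrary to your remark: growth like $V^{\varepsilon_-}$ already makes the exponent in \eqref{eq:calabi-decay} negative, which is exactly how Section~\ref{sec:regularity} proceeds.
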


Write $g=D^2_{\C}u$ and let $g_0=\Id_n$ be the flat Hermitian
metric. All comparisons at different points use the fixed flat
coordinates, or equivalently parallel transport for the flat
connection $D$. In this appendix we write
\begin{equation}\label{eq:app-laplacian}
\Delta_g\varphi=\tr(g^{-1}D^2_{\C}\varphi),\qquad
S=\tr(g^{-1}g_0).
\end{equation}
Thus $\Delta_g$ is the complex metric Laplacian, whereas
$\Delta_\R$ denotes the real Laplacian. Constants denoted by $C$
may change from line to line. The constants in the final uniform
metric comparison depend only on $n,m$.

\subsection{Construct a probability measure on a controlled region}
\label{sec:appendix-measure}

We construct a probability measure $\mu$, supported where $S$
is bounded, and derive the Green representation
\eqref{eq:app-green-limit}. For smooth $\varphi$ satisfying
$|\varphi|\le C\varrho$ and $\Delta_g\varphi\ge0$, this formula
expresses $\varphi(x)$ as its average minus a nonnegative integral.
The quadratic lower bound supplies a positive barrier, and the
upper bound will later justify applying the formula to translates
of $u$. We first work on bounded domains and show that the boundary
contribution vanishes for functions of quadratic growth.

By the determinant condition in Theorem~\ref{thm:appendix-smooth},
the arithmetic--geometric mean inequality gives $S\ge n$.
Direct differentiation gives
\begin{equation}\label{eq:app-basic}
\Delta_g u=n,\qquad \Delta_g\varrho=S,\qquad
|\partial\varrho|_g^2\le S\varrho.
\end{equation}
To construct a positive barrier, set $\ell=\max\{n,4n/m\}$
and choose a smooth cutoff $\chi:[n,\infty)\to[0,1]$
equal to one on $[n,\ell]$ and zero on $[2\ell,\infty)$. Put
\begin{equation}\label{eq:app-barrier-definition}
V=u-\frac m2|z|^2,\qquad k=\frac{2\chi(S)}{m\varrho}.
\end{equation}
The lower growth bound in Theorem~\ref{thm:appendix-smooth} gives
$V\ge m\varrho/2$.
If $S\le\ell$, then $\chi(S)=1$, so $kV\ge1$ and $n/S\le1$.
If $S\ge\ell$, then $n/S\le m/4$ and $kV\ge0$.
Using $\Delta_gV=n-mS/2$ from
\eqref{eq:app-basic} and \eqref{eq:app-barrier-definition},
these two cases imply
\begin{equation}\label{eq:app-barrier}
(-\Delta_g+kS)V\ge\frac m4S.
\end{equation}

Fix $x\in\C^n$. For $L$ large enough that $x\in\Omega_L
=\{\varrho<L^2\}$, let $G_L(x,y)$ be the positive Dirichlet
Green function of $-\Delta_g+kS$, normalized by
$(-\Delta_{g,y}+kS(y))G_L(x,y)=\delta_x$
with respect to $d\mathrm{vol}_g(y)$. Define
\[
d\nu_L(y)=S(y)G_L(x,y)\,d\mathrm{vol}_g(y),
\]
and let $d\beta_L$ be the corresponding positive conormal
boundary measure. These measures exist on each bounded domain
because the coefficients are smooth and elliptic there, and
$kS\ge0$.
The metric Laplacian is self-adjoint with respect to its
volume measure. Green's formula therefore reads
\begin{equation}\label{eq:app-green}
\varphi(x)=
\int_{\Omega_L}\left(k\varphi-\frac{\Delta_g\varphi}{S}\right)d\nu_L
+\int_{\partial\Omega_L}\varphi\,d\beta_L
\end{equation}
for smooth $\varphi$ on $\overline{\Omega_L}$.
Apply \eqref{eq:app-green} to $V$. Its positivity,
\eqref{eq:app-barrier}, and positivity of the boundary measure give
\begin{equation}\label{eq:app-green-mass-bound}
\nu_L(\C^n)\le\frac{4V(x)}m.
\end{equation}
Here the measures on $\Omega_L$ are extended by zero.
Domain monotonicity of positive Dirichlet Green functions and
\eqref{eq:app-green-mass-bound} show that $\nu_L$ increases
to a finite measure $\nu$.

Testing \eqref{eq:app-green} with $1$ and with $\varrho$, using
$\Delta_g\varrho=S$ from \eqref{eq:app-basic} and
$\varrho=L^2$ on $\partial\Omega_L$, gives
\begin{align}
\int k\,d\nu_L+\int d\beta_L&=1,
\label{eq:app-mass}\\
\int k\varrho\,d\nu_L+L^2\int d\beta_L
&=\varrho(x)+\nu_L(\C^n).
\label{eq:app-moment}
\end{align}
To control the second boundary term, choose
$\vartheta\in C^\infty([0,\infty))$, with
$0\le\vartheta\le1$, equal to zero on $[0,1/4]$
and to one on $[3/4,\infty)$. Define
\begin{equation}\label{eq:app-boundary-cutoff}
\vartheta_L=L^2\vartheta(\varrho/L^2).
\end{equation}
For large $L$, $\vartheta_L(x)=0$ and
$\vartheta_L=L^2$ on $\partial\Omega_L$.
The chain rule, \eqref{eq:app-basic}, and
\eqref{eq:app-boundary-cutoff} give
\begin{equation}\label{eq:app-boundary-cutoff-laplacian}
\frac{\Delta_g\vartheta_L}{S}
=\vartheta'(\varrho/L^2)
+\frac{\vartheta''(\varrho/L^2)}{L^2}
  \frac{|\partial\varrho|_g^2}{S}
\le C\,\mathbf 1_{\{\varrho\ge L^2/4\}}
\quad\hbox{on }\Omega_L.
\end{equation}
Insert $\vartheta_L$ in \eqref{eq:app-green}. The bound
\eqref{eq:app-boundary-cutoff-laplacian}, together with
$k\vartheta_L\ge0$ and the domain-monotonicity comparison
$\nu_L\le\nu$, gives
\begin{equation}\label{eq:app-boundary}
0\le L^2\int d\beta_L
\le C\nu(\{\varrho\ge L^2/4\})\longrightarrow0.
\end{equation}
The last limit follows from finiteness of $\nu$.

Define $d\mu=k\,d\nu$. Both $\nu=\nu_x$ and $\mu=\mu_x$ depend
on the fixed evaluation point $x$. Monotone convergence in
\eqref{eq:app-mass}--\eqref{eq:app-moment}, together with
\eqref{eq:app-boundary}, gives
\begin{equation}\label{eq:app-probability}
\mu(\C^n)=1,\qquad
\int\varrho\,d\mu=\varrho(x)+\nu(\C^n)<\infty.
\end{equation}
By \eqref{eq:app-barrier-definition}, the cutoff defining $k$
vanishes when $S\ge2\ell$. Thus $d\mu=k\,d\nu$ is supported
in $\{S\le2\ell\}$.
On this set, the definition of $S$ in \eqref{eq:app-laplacian}
bounds every eigenvalue of $g$ below by $(2\ell)^{-1}$.
Their product is one by Theorem~\ref{thm:appendix-smooth}, so
\begin{equation}\label{eq:app-support}
(2\ell)^{-1}g_0\le g\le(2\ell)^{n-1}g_0
\quad\hbox{on }\supp\mu.
\end{equation}

We will use the limiting Green formula for a smooth real
function $\varphi$ satisfying $|\varphi|\le C\varrho$ and
$\Delta_g\varphi\ge0$. Its boundary integral tends to zero
by \eqref{eq:app-boundary}, and its $\mu$-integral is
absolutely convergent by \eqref{eq:app-probability}.
Domain monotonicity makes the nonnegative integrals of
$\Delta_g\varphi/S$ increase with $L$. Rearranging
\eqref{eq:app-green} shows that their limit is finite and gives
\begin{equation}\label{eq:app-green-limit}
\varphi(x)=\int\varphi\,d\mu
-\int\frac{\Delta_g\varphi}{S}\,d\nu.
\end{equation}

\subsection{Use translations and disc averages to bound the metric}
\label{sec:appendix-metric}

We now use the probability measure to compare the metric at
$x$ with its average over the controlled region. Translating
the potential preserves its determinant, so the matrix
arithmetic--geometric mean inequality produces an averaging
inequality. A uniform tail estimate then permits its
second-order limit without assuming a global derivative bound.

For a flat displacement $h\in\C^n$, put
\[
J(h)=\int u(y+h)\,d\mu(y).
\]
On bounded sets of $h$, the growth and plurisubharmonicity
hypotheses of Theorem~\ref{thm:appendix-smooth} give
$0<u(y+h)\le C\varrho(y)$ and
$\Delta_g[u(\,\cdot+h)]\ge0$. Formula
\eqref{eq:app-green-limit} therefore gives
\begin{equation}\label{eq:app-translation}
J(h)-u(x+h)=
\int\tr\bigl(g(y)^{-1}g(y+h)\bigr)\frac{d\nu(y)}{S(y)}.
\end{equation}
Both sides are finite; the preceding bound $u(y+h)\le C\varrho(y)$
and the finite moment in \eqref{eq:app-probability} make the
nonnegative right-hand side uniformly bounded for bounded $h$.
The determinant condition in Theorem~\ref{thm:appendix-smooth}
and the matrix arithmetic--geometric mean inequality give
$\tr(g(y)^{-1}g(y+h))\ge n$, with equality for $h=0$.
Subtracting \eqref{eq:app-translation} at $h=0$ from its value
at $h$ therefore gives
\begin{equation}\label{eq:app-minimum}
J(h)-u(x+h)\ge J(0)-u(x).
\end{equation}

We next justify the required second-order averaging.
On $\supp\mu$, \eqref{eq:app-support} and $S\le2\ell$
imply
\[
\tr\bigl(g_0^{-1}g(y+h)\bigr)
\le(2\ell)^n
\frac{\tr\bigl(g(y)^{-1}g(y+h)\bigr)}{S(y)}.
\]
Using $d\mu=k\,d\nu$, the bound $k\le2/(m\varrho)$
from \eqref{eq:app-barrier-definition}, and
\eqref{eq:app-translation}, we obtain
\begin{equation}\label{eq:app-tail}
\int_{\{\varrho>L^2\}}\tr\bigl(g_0^{-1}g(y+h)\bigr)\,d\mu(y)
\le\frac{C}{L^2},
\end{equation}
uniformly for bounded $h$. Here $C$ may depend on $n,m,M$, the
fixed point $x$, and the chosen bound for $|h|$, but is independent
of $L$ and of $h$ within that bound. The same argument with
$\varrho\ge1$ proves finiteness on the whole space.
On compact sets of $y$, smoothness gives continuity in $h$.
Together with \eqref{eq:app-tail}, this proves continuity of
$\int g(y+h)\,d\mu(y)$, entry by entry, since positivity
bounds the absolute value of every entry by the trace.

Fix a constant complex vector $v$. The planar Green
formula on the complex disc gives
\begin{equation}\label{eq:app-disc}
\frac1{2\pi}\int_0^{2\pi}u(y+re^{i\theta}v)\,d\theta-u(y)
=\frac2\pi\int_{|\zeta|<r}
\log\frac r{|\zeta|}\,g(y+\zeta v)[v]\,dA(\zeta).
\end{equation}
The scalar kernel on the right has total mass $r^2$.
Average \eqref{eq:app-minimum} over $h=re^{i\theta}v$
and use \eqref{eq:app-disc} at $y$ and at $x$.
Tonelli's theorem applies to the nonnegative disc
integrands. Their integrals are finite by
\eqref{eq:app-tail}. Consequently,
\begin{equation}\label{eq:app-averaged-disc}
0\le\frac2\pi\int_{|\zeta|<r}\log\frac r{|\zeta|}
\left(\int g(y+\zeta v)[v]\,d\mu(y)-g(x+\zeta v)[v]\right)
dA(\zeta).
\end{equation}
Divide \eqref{eq:app-averaged-disc} by $r^2$ and let $r\downarrow0$.
Using the continuity obtained from \eqref{eq:app-tail} and the
mass $r^2$ of the kernel in \eqref{eq:app-disc}, we obtain
\begin{equation}\label{eq:app-directional-average}
g(x)[v]\le\int g(y)[v]\,d\mu(y).
\end{equation}
Since $v$ is arbitrary, \eqref{eq:app-directional-average},
\eqref{eq:app-probability}, and \eqref{eq:app-support} imply
\begin{equation}\label{eq:app-metric-average}
g(x)\le\int g\,d\mu\le(2\ell)^{n-1}g_0.
\end{equation}
The point $x$ was arbitrary. Combining \eqref{eq:app-metric-average}
with the determinant condition in Theorem~\ref{thm:appendix-smooth}
gives a lower eigenvalue bound as well. Thus
\begin{equation}\label{eq:app-uniform}
C^{-1}g_0\le g\le Cg_0\qquad\hbox{on }\C^n.
\end{equation}
In particular, $g$ is complete. No completeness
assumption was used to construct $\mu$.

\subsection{Apply the Calabi estimate to make the metric constant}
\label{sec:appendix-calabi}

The aim here is to eliminate the derivatives of the metric.
The Calabi identities control a nonnegative third-derivative
quantity and the bounded trace of $g$. We divide that quantity
by $a-\tr(g_0^{-1}g)$, with
$a>1+\sup_{\C^n}\tr(g_0^{-1}g)$.
The resulting quotient satisfies a differential inequality with
a positive quadratic term. A cutoff on expanding Euclidean balls
then shows that the quotient vanishes.

Let $\nabla$ be the K\"ahler connection of $g$, and let
$B=\nabla-D$ be its holomorphic connection difference. Define
\begin{equation}\label{eq:app-calabi-quantities}
B^k_{ij}=\sum_\ell(g^{-1})_{\ell k}\,D_i g_{j\bar\ell},
\qquad E=|B|_g^2,\qquad T=\tr(g_0^{-1}g).
\end{equation}
The determinant equation in Theorem~\ref{thm:appendix-smooth}
gives $\operatorname{Ric}(g)=0$, and
\eqref{eq:app-uniform} bounds $T$.
The flat-background Calabi identities yield
\begin{equation}\label{eq:app-calabi}
\Delta_gE\ge0,
\qquad
\Delta_gT\ge cE,\qquad
|\partial T|_g^2\le CE,
\end{equation}
where $c>0$. For completeness, flatness of $D$ gives
$\bar\nabla_bB^k_{ij}=-R^k{}_{i\bar b j}$.
The contracted Bianchi identity expresses the contracted derivative
of this curvature as a derivative of the Ricci tensor; it vanishes
because the determinant equation in Theorem~\ref{thm:appendix-smooth}
gives $\operatorname{Ric}(g)=0$. Commuting contracted covariant
derivatives acts on each tensor index by this zero Ricci endomorphism.
Differentiating
$|B|_g^2$ leaves nonnegative covariant-derivative squares,
which gives the first inequality in
\eqref{eq:app-calabi}; see also \cite[Section 2.4]{TWY}.
Differentiating $\log\det g=0$ once in each complex direction
expresses $\Delta_gT$ as the nonnegative squared norm of
$Dg$, with its derivative index contracted by $g_0^{-1}$.
The norm $E$ in \eqref{eq:app-calabi-quantities} uses $g^{-1}$
in that index. Uniform equivalence in \eqref{eq:app-uniform}
gives the second inequality in \eqref{eq:app-calabi}.
Its third inequality follows from
$D_iT=\tr(g_0^{-1}D_i g)$, Cauchy--Schwarz, and
\eqref{eq:app-uniform}.

Choose $a>1+\sup T$, which is finite by \eqref{eq:app-uniform},
and put
\begin{equation}\label{eq:app-quotient-definition}
q=\frac{E}{a-T}.
\end{equation}
The denominator is bounded above and below by positive constants.
Applying the product rule to $E=(a-T)q$ and using
\eqref{eq:app-calabi} gives
\begin{equation}\label{eq:app-quotient}
\Delta_gq\ge c_1q^2+
\frac{2}{a-T}\operatorname{Re}\langle\partial T,\partial q\rangle_g,
\qquad
\frac{|\partial T|_g}{a-T}\le C\sqrt q
\end{equation}
for a constant $c_1>0$.

Fix $x\in\C^n$ and, on the Euclidean ball $B_L(x)$, set
$\phi=1-|z-x|^2/L^2$. Uniform equivalence in
\eqref{eq:app-uniform} gives
\begin{equation}\label{eq:app-cutoff}
|\partial\phi|_g^2\le C/L^2,\qquad
\Delta_g\phi\ge-C/L^2.
\end{equation}
The function $\phi^2q$ vanishes at the boundary.
At a positive interior maximum,
$\partial q=-2q\,\partial\phi/\phi$.
Using this gradient identity, the product rule,
\eqref{eq:app-quotient}, and \eqref{eq:app-cutoff}, we obtain
\begin{align}
0\ge\Delta_g(\phi^2q)
&\ge c_1\phi^2q^2
-\frac{4\phi q}{a-T}
 \operatorname{Re}\langle\partial T,\partial\phi\rangle_g
+2\phi q\,\Delta_g\phi-6q|\partial\phi|_g^2
\nonumber\\
&\ge\frac{c_1}{2}\phi^2q^2-\frac{C}{L^2}q.
\label{eq:app-calabi-maximum}
\end{align}
In the last step, the drift term has size at most
$C\phi q^{3/2}|\partial\phi|_g$.
Young's inequality absorbs half the positive quadratic
term, and \eqref{eq:app-cutoff} bounds the remainder.
Dividing \eqref{eq:app-calabi-maximum} by $q>0$ proves
$\max_{B_L(x)}\phi^2q\le C/L^2$.
If the maximum is zero, the same bound is immediate.
Since $\phi(x)=1$, we have $q(x)\le C/L^2$.
Letting $L\to\infty$ gives $q(x)=0$.
By \eqref{eq:app-quotient-definition} and
\eqref{eq:app-calabi-quantities}, $B=0$ everywhere, and hence $Dg=0$.

\subsection{Recover the quadratic potential}
\label{sec:appendix-potential}

A constant complex Hessian leaves a pluriharmonic part
undetermined. This final step uses quadratic growth to show
that this remaining part is also a polynomial of degree at
most two, completing the smooth Liouville theorem.

By Section~\ref{sec:appendix-calabi}, the matrix $g$ is constant.
Write $H=g$ and set $v=u-H[z]$.
Then $D^2_{\C}v=0$, so $v$ is pluriharmonic and
$\Delta_\R v=0$. The growth bounds in Theorem~\ref{thm:appendix-smooth} give
$|v(z)|\le C(1+|z|^2)$. For a fixed point $x$,
the harmonic interior derivative estimate gives
\[
|D^3_\R v(x)|
\le\frac{C_n}{L^3}\sup_{B_L(x)}|v|
\le\frac{C_x(1+L^2)}{L^3}\longrightarrow0.
\]
Consequently $D^3_\R v=0$ everywhere, and $v$, hence $u$,
is a real polynomial of degree at most two.
Writing its real quadratic part as
$\tfrac12\xi^TH_0\xi$, the growth bounds in
Theorem~\ref{thm:appendix-smooth}, applied along each real ray,
give $2m\Id_{2n}\le H_0\le2M\Id_{2n}$.
This proves Theorem~\ref{thm:appendix-smooth}.

\section{A smooth counterexample under quadratic upper growth}
\label{sec:smooth-counterexamples}

Li and Wu \cite[Remark~1.2]{LiWu} prove that a smooth entire solution
of $\sigma_2(D^2_{\R}v)=1$ with $|v(x)|\le C(1+|x|^2)$ is
a quadratic polynomial.
We construct a smooth positive counterexample to the corresponding
statement for $\det D^2_{\C}v=1$ on $\C^2$.
Its extension to the complex $\sigma_k$ equation is explained
in Remark~\ref{rem:smooth-counterexample}.
Unlike \eqref{eq:upper-only-example}, the example below is smooth
on all of $\C^2$.

\begin{proposition}[Smooth counterexample]
\label{prop:smooth-counterexample}
There exists a nonquadratic strictly plurisubharmonic function
$v\in C^\infty(\C^2)$ satisfying
\begin{equation}\label{eq:smooth-counterexample-properties}
\det D^2_{\C}v=1,
\qquad 0<v(z)\le8(1+|z|^2).
\end{equation}
Its complex Hessian is unbounded.
\end{proposition}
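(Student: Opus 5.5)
We plan to build $v$ from a symmetry reduction that turns $\det D^2_{\C}v=1$ into a two-variable equation, and then make that equation tractable with a partial Legendre transform. Concretely, we look for $v(z_1,z_2)=F(r,y)$ with $r=|z_1|^2$ and $y=\operatorname{Im}z_2$, so that $v$ is invariant under rotations of $z_1$ and under real translations of $z_2$.

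\emph{Step 1 (reduction).} With the conventions of Section~\ref{sec:main-results}, a direct computation gives
\[
v_{1\bar1}=(rF_r)_r,\qquad v_{1\bar2}=\tfrac{i}{2}\,\bar z_1F_{ry},\qquad v_{2\bar2}=\tfrac14F_{yy}.
\]
Hence the equation becomes
\[
(rF_r)_rF_{yy}-rF_{ry}^2=4.
\]
The product ansatz $F=A(y)r+B(y)$ shows why more freedom is needed. It forces $AA''=A'^2$, so $A=e^{ky}$, and then $B=4k^{-2}e^{-ky}$. This solution is nonquadratic but grows exponentially. We therefore allow genuinely mixed dependence on $r$ and $y$.

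\emph{Step 2 (linearization and choice of solution).} In the variable $t=\log r$ the reduced equation is a real Monge--Amp\`ere equation in $(t,y)$ with density proportional to $e^{t}$. A partial Legendre transform in $y$ turns it into a linear equation. In that linear equation we choose an explicit solution, or a superposition of separated solutions, subject to four requirements:
\begin{itemize}
\item $F$ extends smoothly to $r=0$, i.e.\ $F$ is a smooth function of $r$ up to $r=0$ with $(rF_r)_r>0$ there;
\item $F_{yy}>0$;
\item $F>0$;
\item $F(r,y)\le8(1+r+y^2)$.
\end{itemize}
Since $|z|^2\ge r+y^2$, the last requirement gives the bound in \eqref{eq:smooth-counterexample-properties}. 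The growth requirement is the reason for the truncation: we aim for $F$ comparable to $r+y^2$ on the whole range, while the mixed term $rF_{ry}^2$ makes the complex Hessian entry $v_{1\bar2}$ unbounded as $|z_1|\to\infty$. The quadratic upper growth does not force boundedness of this entry, because the product $(rF_r)_r\,F_{yy}$ is large only where $v_{2\bar 2}$ or $v_{1\bar1}$ compensates.

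\emph{Step 3 (verification).} We check the following.
\begin{itemize}
\item Strict plurisubharmonicity: $(rF_r)_r>0$ and $\det=1$.
\item Smoothness across $z_1=0$, via evenness and analyticity in $r$.
\item Positivity and the bound $v\le8(1+|z|^2)$, from elementary inequalities on the explicit formula.
\item Unboundedness of $D^2_{\C}v$, say of $v_{1\bar2}$ along a ray. This automatically shows that $v$ is not quadratic.
\end{itemize}

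The main obstacle is Step 2. We need an explicit solution of the transformed linear equation that is simultaneously smooth at the axis $z_1=0$, globally convex in the required sense, and of at most quadratic growth, yet has nonconstant Hessian. We would first try a transform-side solution built from $\cosh$- or $\log\cosh$-type profiles in $y$. These saturate linear growth of $F_y$, which keeps $F\lesssim y^2$, and we would then tune constants to obtain the factor $8$.
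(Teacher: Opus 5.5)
Your Step~1 reduction is correct, and the ansatz is viable. The paper's example depends only on $\operatorname{Re}z_1$ and $|z_2|^2$, so after the unitary substitution $(w_1,w_2)\mapsto(-iw_2,w_1)$ it has exactly your form $F(|w_1|^2,\operatorname{Im}w_2)$. The linearization is also right: $G(z_1,p)=\sup_y\bigl(py-F(|z_1|^2,y)\bigr)$ satisfies $G_{1\bar1}+4G_{pp}=0$, which after rescaling $p$ is the flat Laplace equation on $\R^3$, and your $e^t$-weighted equation is its axisymmetric form. But the proposal stops where the proof has to begin. Step~2 never produces a function, so nothing in Step~3 is verified: not the determinant, not smoothness on $\{z_1=0\}$, not the bound by $8(1+|z|^2)$, not unboundedness. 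The existence statement is therefore not proved.

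More seriously, the aim you use to steer Step~2 points the wrong way. Your four listed requirements are consistent, since the paper's example meets them. The guiding target, however, is $F$ comparable to $r+y^2$ with $F_y$ of linear growth, and this is incompatible with nonquadraticity. Suppose $y\mapsto F_y(r,y)$ maps $\R$ onto $\R$ for every $r\ge0$; comparability to $r+y^2$ forces this. Then $G$ is defined and harmonic on all of $\R^3$, and the upper bound $F\le C(1+r+y^2)$ gives $G\ge p^2/(4C)-C(1+|z_1|^2)$. The argument of Section~\ref{sec:harmonic-remainder} then makes $G$ a polynomial of degree at most two, and inverting the transform makes $F$ quadratic. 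So a nonquadratic example of your type needs the image of $(z_1,y)\mapsto(z_1,F_y)$ to be a proper subset of $\C\times\R$, with the transformed solution singular on the complement. This is the missing idea: $F(0,y)$ must grow at most linearly, and in the paper it stays bounded, as $y\to-\infty$.

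The paper's explicit formula realizes this, in its own coordinates. With $\log s+s-q/s=2x$, $t=q/s$ and $v=s+t+\tfrac12(s^2+t^2)$, on $\{z_2=0\}$ one has $v_x=2s\in(0,\infty)$ and $v\to0$ as $x\to-\infty$. Hence the Legendre image omits a half-line, and the transformed function has a logarithmic singularity along it. The paper then verifies $\det D^2_\C v=(1+s+t)/K=1$ directly and derives $v\le2+6q+8x^2$ from the implicit relation. It obtains unboundedness from the diagonal entry $1+1/s$ on $\{z_2=0\}$. Your heuristic that the mixed entry blows up at infinity in the rotated variable does not match this mechanism either: there $|v_{1\bar2}|=\sqrt{t/s}/K\to0$ as $|z_2|\to\infty$.
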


\begin{proof}
\emph{Construction and smoothness.}
Write $x=\operatorname{Re}z_1$ and
$q=|z_2|^2$, and define $s>0$ and $t\ge0$ by
\begin{equation}\label{eq:smooth-implicit}
\log s+s-\frac qs=2x,
\qquad t=\frac qs.
\end{equation}
For fixed $q\ge0$, the left-hand side of
\eqref{eq:smooth-implicit} increases from $-\infty$ to $+\infty$,
and its derivative with respect to $s$ is
\begin{equation}\label{eq:smooth-implicit-monotonicity}
\frac1s+1+\frac q{s^2}=\frac{1+s+t}{s}>0.
\end{equation}
The monotonicity and endpoint limits above give a unique positive
solution of \eqref{eq:smooth-implicit}. By
\eqref{eq:smooth-implicit-monotonicity}, the implicit function theorem
makes $s$ smooth in $(x,q)$, also at $q=0$ after extending $q$
locally to real values. Define
\begin{equation}\label{eq:smooth-potential}
v=s+t+\frac12(s^2+t^2),
\qquad K=1+s+t.
\end{equation}
Since $s>0$ and $t=q/s\ge0$ by \eqref{eq:smooth-implicit},
the function $v$ in \eqref{eq:smooth-potential} is positive and smooth
on $\C^2$, including $\{z_2=0\}$.

\emph{The complex Monge--Amp\`ere equation.}
Differentiating \eqref{eq:smooth-implicit} gives
\begin{equation}\label{eq:smooth-parameter-derivatives}
s_x=\frac{2s}{K},\qquad s_q=\frac1K,
\qquad t_x=-\frac{2t}{K},\qquad
t_q=\frac{1+s}{sK}.
\end{equation}
Using \eqref{eq:smooth-parameter-derivatives} in
\eqref{eq:smooth-potential}, we obtain
\begin{equation}\label{eq:smooth-first-derivatives}
v_x=2(s-t),\qquad v_q=1+\frac1s.
\end{equation}
Differentiating \eqref{eq:smooth-first-derivatives} and using
\eqref{eq:smooth-parameter-derivatives} once more gives
\begin{equation}\label{eq:smooth-second-derivatives}
v_{xx}=\frac{4(s+t)}K,\qquad
v_{xq}=-\frac2{sK},\qquad v_{qq}=-\frac1{s^2K}.
\end{equation}
By \eqref{eq:smooth-implicit}--\eqref{eq:smooth-potential},
$v=v(x,q)$ with $x=\operatorname{Re}z_1$ and $q=|z_2|^2$. Thus
\eqref{eq:smooth-first-derivatives}--\eqref{eq:smooth-second-derivatives} give
\begin{equation}\label{eq:smooth-complex-hessian}
D^2_{\C}v
=\begin{pmatrix}
\tfrac14v_{xx}&\tfrac12z_2v_{xq}\\[2pt]
\tfrac12\bar z_2v_{xq}&v_q+qv_{qq}
\end{pmatrix}
=\begin{pmatrix}
\dfrac{s+t}{K}&-\dfrac{z_2}{sK}\\[6pt]
-\dfrac{\bar z_2}{sK}&1+\dfrac{1+s}{sK}
\end{pmatrix}.
\end{equation}
By \eqref{eq:smooth-implicit}--\eqref{eq:smooth-potential},
$q=st$ and $(s+t)(1+s)-t=sK$.
Therefore \eqref{eq:smooth-complex-hessian} yields
\begin{equation}\label{eq:smooth-determinant}
\det D^2_{\C}v
=\frac{s+t}{K}
+\frac{(s+t)(1+s)-t}{sK^2}
=\frac{s+t+1}{K}=1.
\end{equation}
The first diagonal entry in \eqref{eq:smooth-complex-hessian}
is positive. Together with \eqref{eq:smooth-determinant},
this proves that $D^2_{\C}v$ is positive definite.

\emph{Quadratic upper growth.}
When $s\ge1$, \eqref{eq:smooth-implicit} gives
\[
s^2=q+2xs-s\log s
\le q+2|x|s\le q+2x^2+\tfrac12s^2.
\]
Combining this with $s^2<1$ when $s<1$, we obtain
\begin{equation}\label{eq:smooth-s-bound}
s^2\le1+2q+4x^2.
\end{equation}
Since $\log s\le s$, \eqref{eq:smooth-implicit} gives
$t\le2s+2|x|$ and $st=q$. Hence
\begin{equation}\label{eq:smooth-t-bound}
t^2\le2q+2|x|t\le2q+2x^2+\tfrac12t^2,
\qquad\text{hence}\qquad t^2\le4q+4x^2.
\end{equation}
The bounds \eqref{eq:smooth-s-bound}--\eqref{eq:smooth-t-bound} and
$s+t\le1+\tfrac12(s^2+t^2)$ show from
\eqref{eq:smooth-potential} that
\begin{equation}\label{eq:smooth-growth-bound}
0<v\le1+s^2+t^2\le2+6q+8x^2
\le8(1+|z|^2).
\end{equation}

\emph{Nonquadraticity.}
On $\{z_2=0\}$, \eqref{eq:smooth-implicit} and
\eqref{eq:smooth-complex-hessian} become
\begin{equation}\label{eq:smooth-axis-hessian}
\log s+s=2x,\qquad
D^2_{\C}v=
\begin{pmatrix}
\dfrac{s}{1+s}&0\\[6pt]
0&1+\dfrac1s
\end{pmatrix}.
\end{equation}
As $x\to-\infty$, the first relation in
\eqref{eq:smooth-axis-hessian} forces $s\to0$.
The second relation then gives $v_{2\bar2}\to+\infty$.
Thus $v$ has an unbounded complex Hessian and is not quadratic.
\end{proof}

By \eqref{eq:smooth-implicit}--\eqref{eq:smooth-potential}, $v$ is
independent of $\operatorname{Im}z_1$. It therefore fails the lower
bound \eqref{eq:growth}, whose left-hand side tends to infinity
along that direction for every $\alpha\in[0,1]$.
It is also not real convex. Indeed, writing $a=\operatorname{Re}z_2$
and taking $z_2$ real, use $q=st$ from \eqref{eq:smooth-implicit}
in \eqref{eq:smooth-first-derivatives}--\eqref{eq:smooth-second-derivatives}
to obtain
\[
v_{aa}=2v_q+4qv_{qq}
=2+\frac2s-\frac{4t}{sK}
=-\frac2{13}\quad\text{when }s=\frac12,\ t=5.
\]
Such a point exists by \eqref{eq:smooth-implicit}, with
$z_2=\sqrt{5/2}$ and $x=(\log(1/2)+1/2-5)/2$.
Thus this example does not conflict with rigidity results
that assume real convexity.

\subsection{Verification of the continuous example}
\label{sec:continuous-counterexample-check}

We verify the Bedford--Taylor equation asserted in
Remark~\ref{rem:growth-assumptions}. Each summand in
\eqref{eq:upper-only-example} is the modulus of a holomorphic
function, so $w$ is plurisubharmonic. On
$\{z_1z_2z_3\ne0\}$, put $\kappa=16^{1/3}$,
$r_j=|z_j|$, and
$U=\operatorname{diag}(z_1/r_1,z_2/r_2,z_3/r_3)$.
Direct differentiation of \eqref{eq:upper-only-example} gives
\begin{equation}\label{eq:continuous-example-hessian}
D^2_\C w=\frac\kappa4U^*NU,\qquad
N=\begin{pmatrix}
(r_2+r_3)/r_1&1&1\\
1&(r_1+r_3)/r_2&1\\
1&1&(r_1+r_2)/r_3
\end{pmatrix}.
\end{equation}
If $d_1,d_2,d_3$ are the diagonal entries of $N$ in
\eqref{eq:continuous-example-hessian}, expansion of its determinant gives
\[
\det N=d_1d_2d_3-d_1-d_2-d_3+2=4.
\]
Since $U$ is unitary and $\kappa^3=16$, this identity and
\eqref{eq:continuous-example-hessian} give
$\det D^2_\C w=16\cdot4/4^3=1$ away from the coordinate hyperplanes.
The function $w$ is locally bounded, so its Bedford--Taylor measure
gives no mass to those analytic hyperplanes \cite{BT}.
The reference volume measure also gives them no mass. Hence
$(\sqrt{-1}\partial\overline\partial w)^3=\omega_0^3$
on all of $\C^3$.

\section*{Acknowledgements}
This research was assisted by ChatGPT.

\end{document}